\newcommand{\T}{\mathbb{T}}
\newcolumntype{C}[1]{>{\centering\arraybackslash}b{#1}}
\newcolumntype{R}[1]{>{\raggedleft\arraybackslash}b{#1}}
\newcolumntype{L}[1]{>{\raggedright\arraybackslash}b{#1}}
\newcolumntype{M}[1]{>{\centering}m{#1}}
\newtheorem{theo}{Theorem}[section]
\newtheorem{lem}{Lemma}[section]
\newtheorem{prop}{Proposition}[section]
\newtheorem{remark}{Remark}[section]
\numberwithin{equation}{section}
\pgfplotsset{compat=newest}
\pgfplotsset{compat=1.12}
	\def\MR#1{}
\date{}
\title{Dynamics of vortex cap solutions on the rotating unit sphere}
\author{Claudia Garc\'{i}a, Zineb Hassainia and Emeric Roulley}
\begin{document}

	\maketitle
	\begin{abstract}
		In this work, we analytically study the existence of periodic vortex cap solutions for the homogeneous and incompressible Euler equations on the rotating unit $2$-sphere, which was numerically conjectured in \cite{DP92,DP93,KSS18, KS21}. Such solutions are piecewise constant vorticity distributions, subject to the Gauss constraint and rotating uniformly around the vertical axis. The proof is based on the bifurcation from zonal solutions given by spherical caps. For the one--interface case, the bifurcation eigenvalues correspond to Burbea's frequencies obtained in the planar case but shifted by the rotation speed of the sphere. The two--interfaces case (also called band type or strip type solutions) is more delicate. Though,  for any fixed large enough symmetry, and under some non-degeneracy conditions to avoid spectral collisions, we achieve the existence of at most two branches of bifurcation. 
		
	\end{abstract}
	\tableofcontents
	
	\section{Introduction}
	The purpose of this paper is to give an analytical proof of  the emergence of uniformly rotating (around the $z$-axis) vortex caps with $\mathbf{m}$-fold symmetries ($\mathbf{m}\in\mathbb{N}^*$) for the incompressible Euler equations set on the rotating unit sphere $\mathbb{S}^2$ defined by
	$$\mathbb{S}^2\triangleq\Big\{(x,y,z)\in\mathbb{R}^3\quad\textnormal{s.t.}\quad x^2+y^2+z^2=1\Big\}.$$
	In particular, we shall implement bifurcation techniques in order to find non-trivial vortex cap solutions close to the trivial stationary flat (spherical) ones, which was numerically conjectured in \cite{DP92, DP93, KS21, KSS18}. In this introduction we present the model of interest, discuss some historical background, derive the contour dynamics equations, expose our results and give the organization of this work.
	\subsection{Euler equations on the rotating unit sphere}
	This study deals with the homogeneous incompressible Euler equations on the two dimensional unit sphere in rotation around the vertical axis. Such a model, sometimes called \textit{barotropic}, is commonly used in geophysical fluid dynamics for meteorological predictions or to study the motion of planets' atmosphere. In order to present the model, we shall first recall some basic notions in differential calculus/geometry. The set $\mathbb{S}^2$ is endowed with a smooth manifold structure described by the following two charts
	\begin{align*}
		&\begin{array}[t]{rcl}
			C_1:(0,\pi)\times(0,2\pi) & \rightarrow & \mathbb{R}^3\\
			(\theta,\varphi) & \mapsto & \big(\sin(\theta)\cos(\varphi)\,,\,\sin(\theta)\sin(\varphi)\,,\,\cos(\theta)\big),
		\end{array}\\
		&\begin{array}[t]{rcl}
			C_2:(0,\pi)\times(0,2\pi) & \rightarrow & \mathbb{R}^3\\
			(\vartheta,\phi) & \mapsto & \big(-\sin(\vartheta)\cos(\phi)\,,\,-\cos(\vartheta)\,,\,-\sin(\vartheta)\sin(\phi)\big).
		\end{array}
	\end{align*}
	For our purpose, we shall mainly work in the chart $C_1$ where the variables $\theta$ and $\varphi$ are called \textit{colatitude} and \textit{longitude}, respectively. Notice that the physical literature mentioned above rather considers the latitude/longitude convention, but we found more convenient to work with the other one.
	In the colatitude/longitude chart $C_1$, the Riemannian metric of $\mathbb{S}^2$ is given by
	\begin{equation}\label{metric S2}
		\mathtt{g}_{\mathbb{S}^2}(\theta,\varphi)\triangleq d\theta^2+\sin^2(\theta)d\varphi^2.
	\end{equation}
	Therefore, denoting $N$ and $S$ the north and south poles, we have that for any $p\in\mathbb{S}^2\setminus\{N,S\},$ an orthonormal basis of the tangent space $T_{p}\mathbb{S}^2$ is given by
	$$\mathbf{e}_{\theta}\triangleq\partial_{\theta},\qquad\mathbf{e}_{\varphi}\triangleq\frac{1}{\sin(\theta)}\partial_{\varphi}.$$
	We have used the classical identification between tangent vectors and directional differentiations. In these coordinates, the Riemannian volume is given by
	$$d\sigma=\sin(\theta
	)d\theta d\varphi.$$
	Therefore, for any function $\mathtt{f}:\mathbb{S}^2\rightarrow\mathbb{R},$ we define 
	\begin{equation}\label{conv int S2}
		f(\theta,\varphi)\triangleq\mathtt{f}\big(C_1(\theta,\varphi)\big),\qquad\int_{\mathbb{S}^2}\mathtt{f}(\xi)d\sigma(\xi)\triangleq\int_{0}^{2\pi}\int_{0}^{\pi}f(\theta,\varphi)\sin(\theta)d\theta d\varphi.
	\end{equation}
	In the sequel, with a small abuse of notation, we shall denote $f$  for both $\mathtt{f}$ or $f$ with no possible confusion according to the context, since one is in the cartesian variables $\xi$ and the other one is in the spherical coordinates $(\theta,\varphi).$ The gradient of $f$ is defined as follows
	$$\nabla f(\theta,\varphi)\triangleq\partial_{\theta}f(\theta,\varphi)\mathbf{e}_{\theta}+\frac{\partial_{\varphi}f(\theta,\varphi)}{\sin(\theta)}\mathbf{e}_{\varphi}.$$
	Similarly, we define its orthogonal as
	$$\nabla^{\perp}f(\theta,\varphi)\triangleq J\nabla f(\theta,\varphi),\qquad\underset{(\mathbf{e}_{\theta},\mathbf{e}_{\varphi})}{\textnormal{Mat}}(J)=\begin{pmatrix}
		0 & 1\\
		-1 & 0
	\end{pmatrix}.$$
	The Laplace-Beltrami operator applied to $f$ is defined by
	$$\Delta f(\theta,\varphi)\triangleq\frac{1}{\sin(\theta)}\partial_{\theta}\big[\sin(\theta)\partial_{\theta}f(\theta,\varphi)\big]+\frac{1}{\sin^2(\theta)}\partial_{\varphi}^2f(\theta,\varphi).$$
	For a vector field $U(\theta,\varphi)=U_{\theta}(\theta,\varphi)\mathbf{e}_{\theta}+U_{\varphi}(\theta,\varphi)\mathbf{e}_{\varphi},$ the divergence expresses as
	$$(\nabla\cdot U)(\theta,\varphi)\triangleq\frac{1}{\sin(\theta)}\partial_{\theta}\big[\sin(\theta)U_{\theta}(\theta,\varphi)\big]+\frac{1}{\sin(\theta)}\partial_{\varphi}U_{\varphi}(\theta,\varphi).$$
	The incompressible Euler equations on the rotating sphere $\mathbb{S}^2$ with angular rotation speed $\widetilde{\gamma}$ are given by
	\begin{equation}\label{Euler eq on S2}
		(E_{\widetilde{\gamma}})\begin{cases}
			\partial_{t}\Omega(t,\theta,\varphi)+U(t,\theta,\varphi)\cdot\nabla\big(\Omega(t,\theta,\varphi)-2\widetilde{\gamma}\cos(\theta)\big)=0,\\
			U(t,\theta,\varphi)=\nabla^{\perp}\Psi(t,\theta,\varphi),\\
			\Delta\Psi(t,\theta,\varphi)=\Omega(t,\theta,\varphi).
		\end{cases}
	\end{equation}
	The reader is referred to \cite[Sec. 13.4.1]{HH13} and \cite{S17} (see also \cite{T16}) for a rather complete introduction to these equations. We mention that the term $-2\widetilde{\gamma}\,U(t,\theta,\varphi)\cdot\nabla\cos(\theta)$ corresponds to the Coriolis force coming from the rotation of the sphere. Additionally, the equations \eqref{Euler eq on S2} must be completed by the physical impermeability condition
	 $$\forall\varphi\in[0,2\pi],\quad U_{\theta}(0,\varphi)=0=U_{\theta}(\pi,\varphi).$$
	 In the sequel, we shall work with the following quantity called \textit{absolute vorticity} 
	\begin{equation}\label{def abs vort}
		\overline{\Omega}(t,\theta,\varphi)\triangleq\Omega(t,\theta,\varphi)-2\widetilde{\gamma}\cos(\theta).
	\end{equation}
	The second equation in \eqref{Euler eq on S2} states that the velocity field $U$ is divergence-free. Then, the divergence theorem implies the following so-called \textit{Gauss constraint}
	\begin{equation}\label{null avrg vort}
		\forall t\geqslant0,\quad\int_{\mathbb{S}^2}\overline{\Omega}(t,\xi)d\sigma(\xi)=\int_{\mathbb{S}^2}\Omega(t,\xi)d\sigma(\xi)=0.
	\end{equation}
	Notice that the first equality above is justified by
	$$\int_{\mathbb{S}^2}\big[\overline{\Omega}(t,\xi)-\Omega(t,\xi)\big]d\sigma(\xi)=-4\pi\widetilde{\gamma}\int_{0}^{\pi}\cos(\theta)\sin(\theta)d\theta=\big[\pi\widetilde{\gamma}\cos(2\theta)\big]_0^{\pi}=0.$$
	According to \cite{BD15}, the stream function $\Psi$ can be computed from the vorticity $\Omega$ through the following integral representation
	\begin{equation}\label{psi-G}
		\Psi(t,\xi)=\int_{\mathbb{S}^2}G(\xi,\xi')\Omega(t,\xi')d\sigma(\xi'),\qquad G(\xi,\xi')\triangleq\frac{1}{2\pi}\log\left(\frac{|\xi-\xi'|_{\mathbb{R}^3}}{2}\right),
	\end{equation}
	where
	$|\cdot|_{\mathbb{R}^3}$ is the usual Euclidean norm in $\mathbb{R}^3.$ In the colatitude/longitude coordinates, we have
	\begin{equation}\label{Green sph}
		G(\theta,\varphi,\theta',\varphi')=\frac{1}{4\pi}\log\Big(1-\cos(\theta)\cos(\theta')-\sin(\theta)\sin(\theta')\cos(\varphi-\varphi')\Big)-\frac{\log(2)}{4\pi}\cdot
	\end{equation}
	In what follows, we shall denote for simplicity
	\begin{equation}\label{def D}
		D(\theta,\theta',\varphi,\varphi')\triangleq 1-\cos(\theta)\cos(\theta')-\sin(\theta)\sin(\theta')\cos(\varphi-\varphi').
	\end{equation}
	Observe that we can write
	\begin{align}
		D(\theta,\theta',\varphi,\varphi')&=1-\cos(\theta-\theta')+\sin(\theta)\sin(\theta')\big(1-\cos(\varphi-\varphi')\big)\nonumber\\
		&=2\Big[\sin^2\left(\tfrac{\theta-\theta'}{2}\right)+\sin(\theta)\sin(\theta')\sin^2\left(\tfrac{\varphi-\varphi'}{2}\right)\Big].\label{expr D}
	\end{align}
	The above function $D$ will play an important role since it describes the singularity of the integral operator defining the stream function. Indeed, with this last expression, we recover that $$D(\theta,\theta',\varphi,\varphi')\geqslant0\qquad \textnormal{and}\qquad D(\theta,\theta',\varphi,\varphi')=0\quad\Leftrightarrow\quad\theta=\theta'\textnormal{ and }\big(\varphi=\varphi' \textnormal{ or } \theta\in\{0,\pi\}\textnormal{ or }\theta'\in\{0,\pi\}\big).$$
	Notice that once one works with the colatitude/longitude coordinates instead of the physical ones, the Euclidean norm is deformed implying that the Green kernel is anisotropic and the north and south poles are degenerating points. This will generate extra complexity later when dealing with the regularity of the stream function in the new coordinates.
	\pgfdeclarelayer{foreground}
	\pgfsetlayers{main,foreground}
	\begin{figure}[!h]
		\begin{center}
			\tdplotsetmaincoords{80}{100}
			\begin{tikzpicture}[tdplot_main_coords,scale=2.37]
				\draw[thick,->] (0,0,0) -- (3,0,0) node[anchor=north east]{$x$};
				\draw[thick,->] (0,0,0) -- (0,1.2,0) node[anchor=north west]{$y$};
				\draw[thick,->] (0,0,0) -- (0,0,1.2) node[anchor=south]{$z$};
				
				\def\h{1.1}
				\coordinate (O) at (0,0,0);
				\coordinate (z) at (0,0,\h);
				\coordinate (x) at (\h,0,0);
				\coordinate (y) at (0,\h,0);
				\coordinate (a) at (0.6,0.6,0.53);
				\coordinate (b) at (0.6,0.6,0);
				
				\shade[tdplot_screen_coords, ball color = gray!40, opacity=0.4] (0,0) circle (1);
				\node at (0.5,-0.5,0.6)[above] {$\mathbb{S}^2$};
				\draw[gray!100,dashed] (0,0) ellipse(1cm and 0.4cm);

				\begin{pgfonlayer}{foreground}
					\draw[->] (z)+(-40:0.25) arc (-40:170:0.25);
				\end{pgfonlayer}
				\node at (0,0.3,1.15) {$\widetilde{\gamma}$};
				
				\draw (0.7,0.75,0.7) coordinate (a) node[left] {$C_1(\theta,\varphi)$};
				
				\draw[black,dashed] (0,0,0)--(0.6,0.6,0.53);
				\draw[black,dashed] (0.6,0.6,0)--(0.6,0.6,0.53);
				\draw[black,dashed] (0,0,0)--(0.6,0.6,0);
				\draw (0.6,0.6,0.53) node {$\bullet$};
				\draw[->,black,thick] (0.6,0.6,0.53)--(0.8,0.75,0.4);
				\node at (0.8,0.75,0.4) [right] {$\mathbf{e}_\theta$};
				\draw[->,black,thick] (0.6,0.6,0.53)--(0.5,0.7,0.58);
				\node at (0.5,0.7,0.58) [right] {$\mathbf{e}_\varphi$};
				\node at (0.8,0.2,0) {$\varphi$};
				\node at (0,0.1,0.25) {$\theta$};
				
				\draw[->,>=stealth] (0.5,0,0) arc [start angle=0,end angle=90,x radius=0.25,y radius=0.24];
				\draw[->,>=stealth] (0,0,0.2) arc [start angle=0,end angle=45,x radius=-0.3,y radius=0.3];
			\end{tikzpicture}
			\caption{Convention colatitude/longitude for spherical coordinates.}
		\end{center}
	\end{figure}
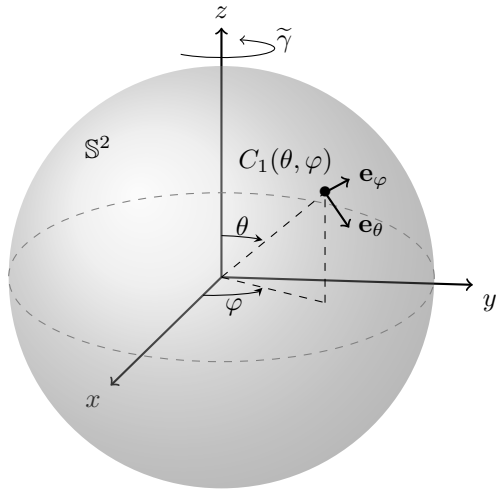
	
	\subsection{Historical discussion}
	We shall expose here some relevant results linked to our work.\\

	\textbf{Vortex patches  in the plane}

	\medskip\noindent
	Recall that the planar homogeneous incompressible Euler equations write
	\begin{equation}\label{Euler R2}
		\begin{cases}
			\partial_{t}\boldsymbol{\omega}(t,x_1,x_2)+\mathbf{v}(t,x_1,x_2)\cdot\nabla\boldsymbol{\omega}(t,x_1,x_2)=0,\\
			\mathbf{v}(t,x_1,x_2)=\nabla_{\mathbb{R}^2}^{\perp}\boldsymbol{\Psi}(t,x_1,x_2),\\
			\Delta_{\mathbb{R}^2}\boldsymbol{\Psi}(t,x_1,x_2)=\boldsymbol{\omega}(t,x_1,x_2),
		\end{cases}\qquad\nabla_{\mathbb{R}^2}^{\perp}\triangleq\begin{pmatrix}
			-\partial_{x_2}\\
			\partial_{x_1}
		\end{pmatrix},\qquad\Delta_{\mathbb{R}^2}\triangleq\partial_{x_1}^2+\partial_{x_2}^2.
	\end{equation}
	Vortex patches are weak solutions to \eqref{Euler R2} in the Yudovich class and taking the form $t\mapsto\mathbf{1}_{D_t}$ where $D_t$ is a bounded planar domain. The vorticity jump here is normalized to 1. The dynamics is described by the evolution of the boundary $\partial D_t$. More precisely, according to \cite[p. 174]{HMV13}, if $z(t,\cdot):\mathbb{T}\rightarrow\partial D_t$ is a parametrization of (one connected component of) the boundary at time $t$, then it must solve the following contour dynamics equation
	\begin{equation}\label{planar votex patch eq}
		\textnormal{Im}\Big(\partial_{t}z(t,x)\overline{\partial_x z(t,x)}\Big)=\partial_{x}\Big(\boldsymbol{\Psi}\big(t,z(t,x)\big)\Big).
	\end{equation}
	The solutions that we construct in this study are the analogous  to the V-states obtained in the planar case. These later are particular vortex patches where the dynamics is given by a uniform rotation of the initial domain around its center of mass (fixed to be the origin), namely $D_t=e^{\mathrm{i}\Omega t}D_0$ with $\Omega\in\mathbb{R}.$ As remarked by Rankine in 1858, any radial domain is a V-state rotating with any angular velocity $\Omega\in\mathbb{R}.$ Later, Kirchhoff \cite{K74} discovered non-trivial explicit  examples of V-states with elliptic shapes. Close to the unit disc, such solutions were first obtained numerically by Deem and Zabusky \cite{DZ78} and then analytically by Burbea \cite{B82} through bifurcation techniques. He constructed a countable family of local curves of V-states with $\mathbf{m}$-fold symmetries (i.e. invariance by $\tfrac{2\pi}{\mathbf{m}}$ angular rotation) bifurcating from the disc  at the angular velocities 
	\begin{equation}\label{Burbea}
		\Omega_{\mathbf{m}}=\frac{\mathbf{m}-1}{2\mathbf{m}}\cdot
	\end{equation}
	The global continuation of the Burbea's bifurcation branches was found in \cite{HMW20} through global bifurcation arguments. Some rigidity results giving necessary conditions on the angular velocity to obtain non-trivial V-states can be found in \cite{F00,H15,GSPSY-rigidity}.
	In the last decade, there have been intensive rigorous studies on the subject, giving more properties of the V-states and exploring their existence around  different
	topological structures. In particular,  the bifurcation from the annulus 
	$$A_b\triangleq\big\{z\in\mathbb{C}\quad\textnormal{s.t.}\quad b<|z|<1\big\}, \quad b\in(0,1),$$
	has been studied by Hmidi-de la Hoz-Mateu-Verdera \cite{HHMV16}. They proved that under the condition
	$$\mathtt{p}(b,\mathbf{m})\triangleq1+b^{\mathbf{m}}-\frac{1-b^2}{2}\mathbf{m}<0,$$
	there are exactly  two branches of periodic vortex patches, with two interfaces, emerging at the angular velocities 
	\begin{align}\label{Omega DC}
		\Omega_{\mathbf{m}}^{\pm}(b)\triangleq\frac{1-b^{2}}{4}\pm\frac{1}{2\mathbf{m}}\sqrt{\left(\frac{1-b^2}{2}\mathbf{m}-1\right)^2-b^{2\mathbf{m}}}.
	\end{align}
	The degenerate case $\mathtt{p}(b,\mathbf{m})=0$ has been discussed in \cite{HM16,WXZ22}.  
	Periodic multiple patches were found via a desingularization of an appropriate distribution of point vortices. The first work studying the desingularization of two point vortices using contour dynamics equations is due to Hmidi-Mateu \cite{HM17}. Other extensions, in the same spirit, have been achieved in \cite{G20,G21,HH21,HW22}. Similar results were obtained  using variational arguments in \cite{CLZ21,CLW14,CW22,T85} or via gluing techniques in \cite{DPMW20,DPMW22}.  Recently, the authors in \cite{GH22} studied the global continuation of the desingularization of the vortex pairs.
	The  existence of  a non-uniform rotating vorticity distribution has been treated in \cite{CCGS19,GHS20,GHM23}. In \cite{GSPS:2021}, G\'omez-Serrano, Park and Shi have constructed stationary configurations of multi-layered patches with finite kinetic energy using Nash-Moser techniques. Some additional references can be found in \cite{CCG16,HHHM15,HM16-1,HMV13,HMV15}.
	
	Many of  the results, mentioned above, apply not only to the  planar Euler equations but also to other active scalar equations such as the generalized surface quasi-geostrophic equation or the quasi-geostrophic shallow-water equations, see \cite{ADPM21,CQZZ21,CQZZ22,CWWZ21,CCGS16,CCGS19,CCGS20,HHH16,DHR19,G20,G21,GH22,GHM22,GHM22-1,GHS20,GCGS20,HH15,HH21, HMW20,HW22,HM16-1,HM16,HM17,HMV15,HXX22,R22,R21}.
	
	We end this discussion by very recent new perspectives concerning the existence of quasi-periodic patches using the Nash-Moser scheme together with KAM theory, see \cite{BHM22,GSIP23,HHM21,HHR23,HR22,HR21,R22}.\\	
	
	\textbf{Around stationary solutions on the rotating unit sphere}
	
	\medskip\noindent
	Looking for stationary solutions to \eqref{Euler eq on S2} is equivalent to solving the following equation on the stream function
	\begin{equation}\label{Stationary eq}
		\nabla^{\perp}\Psi(\theta,\varphi)\cdot\nabla\Big(\Delta\Psi(\theta,\varphi)-2\widetilde{\gamma}\cos(\theta)\Big)=0.
	\end{equation}
	If $\Psi(\theta,\varphi)=\Psi(\theta)$ is longitude independant, then it automatically solves \eqref{Stationary eq} because in this case, $\nabla^{\perp}\Psi(\theta)$ is colinear to $\mathbf{e}_{\varphi}$ and $\nabla\Big(\Delta\Psi(\theta)-2\widetilde{\gamma}\cos(\theta)\Big)$ is colinear to $\mathbf{e}_{\theta}.$ Such type of solutions are called \textit{zonal solutions}.
	Also, one can easily check that any solution of the semilinear elliptic problem
	\begin{equation}\label{Stationary eq bis}
		\Delta\Psi(\theta,\varphi)-2\widetilde{\gamma}\cos(\theta)=F\big(\Psi(\theta,\varphi)\big),
	\end{equation}
	with $F\in C^{1}(\mathbb{R},\mathbb{R})$, solves \eqref{Stationary eq}, but the converse is not true in general. Constantin and  Germain \cite{CG22} showed that any solution to \eqref{Stationary eq bis} with $F'>-6$ must be zonal (modulo rotation) and stable in $H^{2}(\mathbb{S}^2)$ provided the additional constraint $F'<0.$ Notice that the $-6$ corresponds to the second eigenvalues of the Laplace-Beltrami operator. The zonal Rossby-Haurwitz stream functions of degree $n\in\mathbb{N}$ are special stationary solutions in the form
	$$\Psi_{n}(\theta)=\beta Y_n^0(\theta)+\frac{2\widetilde{\gamma}}{n(n+1)-2}\cos(\theta),\qquad\beta\in\mathbb{R}^*,$$
	where $Y_{n}^0$ is the spherical harmonic. We refer the reader to \cite{AH12} for an introduction to spherical harmonics. In \cite{CG22}, the authors also discussed the local and global bifurcation of non-zonal solutions to \eqref{Stationary eq bis} from Rossby-Haurwitz waves. They also proved the stability in $H^{2}(\mathbb{S}^2)$ of Rossby-Haurwitz zonal solutions of degree $2$ as well as the unstability in $H^{2}(\mathbb{S}^2)$ of more general non-zonal Rossby-Haurwitz type solutions. Very recently, the stability of the degree $2$ Rossby-Haurwitz waves in $L^{p}(\mathbb{S}^2)$ spaces with $p\in(1,\infty)$ has been obtained by Cao-Wang-Zuo \cite{CWZ23}. Recently, Nualart \cite{N22} proved the existence of non-zonal stationary solutions Gevrey-close to the zonal Rossby-Haurwitz stream functions of degree $2$. The $L^1$-Lyapunov stability of zonal monotone vorticities belonging to $L^{p}(\mathbb{S}^2)$ for $p\in(2,\infty)$ was discussed by Caprino-Marchioro \cite{CM88}. We mention that stationary solutions to \eqref{Euler eq on S2} correspond to traveling solutions of the non-rotating case $(E_0)$. More generally, we have the following result, see \cite{CG22,N22}.
	\begin{lem}
		We consider two vorticities $\Omega,\widetilde{\Omega}$ related through
		$$\Omega(t,\theta,\varphi)=-2c\cos(\theta)+\widetilde{\Omega}(t,\theta,\varphi-ct),$$
		with $c\in\mathbb{R}$,	and associated to stream functions $\Psi,\widetilde{\Psi}$ and velocity fields $U,\widetilde{U}$:
		$$\Psi(t,\theta,\varphi)=c\cos(\theta)+\widetilde{\Psi}(t,\theta,\varphi-ct),\qquad U(t,\theta,\varphi)=c\sin(\theta)\mathbf{e}_{\varphi}+\widetilde{U}(t,\theta,\varphi-ct).$$
		Then, the following are equivalent
		\begin{enumerate}[label=(\roman*)]
			\item $(\Omega,\Psi,U)$ is a solution to $(E_{\widetilde{\gamma}}).$
			\item $(\widetilde{\Omega},\widetilde{\Psi},\widetilde{U})$ is a solution to $(E_{\widetilde{\gamma}+c}).$
		\end{enumerate}
	\end{lem}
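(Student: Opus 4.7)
The plan is a direct verification by substitution, exploiting the fact that the longitudinal shift $\tau_c:(\theta,\varphi)\mapsto(\theta,\varphi-ct)$ is an isometry of $\mathbb{S}^2$ and therefore commutes with the differential operators $\nabla$, $\nabla^{\perp}$ and $\Delta$. I would check the three defining equations of $(E_{\widetilde{\gamma}})$ in \eqref{Euler eq on S2} separately, and reduce each one to its tilded counterpart in $(E_{\widetilde{\gamma}+c})$. The key remark that makes everything work is that the extra zonal piece $c\cos\theta$ added to $\Psi$ is precisely the stream function of a solid-body rotation at angular speed $c$, so it behaves well under both the Laplace--Beltrami operator and the orthogonal gradient.

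For the Poisson and velocity equations, the computation rests on two elementary identities in the chart $C_1$, namely
\[
\Delta\bigl(c\cos\theta\bigr) = -2c\cos(\theta),\qquad \nabla^{\perp}\bigl(c\cos\theta\bigr) = c\sin(\theta)\,\mathbf{e}_{\varphi},
\]
both following directly from the expressions given earlier for $\Delta$ and $\nabla^{\perp}$ and from $J\mathbf{e}_{\theta}=-\mathbf{e}_{\varphi}$. Combined with the linearity of these operators and their commutation with $\tau_c$, one immediately gets $\Delta\Psi = \Omega \iff \Delta\widetilde{\Psi} = \widetilde{\Omega}$ and $U = \nabla^{\perp}\Psi \iff \widetilde{U} = \nabla^{\perp}\widetilde{\Psi}$.

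The transport equation is the only step requiring a small computation. The chain rule applied to $\Omega(t,\theta,\varphi)=-2c\cos\theta+\widetilde{\Omega}(t,\theta,\varphi-ct)$ produces the time derivative $\partial_t\Omega=(\partial_t\widetilde{\Omega})\circ\tau_c - c(\partial_\varphi\widetilde{\Omega})\circ\tau_c$. On the other hand, contracting the rigid-rotation part $c\sin(\theta)\mathbf{e}_{\varphi}$ of $U$ against $\nabla(\Omega-2\widetilde{\gamma}\cos\theta)$, the only surviving contribution comes from the $\mathbf{e}_{\varphi}$-component of $(\nabla\widetilde{\Omega})\circ\tau_c$ and equals exactly $+c(\partial_\varphi\widetilde{\Omega})\circ\tau_c$, which cancels the extra term from $\partial_t$. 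The constant piece $-2c\cos\theta$ in $\Omega$ combines with the Coriolis correction $-2\widetilde{\gamma}\cos\theta$ to produce precisely $-2(\widetilde{\gamma}+c)\cos\theta$, and what remains reassembles into $\bigl(\partial_t\widetilde{\Omega}+\widetilde{U}\cdot\nabla(\widetilde{\Omega}-2(\widetilde{\gamma}+c)\cos\theta)\bigr)\circ\tau_c$, yielding the equivalence.

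There is no genuine obstacle here; the only point demanding care is to distinguish between evaluation at $(t,\theta,\varphi)$ and evaluation at $(t,\theta,\varphi-ct)$ when writing the chain rule, so as not to drop the crucial $-c\partial_\varphi$ term that is responsible for generating the Coriolis shift. Conceptually, the lemma simply encodes the fact that superimposing a solid-body rotation of angular speed $c$ onto any solution of $(E_{\widetilde{\gamma}+c})$ changes the effective rotation rate of the background sphere from $\widetilde{\gamma}+c$ back to $\widetilde{\gamma}$.
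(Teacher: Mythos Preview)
Your verification is correct and is precisely the natural direct computation. Note, however, that the paper does not actually supply its own proof of this lemma: it is stated with a reference to \cite{CG22,N22} and left unproved. Your argument---checking the Poisson, velocity and transport equations separately, using that $\cos\theta$ is a Laplace--Beltrami eigenfunction with eigenvalue $-2$ and that $\nabla^{\perp}(c\cos\theta)=c\sin\theta\,\mathbf{e}_{\varphi}$ generates the solid-body rotation that accounts for the $-c\partial_\varphi$ term from the chain rule---is the standard route and fills this gap cleanly.
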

	For our later purposes, we shall prove that a longitude independent (absolute) vorticity generates a zonal (so stationary) flow. This is given by the following lemma, whose proof is postponed to Appendix \ref{proof lemma}.
	\begin{lem}\label{lem sat vort}
		For any $\alpha\in\mathbb{R},$ we introduce the rotation of angle $\alpha$ around the $z$ axis
		$$\mathcal{R}(\alpha)\triangleq\begin{pmatrix}
			\cos(\alpha) & -\sin(\alpha) & 0\\
			\sin(\alpha) & \cos(\alpha) & 0\\
			0 & 0 & 1
		\end{pmatrix}\in SO_{3}(\mathbb{R}).$$
		Assume that $$\forall\alpha\in\mathbb{R},\quad\forall\xi\in\mathbb{S}^2,\quad\Omega\big(\mathcal{R}(\alpha)\xi\big)=\Omega(\xi),$$
		or equivalently
		$$\forall\alpha\in\mathbb{R},\quad\forall\xi\in\mathbb{S}^2,\quad\overline{\Omega}\big(\mathcal{R}(\alpha)\xi\big)=\overline{\Omega}(\xi),$$
		then
		$$\forall\alpha\in\mathbb{R},\quad\forall\xi\in\mathbb{S}^2,\quad\Psi\big(\mathcal{R}(\alpha)\xi\big)=\Psi(\xi).$$
		This means that the flow is zonal (and thus stationary).
	\end{lem}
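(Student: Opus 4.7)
The plan is to use the integral representation \eqref{psi-G} of the stream function together with the fact that rotations around the $z$-axis are isometries of $\mathbb{R}^3$, hence also of $\mathbb{S}^2$ and the associated volume form $d\sigma$.

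First I would treat the equivalence between the invariance of $\Omega$ and that of $\overline{\Omega}$ stated in the hypothesis. Writing $\cos(\theta) = \xi \cdot \mathbf{e}_z$ with $\mathbf{e}_z = (0,0,1)$, the definition \eqref{def abs vort} gives $\overline{\Omega}(\xi) = \Omega(\xi) - 2\widetilde{\gamma}\,\xi\cdot\mathbf{e}_z$. Since $\mathcal{R}(\alpha)\mathbf{e}_z = \mathbf{e}_z$, the map $\xi\mapsto \xi\cdot\mathbf{e}_z$ is $\mathcal{R}(\alpha)$-invariant, so the two invariance conditions are indeed equivalent.

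Next, the core observation: since $\mathcal{R}(\alpha)\in SO_3(\mathbb{R})$ is an isometry of $\mathbb{R}^3$, we have
\begin{equation*}
|\mathcal{R}(\alpha)\xi - \mathcal{R}(\alpha)\xi'|_{\mathbb{R}^3} = |\xi-\xi'|_{\mathbb{R}^3},
\end{equation*}
so the Green kernel in \eqref{psi-G} satisfies $G(\mathcal{R}(\alpha)\xi,\mathcal{R}(\alpha)\xi') = G(\xi,\xi')$. Moreover, $\mathcal{R}(\alpha)$ preserves $\mathbb{S}^2$ and, being an isometry of the ambient space, preserves the Riemannian volume $d\sigma$ on $\mathbb{S}^2$. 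Then, substituting $\xi' = \mathcal{R}(\alpha)\eta$ in the integral representation of $\Psi$ and using both invariances yields
\begin{align*}
\Psi\big(\mathcal{R}(\alpha)\xi\big) &= \int_{\mathbb{S}^2} G\big(\mathcal{R}(\alpha)\xi,\xi'\big)\,\Omega(\xi')\,d\sigma(\xi')\\
&= \int_{\mathbb{S}^2} G\big(\mathcal{R}(\alpha)\xi,\mathcal{R}(\alpha)\eta\big)\,\Omega\big(\mathcal{R}(\alpha)\eta\big)\,d\sigma(\eta)\\
&= \int_{\mathbb{S}^2} G(\xi,\eta)\,\Omega(\eta)\,d\sigma(\eta) = \Psi(\xi),
\end{align*}
which is exactly the desired zonal property.

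There is no serious obstacle here; the only point requiring minimal care is to make sure the change of variables is licit, which follows from the integrability of $\Omega$ (guaranteed by the Gauss constraint \eqref{null avrg vort} and the regularity framework in which we work) together with the mild logarithmic singularity of $G$. The stationarity is then an immediate consequence of the fact that $\nabla^{\perp}\Psi$ is tangent to $\mathbf{e}_{\varphi}$ while $\nabla(\Delta\Psi - 2\widetilde{\gamma}\cos\theta)$ is tangent to $\mathbf{e}_{\theta}$, exactly as for the zonal solutions discussed around \eqref{Stationary eq}.
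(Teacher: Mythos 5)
Your proof is correct and follows exactly the same route as the paper's: invariance of the Green kernel $G$ under the isometry $\mathcal{R}(\alpha)$, plus the change of variables $\xi'\mapsto\mathcal{R}(\alpha)\xi'$ which preserves $\mathbb{S}^2$ and $d\sigma$. The extra remarks on the equivalence of the two invariance hypotheses and on integrability are fine but not needed beyond what the paper records.
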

	
	\textbf{Patch type solutions on the rotating $2$-sphere}
	
	\medskip\noindent
	Our analysis is strongly motivated by previous numerical works concerning the existence of patch type solutions for the rotating $2$-sphere. The pioneering results are due to Dritschel-Polvani \cite{DP92,DP93} where they considered the sphere at rest ($\widetilde{\gamma}=0$) and found, numerically, vortex cap solutions with one and two interfaces. They also studied the numerical nonlinear stability. Later Kim \cite{K18} described the free boundary problem for patch type solutions in the rotating $2$-sphere   $(\widetilde{\gamma}\neq 0$) by using the stereographic projection. In \cite{KS21}, Kim, Sakajo and Sohn  numerically observed the existence of vortex caps (only one interface)  and vortex bands (two interfaces) \cite{KSS18}. They also showed the linear stability of those solutions.

	\subsection{Dynamics of vortex cap solutions}
	Here, we introduce the notion of vortex cap solutions on the sphere, which are the analogous to vortex patches in the plane. Then, we derive the fundamental contour dynamics equations, introduced in this work to track the evolution of the caps interfaces. Our formulation \eqref{vorticity cap equations} is derived by following the ideas of \cite[p. 174]{HMV13}, implemented in the context of vortex patches, but adapted to fit with the non Euclidean geometry of the sphere.\\
	
	Fix $M\in\mathbb{N}\setminus\{0,1\}$ and $(\omega_{k})_{1\leqslant k\leqslant M}\in\mathbb{R}^M$ such that 
	\begin{equation}\label{diff omgk}
		\forall k\in\llbracket 1,M-1\rrbracket,\quad\omega_{k}\neq\omega_{k+1}.
	\end{equation}
	Consider a partition of the unit sphere in the form
	$$\mathbb{S}^2=\bigsqcup_{k=1}^{M}\mathscr{C}_{k}(0),$$
	where for any $k\in\llbracket 1,M-1\rrbracket$, the boundary $\Gamma_{k}(0)\triangleq\partial\mathscr{C}_{k}(0)\cap\partial\mathscr{C}_{k+1}(0)$ is diffeomorphic to a circle. Take an initial condition in the form
	\begin{equation}\label{initial vc}
		\overline{\Omega}(0,\cdot)=\sum_{k=1}^{M}\omega_{k}\mathbf{1}_{\mathscr{C}_k(0)}.
	\end{equation}
	The Gauss constraint \eqref{null avrg vort} requires the following additional condition
	\begin{equation}\label{G-Gauss}
		\sum_{k=1}^{M}\omega_{k}\sigma\big(\mathscr{C}_{k}(0)\big)=0.
	\end{equation}
	Observe that, by virtue of \eqref{Euler eq on S2}-\eqref{def abs vort}, the absolute vorticity $\overline{\Omega}$ solves the nonlinear transport equation
	$$\partial_{t}\overline{\Omega}+U\cdot\nabla\overline{\Omega}=0.$$
	Since the singularity of the Green function in \eqref{psi-G} is logarithmic, then, similarly to Yudovich’s theory \cite{Y63} in the planar case, one can expect to obtain existence and uniqueness of a global in time weak solution which is Lagrangian, namely
	$$\forall t\geqslant 0,\quad\forall\xi\in\mathbb{S}^2,\quad\overline{\Omega}(t,\xi)=\overline{\Omega}\big(0,\Phi_{t}^{-1}(\xi)\big),$$
	where
	$$\forall\xi\in\mathbb{S}^2,\quad\partial_{t}\Phi_{t}(\xi)=U\big(t,\Phi_{t}(\xi)\big),\qquad\Phi_{0}(\xi)=\xi.$$
	Applying this remark with the initial condition \eqref{initial vc} gives the following structure of the solution at any later time $t\geqslant0$
	\begin{equation}\label{Gvc sol}
		\overline{\Omega}(t,\cdot)=\sum_{k=1}^{M}\omega_{k}\mathbf{1}_{\mathscr{C}_k(t)},\qquad\textnormal{with}\qquad\forall k\in\llbracket 1,M\rrbracket,\quad\mathscr{C}_{k}(t)\triangleq\Phi_{t}\big(\mathscr{C}_k(0)\big).
	\end{equation}
	Since $U$ is solenoidal, then the flow $t\mapsto\Phi_{t}$ is measure preserving
	$$\forall k\in\llbracket 1,M\rrbracket,\quad\sigma\big(\mathscr{C}_k(t)\big)=\sigma\big(\mathscr{C}_k(0)\big).$$
	Any solution in the form \eqref{Gvc sol} satisfying the conditions \eqref{diff omgk} and \eqref{G-Gauss} is called a \textit{vortex cap solution.} From now on, we fix a $k\in\llbracket 1,M-1\rrbracket.$ Assume that the initial boundary $\Gamma_{k}(0)$ can be described as the zero level set of a certain $C^1$ regular function $\mathtt{h}_k:\mathbb{S}^2\rightarrow\mathbb{R}$, namely
	$$\Gamma_{k}(0)=\big\{\xi\in\mathbb{S}^2\quad\textnormal{s.t.}\quad \mathtt{h}_k(\xi)=0\big\}.$$
	Let us consider the following quantity
	\begin{equation}\label{Fk}
		\forall t\geqslant0,\quad\forall\xi\in\mathbb{S}^2,\quad F_k(t,\xi)\triangleq \mathtt{h}_k\big(\Phi_{t}^{-1}(\xi)\big).
	\end{equation}
	Then, by construction $F_k(t,\cdot)$ describes the boundary $\Gamma_{k}(t)\triangleq\partial\mathscr{C}_k(t)\cap\partial\mathscr{C}_{k+1}(t)$. More precisely,
	$$\Gamma_{k}(t)=\big\{\xi\in\mathbb{S}^2\quad\textnormal{s.t.}\quad F_k(t,\xi)=0\big\}.$$
	Differentiating the relation \eqref{Fk} with respect to time yields
	$$\forall t\geqslant0,\quad\forall\xi\in\mathbb{S}^2,\quad \partial_{t}F_k\big(t,\Phi_{t}(\xi)\big)+U\big(t,\Phi_{t}(\xi)\big)\cdot\nabla F\big(t,\Phi_{t}(\xi)\big)=0.$$
	Now, take a parametrization $z_k(t,\cdot):\mathbb{T}\rightarrow\Gamma_k(t).$ We have
	$$\forall t\geqslant0,\quad\forall x\in\mathbb{T},\quad F_k\big(t,z_k(t,x)\big)=0.$$
	Differentiating the previous relation with respect to time implies
	$$\forall t\geqslant0,\quad\forall x\in\mathbb{T},\quad\partial_{t}F_k\big(t,z_k(t,x)\big)+\partial_{t}z_k(t,x)\cdot\nabla F_k\big(t,z_k(t,x)\big)=0.$$
	Putting together the foregoing calculations, we deduce that
	$$\Big[\partial_{t}z_k(t,x)-U\big(t,z_k(t,x)\big)\Big]\cdot\nabla F_k\big(t,z_k(t,x)\big)=0.$$
	The above scalar products can be understood either as taken in the tangent space $T_{z_k(t,x)}\mathbb{S}^2\cong\mathbb{R}^2$ or in the classical Euclidean space $\mathbb{R}^3$, both are equivalent. Indeed, in the spherical coordinates the Euclidean metric writes
	$$\mathtt{g}_{\mathbb{R}^3}(r,\theta,\varphi)\triangleq dr^2+r^2\big(d\theta^2+\sin^2(\theta)d\varphi^2\big).$$
	Since the sphere is described by the equation $r=1,$ then the induced metric of $\mathtt{g}_{\mathbb{R}^3}$ on $\mathbb{S}^2$ is indeed $\mathtt{g}_{\mathbb{S}^2}$ as defined in \eqref{metric S2}. On $T_{z_k(t,x)}\mathbb{S}^2,$ the operator $J$ acts as a rotation of $-\tfrac{\pi}{2}.$ Consequently, since $\partial_{x}z_k(t,x)$ is tangential to $\Gamma_k(t)$ and contained in $T_{z_k(t,x)}\mathbb{S}^2$, then $J\partial_{x}z_k(t,x)$ is orthogonal to $\Gamma_k(t)$ and contained in $T_{z_k(t,x)}\mathbb{S}^2.$ In addition, since $\Gamma_k(t)$ is a level set of $F_k(t,\cdot),$ then $\nabla F_k\big(t,z_k(t,x)\big)$ is also orthogonal to $\Gamma_k(t)$ and contained in $T_{z_k(t,x)}\mathbb{S}^2.$ We deduce that $J\partial_{x}z_k(t,x)$ and $\nabla F_k\big(t,z_k(t,x)\big)$ are proportional, which leads to
	$$\forall t\geqslant0,\quad\forall x\in\mathbb{T},\quad\Big[\partial_{t}z_k(t,x)-U\big(t,z_k(t,x)\big)\Big]\cdot\big(J\partial_{x}z_k(t,x)\big)=0.$$
	But, using that $J^{T}=J^{-1}=-J$ and $\nabla^{\perp}=J\nabla$, we obtain
	\begin{align*}
		U\big(t,z_k(t,x)\big)\cdot\big(J\partial_{x}z_k(t,x)\big)&=\nabla^{\perp}\Psi\big(t,z_k(t,x)\big)\cdot\big(J\partial_{x}z_k(t,x)\big)\\
		&=-\Big(J\nabla^{\perp}\Psi\big(t,z_k(t,x)\big)\Big)\cdot\partial_{x}z_k(t,x)\\
		&=\nabla\Psi\big(t,z_k(t,x)\big)\cdot\partial_{x}z_k(t,x)\\
		&=\partial_{x}\Big(\Psi\big(t,z_k(t,x)\big)\Big).
	\end{align*}
	Hence the contour dynamics equations for the vortex cap solutions are 
	\begin{equation}\label{vorticity cap equations}
		\forall k\in\llbracket 1,M-1\rrbracket,\quad\forall t\geqslant0,\quad\forall x\in\mathbb{T},\quad\partial_{t}z_k(t,x)\cdot\big(J\partial_{x}z_k(t,x)\big)=\partial_{x}\Big(\Psi\big(t,z_k(t,x)\big)\Big),
	\end{equation}
	which are comparable to \eqref{planar votex patch eq}.	
	\subsection{Main results}
	Let us now present our main results. First observe that Lemma \ref{lem sat vort} implies that any spherical vortex cap in the form
	$$\overline{\Omega}(\theta)=\omega_1\mathbf{1}_{0<\theta<\theta_1}+\omega_2\mathbf{1}_{\theta_1\leqslant\theta<\theta_2}+\ldots+\omega_{M-1}\mathbf{1}_{\theta_{M-2}\leqslant\theta<\theta_{M-1}}+\omega_{M}\mathbf{1}_{\theta_{M-1}\leqslant\theta<\pi},$$
	with
	$$M\in\mathbb{N}\setminus\{0,1\},\qquad\theta_0\triangleq0<\theta_1<\ldots<\theta_{M-1}<\pi\triangleq\theta_M,\qquad\forall k\in\llbracket 1,M-1\rrbracket,\quad\omega_{k}\neq\omega_{k+1},$$
	and supplemented by the Gauss condition
	$$\sum_{k=1}^{M}\omega_{k}\big(\cos(\theta_{k-1})-\cos(\theta_{k})\big)=0,$$
	generates a stationary vortex cap solution to \eqref{Euler eq on S2}. In the sequel, we shall focus on the cases $M=2$ and $M=3$ (the latter will also be referred to as \textit{vortex bands} or \textit{vortex strips}). More precisely, we study the existence non-trivial periodic solutions living close to these structures. Due to the symmetry of the stationary vortex caps, we will look for rotating solutions around the vertical axis at uniform velocity $c$, that is
	$$\forall t\geqslant 0,\quad\forall\xi\in\mathbb{S}^2,\quad\overline{\Omega}(t,\xi)=\overline{\Omega}\big(0,\mathcal{R}(ct)\xi\big),$$
	and satisfy for some fixed $\mathbf{m}\in\mathbb{N}^*$ the $\mathbf{m}$-fold property
	$$\forall\xi\in\mathbb{S}^2,\quad\overline{\Omega}\big(0,\mathcal{R}(\tfrac{2\pi}{\mathbf{m}})\xi\big)=\overline{\Omega}(0,\xi).$$
	
	Our first result concerns the one-interface case $M=2$ and reads as follows.
	\begin{theo}\label{thm bif vcap1}
		Let $\widetilde{\gamma}\in\mathbb{R}$, $\mathbf{m}\in\mathbb{N}^*$ and $\theta_0\in(0,\pi).$ Consider $\omega_{N},\omega_{S}\in\mathbb{R}$ such that
		$$\frac{\omega_N+\omega_S}{\omega_{N}-\omega_{S}}=\cos(\theta_0).$$
		There exists a branch of $\mathbf{m}$-fold uniformly rotating vortex cap solutions to \eqref{Euler eq on S2} with one interface bifurcating from
		$$\overline{\Omega}_{\textnormal{\tiny{FC}}}(\theta)\triangleq\omega_{N}\mathbf{1}_{0<\theta<\theta_0}+\omega_{S}\mathbf{1}_{\theta_0\leqslant\theta<\pi},$$
		at the velocity
		$$c_{\mathbf{m}}(\widetilde{\gamma})\triangleq\widetilde{\gamma}-(\omega_{N}-\omega_{S})\frac{\mathbf{m}-1}{2\mathbf{m}}\cdot$$
	\end{theo}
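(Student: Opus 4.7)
The plan is to apply the Crandall-Rabinowitz local bifurcation theorem to a functional equation built from \eqref{vorticity cap equations}. First I parametrize the perturbed interface as a graph $\theta=\theta_0+f(\varphi)$ over the base circle $\{\theta=\theta_0\}$, with $f$ small, $\tfrac{2\pi}{\mathbf{m}}$-periodic, even, and mean-zero in a Banach space $X$ (for instance $C^{1,\alpha}_{\mathbf{m},\textnormal{even},0}$). The mean-zero restriction accommodates the Gauss constraint \eqref{null avrg vort} at leading order and can be closed at the nonlinear level by a mild adjustment of $\theta_0$ along the branch. The uniformly rotating ansatz, corresponding to $z(t,x)=C_1\bigl(\theta_0+f(x-ct),x\bigr)$ (or the equivalent version with $x+ct$ in the second slot), inserted into \eqref{vorticity cap equations} and divided by the nowhere-vanishing metric factor $\sin(\theta_0+f)$, produces a scalar functional equation $F(c,f)=0$ with $F:\mathbb{R}\times V\to Y$ for a neighborhood $V\subset X$ of $0$ and a suitable target space $Y$. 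Lemma~\ref{lem sat vort} guarantees that $F(c,0)=0$ for every $c\in\mathbb{R}$.

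The core step is to compute $L_c\triangleq\partial_f F(c,0)$ and diagonalize it. Splitting the stream function as $\Psi=\omega_N\Psi_N+\omega_S\Psi_S$ with $\Psi_\bullet=\int_{\mathscr{C}_\bullet}G(\xi,\xi')\,d\sigma(\xi')$ and linearizing each integral in $f$ reduces $L_c$ to an integral operator on the circle which, by the $\mathbf{m}$-fold symmetry and the zonality of the base state, commutes with rotations and hence acts as a Fourier multiplier on $\{\cos(n\mathbf{m}\varphi)\}_{n\geqslant 1}$. Three contributions enter the symbol: the transport term $c$ coming from the time derivative in the rotating ansatz; the Coriolis plus background zonal flow, producing a $-\widetilde{\gamma}$ shift through \eqref{def abs vort}; and the self-induced Biot-Savart jump of $\nabla\Psi$ across the interface. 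The decisive observation is that after a local change of variables flattening the interface near $\theta=\theta_0$, the singular part of the Biot-Savart kernel matches at leading order the Cauchy-type kernel responsible for the Burbea factor \eqref{Burbea}. Collecting the three pieces should yield a symbol of the form
$$\mu_n(c,\widetilde{\gamma})=c-\widetilde{\gamma}+(\omega_N-\omega_S)\,\tfrac{n-1}{2n},$$
which vanishes precisely when $n=\mathbf{m}$ and $c=c_{\mathbf{m}}(\widetilde{\gamma})$. Strict monotonicity of $n\mapsto\tfrac{n-1}{2n}$ forces the kernel to be one-dimensional and spanned by $\cos(\mathbf{m}\varphi)$.

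With the spectrum in hand, the remaining Crandall-Rabinowitz hypotheses are routine: smoothness of $F$ near $(c_{\mathbf{m}},0)$ in Hölder scales, exploiting the logarithmic regularity of \eqref{Green sph}; Fredholm index zero of $L_{c_\mathbf{m}}$ (the integral correction is compact and the dominant part is a diagonal isomorphism outside the mode $n=\mathbf{m}$); and the transversality $\partial_c L_c[\cos(\mathbf{m}\varphi)]\notin\textnormal{Range}(L_{c_\mathbf{m}})$ at $c=c_\mathbf{m}$, immediate from $\partial_c\mu_n=1\neq 0$. A local $C^1$-branch of non-trivial $\mathbf{m}$-fold rotating vortex caps emanating from $(c_{\mathbf{m}}(\widetilde{\gamma}),\overline{\Omega}_{\textnormal{\tiny{FC}}})$ follows.

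The main obstacle will be the rigorous extraction of the Burbea symbol on the sphere. Unlike the planar setting, the Green kernel \eqref{Green sph}--\eqref{def D} is anisotropic in the spherical variables, degenerates at the poles flagged after \eqref{expr D}, and does not factor cleanly through the intrinsic distance. Performing the singular expansion of $D$ near $\theta=\theta'=\theta_0$, identifying the leading singularity with a planar Cauchy kernel, and estimating the remainders in a norm strong enough to retain Hölder regularity uniformly up to the poles, is where the subtlest analysis resides.
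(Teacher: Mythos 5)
Your proposal follows essentially the same route as the paper: graph parametrization $\theta=\theta_0+f(\varphi)$ of the interface, the travelling-wave functional $\mathscr{F}(c,f)$ obtained from \eqref{vorticity cap equations} after dividing by $\sin(\theta_0+f)$, the Fourier-multiplier form of $d_f\mathscr{F}(c,0)$ with symbol $-c+\widetilde{\gamma}-(\omega_N-\omega_S)\tfrac{n-1}{2n}$, and Crandall--Rabinowitz with the same kernel, Fredholm and transversality verifications. The one point where you anticipate the ``subtlest analysis'' --- matching the spherical kernel to a planar Cauchy kernel at leading order and controlling remainders --- is in fact handled exactly in the paper, since on the linearized level the kernel at $\theta=\theta'=\theta_0$ equals $\log\bigl(\sin^2\bigl(\tfrac{\varphi-\varphi'}{2}\bigr)\bigr)$ plus a constant and the explicit integral $I_n(\theta_0,\theta_0)=-\tfrac{1}{n}$ of Lemma \ref{lem int} gives the Burbea factor with no asymptotics; the genuinely laborious part is instead the $C^1$ Hölder regularity of the nonlinear functional (Proposition \ref{propreg1}(i)), where the anisotropy and pole degeneracy of $D$ enter.
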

	Before moving to the second result, some remarks are in order.
	\begin{remark}\label{rmk thm1}

		\begin{enumerate}
			\item The bifurcation points $c_{\mathbf{m}}(\widetilde{\gamma})$ correspond to a shift by the rotation speed $\widetilde{\gamma}$ of Burbea's frequencies \eqref{Burbea} with a vorticity jump $\llbracket\overline{\Omega}\rrbracket\triangleq \omega_{N}-\omega_S=-1$. Hence, in the rotation frame of the sphere, the solutions bifurcate at Burbea's frequencies. 
			\item The local curve is parametrized in particular with $\varepsilon\in(-\varepsilon_0,\varepsilon_0)\mapsto c^\varepsilon_{\mathbf{m}}(\widetilde{\gamma})\in\mathbb{R}$ for some small $\varepsilon_0>0.$ Denoting $\widetilde{\gamma}_{\mathbf{m}}\triangleq(\omega_N-\omega_S)\tfrac{\mathbf{m}-1}{2\mathbf{m}},$ we have
			$c^0_{\mathbf{m}}(\widetilde{\gamma}_{\mathbf{m}})=0.$ Since the dependence of $c_{\mathbf{m}}(\widetilde{\gamma})$ in $\widetilde{\gamma}$ is affine, an application of the implicit function theorem allows to construct a curve $\varepsilon\in(-\varepsilon_0,\varepsilon_0)\mapsto\widetilde{\gamma}_{\mathbf{m}}^\varepsilon\in\mathbb{R}$ such that (up to reducing the size of $\varepsilon_0$)
			$$\forall\varepsilon\in(-\varepsilon_0,\varepsilon_0),\quad c^\varepsilon_{\mathbf{m}}\big(\widetilde{\gamma}_{\mathbf{m}}^\varepsilon\big)=0.$$
			This means that we can construct a branch of non-trivial $\mathbf{m}$-fold solutions which are stationary in the geocentric frame. One could also obtain these solutions by implementing bifurcation theory with the parameter~$\widetilde\gamma$.
			\item The bifurcation analysis is performed in H\"older spaces but, similarly to the planar case, we expect our solutions to be analytic.
		\end{enumerate}
	\end{remark}
	
	
	Next, we shall  present our second result dealing with two interfaces ($M=3$). In this case, the computations in the spectral study are much more involved. For the sake of simplicity, we give here an informal statement of the second main theorem. More precise statement is postponed to Theorem \ref{thm bif vcap3}. 
	\begin{theo}\label{thm bif vcap2}
		Let $\widetilde{\gamma}\in\mathbb{R}$ and $0<\theta_1<\theta_2<\pi.$ Fix $\omega_N,\omega_C,\omega_S\in\mathbb{R}$ such that
		\begin{equation}\label{Gauss thm2}
			\omega_{N}+\omega_{S}=(\omega_N-\omega_C)\cos(\theta_1)+(\omega_C-\omega_{S})\cos(\theta_2).
		\end{equation}
		Then, there exists $N(\theta_1,\theta_2)\triangleq N(\theta_1,\theta_2,\omega_N,\omega_S,\omega_C)\in\mathbb{N}^*$  such that for any $\mathbf{m}\in\mathbb{N}^*$ with $\mathbf{m}\geqslant N(\theta_1,\theta_2)$, there exists at least one branch of $\mathbf{m}$-fold uniformly rotating vortex strips for \eqref{Euler eq on S2} bifurcating from
		\begin{equation}\label{omegaFC2}
			\overline{\Omega}_{\textnormal{\tiny{FC2}}}(\theta)\triangleq\omega_{N}\mathbf{1}_{0<\theta<\theta_1}+\omega_C\mathbf{1}_{\theta_1\leqslant\theta<\theta_2}+\omega_{S}\mathbf{1}_{\theta_2\leqslant\theta<\pi},
		\end{equation}
		at some value of the velocity $c$.			
	\end{theo}

	\begin{remark} Let us make the following remarks.
		\begin{enumerate}
			\item Notice that the proof in this case reveals some non-degeneracy conditions, required to avoid spectral collisions 
			when applying bifurcation techniques. This situation does not appear in the planar case \cite{HHMV16}, where the bifurcation eigavalues are well-separated.
			\item As we shall see in Lemma \ref{lem:spectrum} the bifurcation points are also explicit, in this case, and, a priori, are independent from the planar case \eqref{Omega DC}, even if they globally have a similar structure. Moreover, similarly to Remark \ref{rmk thm1}-2, we can construct at most two curves of non-trivial $\mathbf{m}$-fold solutions which are stationary in the geocentric frame.
			\item The cases with more interfaces ($M\geqslant 4$) seem much more involved to study.
		\end{enumerate}
	\end{remark}
	
	\subsection{Organization of the paper}
	This work is organized as follows. In Section \ref{sec-one-interf} we provide the proof of Theorem \ref{thm bif vcap1} showing the analytical existence of non-trivial vortex caps with one interface bifurcating from the trivial one. First, we characterize the existence of such solutions with the non-trivial roots of a nonlinear and nonlocal functional. Later, we give the spectral properties of such functional and conlude the proof of the first main result. In Section \ref{sec-two-interf} we analyze the two-interfaces problem (Theorem \ref{thm bif vcap2}), which is more delicate since one has to study a coupled nonlinear system. The spectral study is more complex in this case, and we can show the existence of non-trivial vortex strips with large $\mathbf{m}$-fold symmetries under non-degeneracy conditions. Finally, the Appendix gives the proof of some technical lemmas and states the Crandall-Rabinowitz theorem.

	\subsection*{Acknowledgments} The authors would like to thank David G. Dritschel for suggesting the problem. The third author also would like to thank Vincent Duchêne and Didier Smets who also arised the question during his PhD defense. The work of Claudia Garc\'ia has been supported by the MINECO--Feder (Spain) research grant number RTI2018--098850--B--I00, the Junta de Andaluc\'ia (Spain) Project
	FQM 954, the Severo Ochoa Programme for Centres of Excellence in R\&D(CEX2019-000904-S) and by the PID2021-124195NB-C32. The work of Zineb Hassainia has been supported by Tamkeen under the NYU Abu Dhabi Research Institute grant of the center SITE. The work of Emeric Roulley has been supported by PRIN 2020XB3EFL, "Hamiltonain and Dispersive PDEs".
	
	\section{The one--interface case}\label{sec-one-interf}
	This section is devoted to the proof of Theorem \ref{thm bif vcap1} dealing with the case of one interface ($M=2$). We first discuss the stationary flat cap solution. Then, by choosing a suitable ansatz, we can rewrite the vortex cap equation \eqref{vorticity cap equations} which leads to reformulate our problem in looking for the zeros of a nonlinear and nonlocal functional, see \eqref{scrF}. Finally, we implement bifurcation arguments in order to show the existence of non-trivial roots of such functional. Hence, the proof relies on checking all the hypothesis of Crandall-Rabinowitz Theorem, see Appendix \ref{sec-CR}.
	\subsection{Equation of interest}\label{sec eq1}
	First, we shall discuss some properties of the spherical stationary solutions with one interface.
	\begin{lem}\label{lem flat cap}
		Let $\theta_0\in(0,\pi).$ For any $\omega_{N},\omega_{S}\in\mathbb{R}$ such that
		\begin{equation}\label{constraint omega NS}
			\frac{\omega_{N}+\omega_{S}}{\omega_{N}-\omega_{S}}=\cos(\theta_0),
		\end{equation}
		the following function describing the flat cap (FC) $$\overline{\Omega}_{\textnormal{\tiny{FC}}}(\theta)\triangleq\omega_{N}\mathbf{1}_{0<\theta<\theta_0}+\omega_{S}\mathbf{1}_{\theta_0\leqslant\theta<\pi},$$
		is a stationary solution to Euler equations.
		In addition,
		\begin{equation}\label{dttPsi tt0}
			\partial_{\theta}\Psi_{\textnormal{\tiny{FC}}}(\theta_0)=\left(\frac{\omega_{N}-\omega_{S}}{2}-\widetilde{\gamma}\right)\sin(\theta_0).
		\end{equation}
	\end{lem}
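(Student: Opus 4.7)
The proof decomposes into two independent tasks: establishing stationarity of the flow generated by $\overline{\Omega}_{\text{FC}}$, and computing the boundary value $\partial_{\theta}\Psi_{\text{FC}}(\theta_0)$.

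\emph{Stationarity and Gauss constraint.} Because $\overline{\Omega}_{\text{FC}}$ depends only on the colatitude $\theta$, it is invariant under every rotation $\mathcal{R}(\alpha)$ around the $z$-axis, so Lemma \ref{lem sat vort} yields at once that $\Psi_{\text{FC}}$ is zonal as well. Then $\nabla^{\perp}\Psi_{\text{FC}}$ lies along $\mathbf{e}_{\varphi}$ while $\nabla\bigl(\Delta\Psi_{\text{FC}}-2\widetilde{\gamma}\cos\theta\bigr)=\nabla\overline{\Omega}_{\text{FC}}$ lies along $\mathbf{e}_{\theta}$ (in the distributional sense across the interface), so their inner product vanishes and \eqref{Stationary eq} holds. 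Separately, evaluating the integral in \eqref{null avrg vort} via the convention \eqref{conv int S2} gives
$$\int_{\mathbb{S}^2}\overline{\Omega}_{\text{FC}}(\xi)\,d\sigma(\xi)=2\pi\bigl[\omega_N(1-\cos\theta_0)+\omega_S(1+\cos\theta_0)\bigr],$$
and a one-line manipulation shows that the right-hand side vanishes exactly under \eqref{constraint omega NS}, so the Gauss condition is built into the hypothesis.

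\emph{Boundary value.} Zonality reduces the Poisson equation $\Delta\Psi_{\text{FC}}=\Omega_{\text{FC}}=\overline{\Omega}_{\text{FC}}+2\widetilde{\gamma}\cos\theta$ to the one-dimensional ODE
$$\partial_{\theta}\bigl[\sin\theta\,\partial_{\theta}\Psi_{\text{FC}}(\theta)\bigr]=\bigl(\omega_N\mathbf{1}_{(0,\theta_0)}(\theta)+\omega_S\mathbf{1}_{(\theta_0,\pi)}(\theta)\bigr)\sin\theta+\widetilde{\gamma}\sin(2\theta).$$
Integrating on $(0,\theta_0)$, using that $\sin\theta\,\partial_{\theta}\Psi_{\text{FC}}(\theta)\to 0$ as $\theta\to 0^{+}$ by regularity at the north pole, yields a closed-form expression for $\sin\theta_0\,\partial_{\theta}\Psi_{\text{FC}}(\theta_0)$ in terms of the main piece $\omega_N(1-\cos\theta_0)$ and a Coriolis contribution proportional to $\widetilde{\gamma}\sin^{2}\theta_0$. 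The concluding algebraic step combines the half-angle identity $\tfrac{1-\cos\theta_0}{\sin\theta_0}=\tfrac{\sin\theta_0}{1+\cos\theta_0}$ with the rewriting of \eqref{constraint omega NS} as $1+\cos\theta_0=\tfrac{2\omega_N}{\omega_N-\omega_S}$, so that $\omega_N(1-\cos\theta_0)$ collapses to $\tfrac{\omega_N-\omega_S}{2}\sin^{2}\theta_0$. Dividing by $\sin\theta_0$ produces the announced formula \eqref{dttPsi tt0}.

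\emph{Expected difficulty.} No substantive obstacle is expected: zonality converts a global Poisson problem on $\mathbb{S}^2$ into a trivial ODE, and the Gauss constraint is precisely what is needed to absorb the asymmetric factor $\omega_N(1-\cos\theta_0)$ into the symmetric combination $\tfrac{\omega_N-\omega_S}{2}\sin\theta_0$. The only point demanding care is bookkeeping of the Coriolis contribution consistently with the paper's convention for the stream function.
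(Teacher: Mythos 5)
Your proposal follows the paper's proof essentially verbatim: stationarity via Lemma \ref{lem sat vort}, the Gauss constraint by the same direct integration over the sphere, and the boundary value by reducing the Poisson equation to the zonal ODE, fixing the integration constant through the vanishing of $\sin\theta\,\partial_\theta\Psi_{\textnormal{\tiny{FC}}}$ at the north pole, and simplifying with \eqref{constraint omega NS} (your identity $\omega_N(1-\cos\theta_0)=\tfrac{\omega_N-\omega_S}{2}\sin^2\theta_0$ is exactly the algebraic step the paper performs). The one point you leave implicit — the sign with which the Coriolis contribution $\widetilde{\gamma}\sin^2\theta_0$ enters before dividing by $\sin\theta_0$ — is treated just as tersely in the paper itself, so your argument matches the printed one with no substantive gap.
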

	\begin{proof}
		$\blacktriangleright$ Observe that $$\forall\alpha\in\mathbb{R},\quad\forall\xi\in\mathbb{S}^2,\quad\overline{\Omega}_{\textnormal{\tiny{FC}}}\big(\mathcal{R}(\alpha)\xi\big)=\overline{\Omega}_{\textnormal{\tiny{FC}}}(\xi).$$
		Hence, Lemma \ref{lem sat vort} applies and proves that this is a stationary solution.\\
		$\blacktriangleright$ Notice that the constraint \eqref{constraint omega NS} is required since \eqref{null avrg vort} and \eqref{conv int S2} imply
		\begin{align*}
			0=\int_{\mathbb{S}^2}\Omega_{\textnormal{\tiny{FC}}}(t,\xi)d\sigma(\xi)&=\int_{0}^{2\pi}\int_{0}^{\pi}\Omega_{\textnormal{\tiny{FC}}}(t,\theta,\varphi)\sin(\theta)d\theta d\varphi\\
			&=2\pi\left(\omega_{N}\int_{0}^{\theta_0}\sin(\theta)d\theta+\omega_{S}\int_{\theta_0}^{\pi}\sin(\theta)d\theta\right)\\
			&=2\pi\Big[\omega_{N}\big(1-\cos(\theta_0)\big)+\omega_{S}\big(1+\cos(\theta_0)\big)\Big].
		\end{align*}
		$\blacktriangleright$ The potential velocity solves the elliptic equation
		$$\Delta\Psi_{\textnormal{\tiny{FC}}}=\overline{\Omega}_{\textnormal{\tiny{FC}}}+2\widetilde{\gamma}\cos(\theta),\qquad\textnormal{i.e.}\qquad\partial_{\theta}\big[\sin(\theta)\partial_{\theta}\Psi_{\textnormal{\tiny{FC}}}(\theta)\big]=\sin(\theta)\Big(\omega_{N}\mathbf{1}_{0<\theta<\theta_0}+\omega_{S}\mathbf{1}_{\theta_0\leqslant\theta<\pi}\Big)+\widetilde{\gamma}\sin(2\theta).$$
		Integrating the previous relation gives
		$$\partial_{\theta}\Psi_{\textnormal{\tiny{FC}}}(\theta)=\begin{cases}
			\frac{\omega_{N}}{\sin(\theta)}\big(1-\cos(\theta)\big)-\frac{\widetilde{\gamma}\cos(2\theta)}{2\sin(\theta)}+\frac{c}{\sin(\theta)}, & \textnormal{if }\theta\in(0,\theta_0),\\
			\frac{\omega_{N}}{\sin(\theta)}\big(1-\cos(\theta_0)\big)+\frac{\omega_S}{\sin(\theta)}\big(\cos(\theta_0)-\cos(\theta)\big)-\frac{\widetilde{\gamma}\cos(2\theta)}{2\sin(\theta)}+\frac{c}{\sin(\theta)}, & \textnormal{if }\theta\in[\theta_0,\pi).
		\end{cases}$$
		Since the flow is zonal, there is no velocity at the pole.  Which implies that
		$$\lim_{\theta\to0^+}\partial_{\theta}\Psi_{\textnormal{\tiny{FC}}}(\theta)=0.$$
		As a consequence, we must take the constant of integration $c$ as follows
		$$c\triangleq\frac{\widetilde{\gamma}}{2}\cdot$$
		Finally, using \eqref{constraint omega NS}, we can write
		$$\partial_{\theta}\Psi_{\textnormal{\tiny{FC}}}(\theta)=\begin{cases}
			\frac{\omega_{N}}{\sin(\theta)}\big(1-\cos(\theta)\big)-\widetilde{\gamma}\sin(\theta), & \textnormal{if }\theta\in(0,\theta_0),\\
			-\frac{\omega_S}{\sin(\theta)}\big(1+\cos(\theta)\big)-\widetilde{\gamma}\sin(\theta), & \textnormal{if }\theta\in[\theta_0,\pi).
		\end{cases}$$
		At $\theta=\theta_0,$ using again \eqref{constraint omega NS}, we find
		$$\partial_{\theta}\Psi_{\textnormal{\tiny{FC}}}(\theta_0)=\frac{1}{2\sin(\theta_0)}\Big[\omega_{N}\big(1-\cos(\theta_0)\big)-\omega_{S}\big(1+\cos(\theta_0)\big)\Big]-\widetilde{\gamma}\sin(\theta_0)=\left(\frac{\omega_{N}-\omega_{S}}{2}-\widetilde{\gamma}\right)\sin(\theta_0).$$
		The proof of Lemma \ref{lem flat cap} is now complete.
	\end{proof}
	
	\begin{figure}[!h]
		\begin{center}
			\tdplotsetmaincoords{80}{100}
			\begin{tikzpicture}[tdplot_main_coords,scale=3]
				\draw[thick,->] (0,0,0) -- (3,0,0) node[anchor=north east]{$x$};
				\draw[thick,->] (0,0,0) -- (0,1.2,0) node[anchor=north west]{$y$};
				\draw[thick,->] (0,0,0) -- (0,0,1.2) node[anchor=south]{$z$};
				
				\def\h{1.1}
				\coordinate (O) at (0,0,0);
				\coordinate (z) at (0,0,\h);
				\coordinate (x) at (\h,0,0);
				\coordinate (y) at (0,\h,0);
				\coordinate (a) at (0.6,0.6,0.53);
				\coordinate (b) at (0.6,0.6,0);

				\draw[gray!100,dashed] (0,0,0.6) ellipse(0.785cm and 0.15cm);
				\draw[gray!100,dashed] (0,0,0)--(0.55,0.45,0.55);

				\begin{pgfonlayer}{foreground}
					\draw[->] (z)+(-40:0.25) arc (-40:170:0.25);
				\end{pgfonlayer}
				\node at (0,0.3,1.15) {$\widetilde{\gamma}$};
				\node at (0,0.2,0.9) {$\omega_N(f)$};
				\node at (1,0.5,0) {$\omega_S$};
				\node at (0,0.1,0.3) {$\textcolor{gray!100}{\theta_0}$};
				
				\draw[->,>=stealth,gray!100] (0,0,0.2) arc [start angle=0,end angle=45,x radius=-0.3,y radius=0.22];
				
				\tikzmath{function absci(\t) {return sin(54+5*cos(6*\t r))*cos(\t r);};}
				\tikzmath{function ord(\t) {return sin(54+5*cos(6*\t r))*sin(\t r);};}
				\tikzmath{function cote(\t) {return cos(54+5*cos(6*\t r));};}
				
				\draw[red,thick] plot[domain=0:6.3,smooth,variable=\t,samples=500] ({absci(\t)},{ord(\t)},{cote(\t)});

				\shade[tdplot_screen_coords, ball color = gray!40, opacity=0.4] (0,0) circle (1);
				
			\end{tikzpicture}
			\qquad
			\tdplotsetmaincoords{0}{0}
			\begin{tikzpicture}[tdplot_main_coords,scale=3]
				\draw[thick,->] (0.035,0,0) -- (1.2,0,0) node[anchor=north east]{$x$};
				\draw[thick,->] (0,0.035,0) -- (0,1.2,0) node[anchor=north west]{$y$};
				\draw[thick] (0,0,0) -- (0,0,1.2) node[anchor=east]{$z$};
				
				\def\h{1.1}
				\coordinate (O) at (0,0,0);
				\coordinate (z) at (0,0,\h);
				\coordinate (x) at (\h,0,0);
				\coordinate (y) at (0,\h,0);
				\coordinate (a) at (0.6,0.6,0.53);
				\coordinate (b) at (0.6,0.6,0);

				\draw[gray!100,dashed] (0,0,0) ellipse(0.8cm and 0.8cm);
				\node at (0,0,0) {$\odot$};

				\begin{pgfonlayer}{foreground}
					\draw[->] (z)+(-40:0.25) arc (-40:170:0.25);
				\end{pgfonlayer}
				\node at (-0.2,0.25,0) {$\widetilde{\gamma}$};

				\tikzmath{function absci(\t) {return sin(54+5*cos(6*\t r))*cos(\t r);};}
				\tikzmath{function ord(\t) {return sin(54+5*cos(6*\t r))*sin(\t r);};}
				\tikzmath{function cote(\t) {return cos(54+5*cos(6*\t r));};}
				
				\draw[red,thick] plot[domain=0:6.3,smooth,variable=\t,samples=500] ({absci(\t)},{ord(\t)},{cote(\t)});

				\shade[tdplot_screen_coords, ball color = gray!40, opacity=0.4] (0,0) circle (1);
			\end{tikzpicture}
			\caption{Representation of one interface (in red) vortex cap solutions with $6$-fold symmetry.}
		\end{center}
	\end{figure}
	
	Now, fix $\theta_0\in(0,\pi),$ $\omega_N,\omega_S\in\mathbb{R}$ satisfying \eqref{constraint omega NS} and let us consider a vortex cap solution close to $\overline{\Omega}_{\textnormal{\tiny{FC}}}$ in the form
	$$\overline{\Omega}(t,\theta,\varphi)=\omega_N(f)\mathbf{1}_{0<\theta<\theta_0+f(t,\varphi)}+\omega_{S}\mathbf{1}_{\theta_0+f(t,\varphi)\leqslant\theta<\pi},\qquad|f(t,\varphi)|\ll1.$$
	The quantity $\omega_N(f)$ is defined implicitely so that the Gauss constraint is satisfied,namely
	\begin{align*}
		0&=\int_{\mathbb{S}^2}\overline{\Omega}(t,\xi)d\sigma(\xi)\\
		&=\int_{0}^{2\pi}\left(\omega_N(f)\int_{0}^{\theta_0+f(t,\varphi)}\sin(\theta)d\theta+\omega_S\int_{\theta_0+f(t,\varphi)}^{\pi}\sin(\theta)d\theta\right)d\varphi\\
		&=\int_{0}^{2\pi}\omega_N(f)\big(1-\cos(\theta_0+f(t,\varphi)\big)+\omega_S\big(1+\cos(\theta_0+f(t,\varphi)\big)d\varphi.
	\end{align*}
	To make the argument rigorous, we define, for any reasonable Banach algebra functional space $X$ whose elements have zero space-average, the functional
	$$\mathcal{G}:\begin{array}[t]{rcl}
		X\times\mathbb{R} & \rightarrow & \mathbb{R}\\
		(f,\omega) & \mapsto & \int_{0}^{2\pi}\omega\big(1-\cos(\theta_0+f(t,\varphi)\big)+\omega_S\big(1+\cos(\theta_0+f(t,\varphi)\big)d\varphi. 
	\end{array}$$
	The precise function spaces will be given in the next section. The application $\mathcal{G}$ is clearly smooth and, using that $\theta_0\in(0,\pi)$,
	$$\mathcal{G}(0,\omega_N)=0,\qquad(\partial_{\omega}\mathcal{G})(0,\omega_N)=2\pi\big(1-\cos(\theta_0)\big)\neq0.$$
	Therefore, one can apply the classical implicit function theorem to get the existence of a smooth functional $\omega_N(\cdot):B_{X}(r_0)\rightarrow\mathbb{R}$ such that
	\begin{equation}\label{omgN at 0}
		\forall f\in B_{X}(r_0),\quad\mathcal{G}\big(f,\omega_N(f)\big)=0,\qquad\omega_N(0)=\omega_N.
	\end{equation}
	We have denoted $B_{X}(r_0)$ the ball in $X$ of center $0$ and small enough radius $r_0>0$. Differentiating with respect to $f$ in any direction $h\in X$ (which has zero average), we get
	\begin{align*}
		d_f\omega_N(0)[h]&=-\frac{(\partial_{f}\mathcal{G})(0,\omega_N)[h]}{(\partial_{\omega}\mathcal{G})(0,\omega_N)}\\
		&=\frac{\sin(\theta_0)(\omega_N-\omega_S)}{2\pi\big(1-\cos(\theta_0)\big)}\int_{0}^{2\pi}h(t,\varphi)d\varphi=0.
	\end{align*}
	Therefore,
	\begin{equation}\label{diff omgN}
		d_{f}\omega_N(0)\equiv0.
	\end{equation}
	Let us mention that one could also fix $\omega_N$ and make $\omega_S$ vary.
	The evolution is given by the dynamics of the interface which can be described (in the colatitude/longitude chart) through the following parametrization
	$$z(t,\varphi)=C_{1}\big(\theta_0+f(t,\varphi),\varphi\big)=\begin{pmatrix}
		\sin\big(\theta_0+f(t,\varphi)\big)\cos(\varphi)\\
		\sin\big(\theta_0+f(t,\varphi)\big)\sin(\varphi)\\
		\cos\big(\theta_0+f(t,\varphi)\big)
	\end{pmatrix}.$$
	Differentiating in time amounts to
	$$\partial_{t}z(t,\varphi)=\partial_{t}f(t,\varphi)\begin{pmatrix}
		\cos\big(\theta_0+f(t,\varphi)\big)\cos(\varphi)\\
		\cos\big(\theta_0+f(t,\varphi)\big)\sin(\varphi)\\
		-\sin\big(\theta_0+f(t,\varphi)\big)
	\end{pmatrix},$$
	and the derivation in the longitude variable gives
	$$\partial_{\varphi}z(t,\varphi)=\partial_{\varphi}f(t,\varphi)\begin{pmatrix}
		\cos\big(\theta_0+f(t,\varphi)\big)\cos(\varphi)\\
		\cos\big(\theta_0+f(t,\varphi)\big)\sin(\varphi)\\
		-\sin\big(\theta_0+f(t,\varphi)\big)
	\end{pmatrix}+\begin{pmatrix}
		-\sin\big(\theta_0+f(t,\varphi)\big)\sin(\varphi)\\
		\sin\big(\theta_0+f(t,\varphi)\big)\cos(\varphi)\\
		0
	\end{pmatrix}.$$
	The vector $J\partial_{\varphi}z(t,\varphi)$ can be obtained using the cross product
	\begin{align*}
		J\partial_{\varphi}z(t,\varphi)&=\partial_{\varphi}z(t,\varphi)\times z(t,\varphi)\\
		&=\partial_{\varphi}f(t,\varphi)\begin{pmatrix}
			\sin(\varphi)\\
			-\cos(\varphi)\\
			0
		\end{pmatrix}+\begin{pmatrix}
			\cos\big(\theta_0+f(t,\varphi)\big)\sin\big(\theta_0+f(t,\varphi)\big)\cos(\varphi)\\
			\cos\big(\theta_0+f(t,\varphi)\big)\sin\big(\theta_0+f(t,\varphi)\big)\sin(\varphi)\\
			-\sin^2\big(\theta_0+f(t,\varphi)\big)
		\end{pmatrix}.
	\end{align*}
	Therefore,
	$$\partial_{t}z(t,\varphi)\cdot\big(J\partial_{\varphi}z(t,\varphi)\big)=\sin\big(\theta_0+f(t,\varphi)\big)\partial_{t}f(t,\varphi).$$
	Our ansatz corresponds to a vorticity in the form
	$$\Omega(t,\theta,\varphi)=\omega_{N}(f)\mathbf{1}_{0<\theta<\theta_0+f(t,\varphi)}+\omega_{S}\mathbf{1}_{\theta_0+f(t,\varphi)\leqslant\theta<\pi}+2\widetilde{\gamma}\cos(\theta).$$
	Consequently, according to \eqref{psi-G}, \eqref{Green sph}, \eqref{def D} and \eqref{null avrg vort}, we have
	\begin{align*}
		\Psi\big(t,z(t,\varphi)\big)&=\frac{1}{4\pi}\int_{0}^{2\pi}\int_{0}^{\pi}\log\Big(D\big(\theta_0+f(t,\varphi),\theta',\varphi,\varphi'\big)\Big)\Omega(t,\theta',\varphi')\sin(\theta')d\theta'd\varphi'\\
		&=\frac{\omega_{N}(f)}{4\pi}\int_{0}^{2\pi}\int_{0}^{\theta_0+f(t,\varphi')}\log\Big(D\big(\theta_0+f(t,\varphi),\theta',\varphi,\varphi'\big)\Big)\sin(\theta')d\theta'd\varphi'\\
		&\quad+\frac{\omega_{S}}{4\pi}\int_{0}^{2\pi}\int_{\theta_0+f(t,\varphi')}^{\pi}\log\Big(D\big(\theta_0+f(t,\varphi),\theta',\varphi,\varphi'\big)\Big)\sin(\theta')d\theta'd\varphi'\\
		&\quad+\frac{\widetilde{\gamma}}{4\pi}\int_{0}^{2\pi}\int_{0}^{\pi}\log\Big(D\big(\theta_0+f(t,\varphi),\theta',\varphi,\varphi'\big)\Big)\sin(2\theta')d\theta' d\varphi'.
	\end{align*}
	Remark that the unperturbed stream function can be written as follows
	\begin{align*}
		\Psi_{\textnormal{\tiny{FC}}}(\theta)&=\frac{1}{4\pi}\int_{0}^{2\pi}\int_{0}^{\pi}\log\Big(D\big(\theta,\theta',0,\varphi'\big)\Big)\Omega_{\textnormal{\tiny{FC}}}(\theta')\sin(\theta')d\theta'd\varphi'\\
		&=\frac{\omega_{N}}{4\pi}\int_{0}^{2\pi}\int_{0}^{\theta_0}\log\Big(D\big(\theta,\theta',0,\varphi'\big)\Big)\sin(\theta')d\theta'd\varphi'\\
		&\quad+\frac{\omega_{S}}{4\pi}\int_{0}^{2\pi}\int_{\theta_0}^{\pi}\log\Big(D\big(\theta,\theta',0,\varphi'\big)\Big)\sin(\theta')d\theta'd\varphi'\\
		&\quad+\frac{\widetilde{\gamma}}{4\pi}\int_{0}^{2\pi}\int_{0}^{\pi}\log\Big(D\big(\theta,\theta',0,\varphi'\big)\Big)\sin(2\theta')d\theta' d\varphi'.
	\end{align*}
	Thus, making appeal to Chasles' relation, we can write
	\begin{align*}
		\Psi\big(t,z(t,\varphi)\big)&=\Psi_{\textnormal{\tiny{FC}}}\big(\theta_0+f(t,\varphi)\big)+\Psi_{p}^{[1]}\{f\}\big(\theta_0+f(t,\varphi),\varphi\big)+\Psi_{p}^{[2]}\{f\}\big(\theta_0+f(t,\varphi),\varphi\big)\\
		&\triangleq\Psi\{f\}\big(\theta_0+f(t,\varphi),\varphi\big),
	\end{align*}
	where
	\begin{align*}
		\Psi_{p}^{[1]}\{f\}(\theta,\varphi)&\triangleq \frac{\omega_{N}-\omega_{S}}{4\pi}\int_{0}^{2\pi}\int_{\theta_0}^{\theta_0+f(t,\varphi')}\log\Big(D\big(\theta,\theta',\varphi,\varphi'\big)\Big)\sin(\theta')d\theta'd\varphi',\\
		\Psi_{p}^{[2]}\{f\}(\theta,\varphi)&\triangleq \frac{\omega_{N}(f)-\omega_{N}}{4\pi}\int_{0}^{2\pi}\int_{0}^{\theta_0+f(t,\varphi')}\log\Big(D\big(\theta,\theta',\varphi,\varphi'\big)\Big)\sin(\theta')d\theta'd\varphi'.
	\end{align*}
	Therefore, the vortex cap equation \eqref{vorticity cap equations} becomes
	\begin{equation}\label{vorticity cap eq general ansatz}
		\partial_{t}f(t,\varphi)=\frac{\partial_{\varphi}\Big(\Psi\{f\}\big(\theta_0+f(t,\varphi),\varphi\big)\Big)}{\sin\big(\theta_0+f(t,\varphi)\big)}\cdot
	\end{equation}
	Looking for traveling solutions at speed $c\in\mathbb{R}$ leads to consider
	$$f(t,\varphi)=f(\varphi-ct).$$
	Inserting this into \eqref{vorticity cap eq general ansatz} gives
	\begin{equation}\label{scrF}
		\mathscr{F}(c,f)(\varphi)\triangleq c\,\partial_{\varphi}f(\varphi)+\frac{\partial_{\varphi}\Big(\Psi\{f\}\big(\theta_0+f(\varphi),\varphi\big)\Big)}{\sin\big(\theta_0+f(\varphi)\big)}=0.
	\end{equation}
	Observe, using in particular \eqref{omgN at 0}, that 
	$$\forall c\in\mathbb{R},\quad\mathscr{F}(c,0)=0.$$
	That corresponds to a trivial line of roots of $\mathscr{F}$, corresponding to the flat cap stationary solution associated with the angle $\theta_0$. In order to look for non-trivial roots, we shall use bifurcation arguments in terms of the Crandall-Rabinowitz Theorem. For that goal, we shall study the regularity of $\mathscr{F}$ and the spectral properties of its linearized operator.

	\subsection{Bifurcation study}
	In this section, we shall check the hypothesis of the Crandall-Rabinowitz Theorem for the functional $\mathscr{F}$ introduced in \eqref{scrF}. Firstly, let us introduce the function spaces, in terms of Hölder regularity, that will be used in the bifurcation argument. Fix $\alpha\in(0,1)$, then the Hölder space $C^{\alpha}(\mathbb{T})$ consists in $2\pi$-periodic functions $f:\mathbb{T}\rightarrow\mathbb{R}$ such that the following norm is finite:
	$$\|f\|_{C^{\alpha}(\mathbb{T})}\triangleq\|f\|_{L^{\infty}(\mathbb{T})}+\sup_{(\varphi,\varphi')\in\mathbb{T}^2\atop\varphi\neq\varphi'}\frac{|f(\varphi)-f(\varphi')|}{|\varphi-\varphi'|^\alpha}\cdot$$
	The subspace $C^{1+\alpha}(\mathbb{T})$ of regular functions is associated with the following norm
	$$\|f\|_{C^{1+\alpha}(\mathbb{T})}\triangleq\|f\|_{L^{\infty}(\mathbb{T})}+\|\partial_{\varphi}f\|_{C^{\alpha}(\mathbb{T})}.$$
	Define also the following subspaces taking into account parity and symmetries
	\begin{align*}
		X_{\mathbf{m}}^{1+\alpha}&\triangleq\left\lbrace f\in C^{1+\alpha}(\mathbb{T})\quad\textnormal{s.t.}\quad \forall\varphi\in\mathbb{T},\,f(\varphi)=\sum_{n=1}^{\infty}f_{n}\cos(\mathbf{m}n\varphi),\quad f_n\in\mathbb{R}\right\rbrace,\\
		Y_{\mathbf{m}}^{\alpha}&\triangleq\left\lbrace g\in C^{\alpha}(\mathbb{T})\quad\textnormal{s.t.}\quad \forall\varphi\in\mathbb{T},\,g(\varphi)=\sum_{n=1}^{\infty}g_{n}\sin(\mathbf{m}n\varphi),\quad g_n\in\mathbb{R}\right\rbrace,\\
		B_{r,\mathbf{m}}^{1+\alpha}&\triangleq\Big\{f\in X_{\mathbf{m}}^{1+\alpha}\quad\textnormal{s.t.}\quad\|f\|_{C^{1+\alpha}(\mathbb{T})}<r\Big\},\qquad r>0.
	\end{align*}
	The next proposition gathers the regularity properties for the functional $\mathscr{F}$ and gives the structure of its linearized operator at the flat cap.
	\begin{prop}\label{propreg1}
		Let $\alpha\in(0,1)$ and $\mathbf{m}\in\mathbb{N}^*.$ There exists $r>0$ such that
		\begin{enumerate}[label=(\roman*)]
			\item The function $\mathscr{F}:\mathbb{R}\times B_{r,\mathbf{m}}^{1+\alpha}\rightarrow Y_{\mathbf{m}}^{\alpha}$ is well-defined and of class $C^1.$
			\item The partial derivative $\partial_{c}d_{f}\mathscr{F}:\mathbb{R}\times B_{r,\mathbf{m}}^{1+\alpha}\rightarrow\mathcal{L}(X_{\mathbf{m}}^{1+\alpha},Y_{\mathbf{m}}^{\alpha})$ exists and is continuous.
			\item At the equilibrium $f=0,$ the linearized operator admits the following Fourier representation
			\begin{equation}\label{split dfF2}
				d_{f}\mathscr{F}(c,0)\left[\sum_{n=1}^{\infty}h_{n}\cos(\mathbf{m}n\varphi)\right]=\sum_{n=1}^{\infty}\mathbf{m}n\left[-c-(\omega_{N}-\omega_{S})\frac{\mathbf{m}n-1}{2\mathbf{m}n}+\widetilde{\gamma}\right]h_{n}\sin(\mathbf{m}n\varphi).
			\end{equation}
			In addition, if $c\neq\tfrac{\omega_N-\omega_S}{2}-\widetilde{\gamma},$ then the operator $d_f\mathscr{F}(c,0):X_{\mathbf{m}}^{1+\alpha}\rightarrow Y_{\mathbf{m}}^{\alpha}$ is of Fredholm type with index zero.
		\end{enumerate}
	\end{prop}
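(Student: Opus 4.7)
\bigskip\noindent\textbf{Proof plan.}
The strategy is to split $\mathscr{F}$ into three pieces: the transport term $c\,\partial_\varphi f$, the contribution $\tfrac{1}{\sin(\theta_0+f)}\partial_\varphi\bigl[\Psi_{\textnormal{\tiny{FC}}}(\theta_0+f)\bigr]$ coming from the background flat cap, and the truly nonlocal contribution $\tfrac{1}{\sin(\theta_0+f)}\partial_\varphi\bigl[\Psi_p\{f\}(\theta_0+f,\varphi)\bigr]$ coming from the perturbation. Point (ii) will be immediate from the linear dependence of $\mathscr{F}$ in $c$, which gives $\partial_c d_f\mathscr{F}(c,f)[h]=\partial_\varphi h$, continuous from $X_{\mathbf{m}}^{1+\alpha}$ into $Y_{\mathbf{m}}^{\alpha}$. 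The real work is in (i) and (iii).

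\smallskip\noindent\textbf{Regularity (i).} The transport term is trivially a bounded operator $X_{\mathbf{m}}^{1+\alpha}\to Y_{\mathbf{m}}^\alpha$, and the $\Psi_{\textnormal{\tiny{FC}}}$ term is a smooth function of $f$ composed with $\varphi\mapsto\theta_0+f(\varphi)$, so both are $C^\infty$ in $f$ in some ball $B_{r,\mathbf{m}}^{1+\alpha}$ (one just needs $r$ small enough to stay away from the poles $\theta=0,\pi$). The delicate piece is $\Psi_p\{f\}$. Writing
\[
\Psi_p\{f\}(\theta,\varphi)=\tfrac{\omega_N-\omega_S}{4\pi}\int_0^{2\pi}\!\!\int_{\theta_0}^{\theta_0+f(\varphi')}\!\log D(\theta,\theta',\varphi,\varphi')\sin(\theta')\,d\theta' d\varphi',
\]
and using the factorization \eqref{expr D} of $D$, one splits $\log D=\log|\sin((\varphi-\varphi')/2)|\cdot\chi+(\text{smooth remainder})$ where the first factor is a classical Hilbert-type logarithmic kernel on $\mathbb{T}$ and the remainder is smooth since we stay away from the poles. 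Standard potential-theoretic estimates give $\Psi_p\{f\}\in C^{1+\alpha}$ in $(\theta,\varphi)$ with Hölder-continuous dependence on $f$, hence after composing with $\theta_0+f(\varphi)$, dividing by $\sin(\theta_0+f)$ and differentiating in $\varphi$, one lands in $C^\alpha$. The $C^1$ regularity in $f$ is obtained by differentiating under the integral (the integration bound and the $\theta,\varphi$-evaluation) and controlling the resulting singular integrals in the same way. The symmetry $f\in X_{\mathbf{m}}^{1+\alpha}\Rightarrow\mathscr{F}(c,f)\in Y_{\mathbf{m}}^\alpha$ follows from invariance of the kernel $D$ under simultaneous rotation and reflection $\varphi,\varphi'\mapsto-\varphi,-\varphi'$, combined with the $\tfrac{2\pi}{\mathbf{m}}$-periodicity. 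This step is the main technical obstacle.

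\smallskip\noindent\textbf{Linearization (iii).} Since $\Psi_p\{0\}\equiv 0$, the chain rule applied to $\tfrac{\partial_\varphi G(\varphi)}{\sin(\theta_0+f)}$ with $G=\Psi_{\textnormal{\tiny{FC}}}(\theta_0+f)+\Psi_p\{f\}(\theta_0+f,\varphi)$ reduces at $f=0$ to
\[
d_f\mathscr{F}(c,0)[h]=c\,h'(\varphi)+\tfrac{\partial_\theta\Psi_{\textnormal{\tiny{FC}}}(\theta_0)}{\sin\theta_0}\,h'(\varphi)+\tfrac{1}{\sin\theta_0}\partial_\varphi\bigl(d_f\Psi_p\{0\}[h](\theta_0,\varphi)\bigr).
\]
By \eqref{dttPsi tt0} the first two terms combine into $\bigl(c+\tfrac{\omega_N-\omega_S}{2}-\widetilde\gamma\bigr)h'$. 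For the third, evaluation at $\theta=\theta_0$ and formula \eqref{expr D} yield $\log D(\theta_0,\theta_0,\varphi,\varphi')=\log(2\sin^2\theta_0)+2\log|\sin((\varphi-\varphi')/2)|$; since $h$ has zero mean, the constant part drops, leaving a convolution with $\log|\sin(\cdot/2)|$. The classical identity
\[
\int_0^{2\pi}\cos(k\varphi')\log\bigl|\sin\tfrac{\varphi-\varphi'}{2}\bigr|\,d\varphi'=-\tfrac{\pi}{k}\cos(k\varphi),\qquad k\geqslant 1,
\]
then produces, for $h(\varphi)=\cos(\mathbf{m}n\varphi)$, the contribution $\tfrac{\omega_N-\omega_S}{2}\sin(\mathbf{m}n\varphi)$. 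Adding the three pieces and factoring out $\mathbf{m}n$ gives exactly \eqref{split dfF2}.

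\smallskip\noindent\textbf{Fredholm property.} The eigenvalues $\mu_n=\mathbf{m}n\bigl[-c-(\omega_N-\omega_S)\tfrac{\mathbf{m}n-1}{2\mathbf{m}n}+\widetilde\gamma\bigr]$ satisfy $\mu_n/(\mathbf{m}n)\to -c-\tfrac{\omega_N-\omega_S}{2}+\widetilde\gamma$, which is nonzero exactly when $c\neq\tfrac{\omega_N-\omega_S}{2}-\widetilde\gamma$. Thus $d_f\mathscr{F}(c,0)$ differs from the Fourier multiplier $h\mapsto\bigl(-c-\tfrac{\omega_N-\omega_S}{2}+\widetilde\gamma\bigr)\partial_\varphi h$, which is an isomorphism $X_{\mathbf{m}}^{1+\alpha}\to Y_{\mathbf{m}}^\alpha$, by a smoothing (hence compact) operator whose Fourier coefficients decay like $1/n$. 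Fredholmness with index zero follows from the stability of Fredholm index under compact perturbation.
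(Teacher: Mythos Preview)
Your overall strategy matches the paper's: split $\mathscr{F}$ into transport, flat-cap, and nonlocal pieces; compute the linearization at $f=0$ using the explicit value of the $\log$-kernel convolution; and obtain Fredholmness by writing $d_f\mathscr{F}(c,0)$ as an isomorphism plus a compact perturbation. Parts (ii) and the derivation of \eqref{split dfF2} are correct and essentially identical to the paper's argument (the paper packages your ``classical identity'' as Lemma~\ref{lem int} with $a=b=\theta_0$).

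There is, however, a genuine gap in your Fredholm argument. You write that $d_f\mathscr{F}(c,0)$ differs from $(-c-\tfrac{\omega_N-\omega_S}{2}+\widetilde\gamma)\partial_\varphi$ by ``a smoothing operator whose Fourier coefficients decay like $1/n$''. This is false: the difference is exactly $\tfrac{\omega_N-\omega_S}{2}\mathcal{H}$, whose Fourier multiplier is the \emph{constant} $\tfrac{\omega_N-\omega_S}{2}$, not $O(1/n)$. The Hilbert transform is an order-zero operator, not a smoothing one. Moreover, even if the multiplier did decay, decay of Fourier symbols alone does not yield compactness in H\"older spaces without further work. The paper closes this gap by writing $\mathcal{H}h(\varphi)=\tfrac{1}{2\pi}\int\log|\sin(\tfrac{\varphi-\varphi'}{2})|\,\partial_{\varphi'}h(\varphi')\,d\varphi'$, using the kernel estimate (Proposition~\ref{prop-potentialtheory}) to get $\mathcal{H}:C^{1+\alpha}\to C^\beta$ bounded for every $\beta\in(\alpha,1)$, and then invoking the compact embedding $C^\beta\hookrightarrow C^\alpha$. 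You should replace your ``Fourier decay'' sentence by an argument of this type.

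A smaller point on (i): your proposed splitting $\log D=\log|\sin((\varphi-\varphi')/2)|\cdot\chi+\text{(smooth)}$ is heuristically right on the diagonal but does not literally hold once the boundary is perturbed, since the $\theta$-variable in $D$ also depends on $\varphi,\varphi'$ through $f$. The paper avoids any such decomposition and instead works directly with pointwise bounds on the kernels (and their $\varphi$-derivatives) together with Propositions~\ref{prop-potentialtheory}--\ref{prop-potentialtheory-2}; the computations are lengthy but elementary. Your sketch captures the correct mechanism, but as written it would not survive a careful check.
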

	\begin{proof}

		\medskip \noindent
		\textbf{(i)}
		First, notice that the oddness and $\mathbf{m}$-fold properties follow from the evenness and $\mathbf{m}$-fold properties of $f$ and changes of variables in the non-local part. Now, we need to check that $\mathscr{F}(c,f)$ belongs to $C^{\alpha}(\mathbb{T})$ provided that $f\in C^{1+\alpha}(\mathbb{T})$. Then, let us write $\mathscr{F}$ as
		$$\mathscr{F}(c,f)(\varphi)=c f'(\varphi)+ f'(\varphi)\frac{(\partial_\theta \Psi\{f\})\big(\theta_0+f(\varphi),\varphi\big)}{\sin\big(\theta_0+f(\varphi)\big)}+\frac{(\partial_\varphi \Psi\{f\})\big(\theta_0+f(\varphi),\varphi\big)}{\sin\big(\theta_0+f(\varphi)\big)}\cdot$$ 
		Notice that since $\theta_0\notin\{0,\pi\}$ and $\|f\|_{C^{1+\alpha}(\mathbb{T})}<r$, hence by considering $r$ small enough we find
		\begin{equation}\label{lower bound sin th0f}
			\inf_{\varphi\in\mathbb{T}}\big|\sin\big(\theta_0+f(\varphi)\big)\big|\geqslant\delta_0>0,\qquad\delta_{0}\triangleq\inf_{x\in[\theta_0-r,\theta_0+r]}|\sin(x)|\in(0,1).
		\end{equation}
		Thus, in order to check that $\mathscr{F}$ is well-defined, it is enough to prove that
		\begin{equation}\label{condition-welldef}
			(\partial_\theta \Psi\{f\})\big(\theta_0+f(\cdot),\cdot\big)\in C^{\alpha}(\T)\quad\textnormal{and}\quad (\partial_\varphi \Psi\{f\})\big(\theta_0+f(\cdot),\cdot\big)\in C^{\alpha}(\T).
		\end{equation}
		Note that proving \eqref{condition-welldef} uses the same techniques as the one used to show \eqref{condition-c1}. Thus, we shall skip the details and only  check that $f\mapsto (d_f \mathscr{F})$ is continuous using the expression in \eqref{scrF}. Indeed, we can compute the Gateaux derivative of $\mathscr{F}$ and obtain
		\begin{align*}
			d_{f}\mathscr{F}(c,f)[h](\varphi)&=c\,h'(\varphi)-h(\varphi)\frac{\cos\big(\theta_0+f(\varphi)\big)}{\sin^{2}\big(\theta_0+f(\varphi)\big)}\partial_{\varphi}\Big(\Psi\{f\}\big(\theta_0+f(\varphi),\varphi\big)\Big)\\
			&\quad+\frac{1}{\sin\big(\theta_0+f(\varphi)\big)}\partial_{\varphi}\Big(h(\varphi)(\partial_{\theta}\Psi\{f\})\big(\theta_0+f(\varphi),\varphi\big)\Big)\\
			&\quad+\frac{1}{\sin\big(\theta_0+f(\varphi)\big)}\partial_{\varphi}\Big(\big(d_{f}\Psi\{f\}[h]\big)\big(\theta_0+f(\varphi),\varphi\big)\Big),
		\end{align*}
		with
		\begin{align*}
			\big(d_{f}\Psi\{f\}[h]\big)\big(\theta_0+f(\varphi),\varphi\big)&=\big(d_{f}\Psi_p^{[1]}\{f\}[h]\big)\big(\theta_0+f(\varphi),\varphi\big)+\big(d_{f}\Psi_p^{[2]}\{f\}[h]\big)\big(\theta_0+f(\varphi),\varphi\big),\\
			\big(d_{f}\Psi_p^{[1]}\{f\}[h]\big)\big(\theta_0+f(\varphi),\varphi\big)&=\frac{\omega_{N}-\omega_{S}}{4\pi}\int_{0}^{2\pi}h(\varphi')\log\Big( D\big(\theta_0+f(\varphi),\theta_0+f(\varphi'),\varphi,\varphi'\big)\Big)\sin\big(\theta_0+f(\varphi')\big)d\varphi',\\
			\big(d_{f}\Psi_p^{[2]}\{f\}[h]\big)\big(\theta_0+f(\varphi),\varphi\big)&=\frac{d_{f}\omega_N(f)[h]}{4\pi}\int_{0}^{2\pi}\int_{0}^{\theta_0}\log\Big(D\big(\theta_0+f(\varphi),\theta',\varphi,\varphi'\big)\Big)\sin(\theta')d\theta'd\varphi'\\
			&\quad+\frac{\omega_N(f)-\omega_N}{4\pi}\int_{0}^{2\pi}\log\Big(D\big(\theta_0+f(\varphi),\theta_0+f(\varphi'),\varphi,\varphi'\big)\Big)\sin\big(\theta_0+f(\varphi')\big)d\varphi'
		\end{align*}
		and
		\begin{align}
			(\partial_{\theta}\Psi\{f\})(\theta,\varphi)=&\frac{\omega_{N}(f)}{4\pi}\int_{0}^{2\pi}\int_{0}^{\theta_0+f(\varphi')}\frac{\partial_\theta D(\theta,\theta',\varphi,\varphi')\sin(\theta')}{D(\theta,\theta',\varphi,\varphi')}d\theta'd\varphi'\nonumber\\
			&+\frac{\omega_{S}}{4\pi}\int_{0}^{2\pi}\int_{\theta_0+f(\varphi')}^{\pi}\frac{\partial_\theta D(\theta,\theta',\varphi,\varphi')\sin(\theta')}{D(\theta,\theta',\varphi,\varphi')}d\theta'd\varphi'\nonumber\\
			&+\frac{\widetilde{\gamma}}{4\pi}\int_{0}^{2\pi}\int_{0}^{\pi}\frac{\partial_\theta D(\theta,\theta',\varphi,\varphi')\sin(2\theta')}{D(\theta,\theta',\varphi,\varphi')}d\theta'd\varphi'.\label{dthPsi}
		\end{align}
		Notice that $d_f \mathscr{F}(c,f)$ is continuous in $f$ provided that the functions
		\begin{equation}\label{condition-c1}
			f\mapsto \partial_\varphi \Big((\partial_\theta \Psi)\{f\}\big(\theta_0+f(\varphi), \varphi\big)\Big) \quad\textnormal{and}\quad f\mapsto \partial_\varphi \Big(d_f \Psi\{f\}[h]\big(\theta_0+f(\varphi),\varphi\big)\Big)
		\end{equation}
		are continuous.  Let us start with the first condition in \eqref{condition-c1} and, since the analysis is similar, we shall only give details for just one of the terms. Note that from \eqref{dthPsi}, we can write
		\begin{align*}
			(\partial_\theta \Psi\{f\})\big(\theta_0+f(\varphi),\varphi\big)=&\frac{\omega_{N}(f)}{4\pi}\int_{0}^{2\pi}\int_{0}^{\theta_0+f(\varphi')}\frac{\partial_\theta D\big(\theta_0+f(\varphi),\theta',\varphi,\varphi'\big)\sin(\theta')}{D\big(\theta_0+f(\varphi),\theta',\varphi,\varphi'\big)}d\theta'd\varphi'\\
			&+\frac{\omega_{S}}{4\pi}\int_{0}^{2\pi}\int_{\theta_0+f(\varphi')}^{\pi}\frac{\partial_\theta D\big(\theta_0+f(\varphi),\theta',\varphi,\varphi'\big)\sin(\theta')}{D\big(\theta_0+f(\varphi),\theta',\varphi,\varphi'\big)}d\theta'd\varphi'\\
			&+\frac{\widetilde{\gamma}}{4\pi}\int_{0}^{2\pi}\int_{0}^{\pi}\frac{\partial_\theta D\big(\theta_0+f(\varphi),\theta',\varphi,\varphi')\sin(2\theta'\big)}{D\big(\theta_0+f(\varphi\big),\theta',\varphi,\varphi')}d\theta'd\varphi'\\
			\triangleq & J_1\{f\}(\varphi)+J_2\{f\}(\varphi)+J_3\{f\}(\varphi).
		\end{align*}
		We focus on the first term $J_1$. Note that to check that the first condition in \eqref{condition-c1} is satisfied, we need to compute $\partial_\varphi J_1\{f\}$. However, let us simplify $J_1$ before differentiating. Observe that taking the derivative of \eqref{expr D} leads to
		\begin{align}
			\partial_{\theta}D(\theta,\theta',\varphi,\varphi')&=2\Big[\sin\left(\tfrac{\theta-\theta'}{2}\right)\cos\left(\tfrac{\theta-\theta'}{2}\right)+\cos(\theta)\sin(\theta')\sin^{2}\left(\tfrac{\varphi-\varphi'}{2}\right)\Big],\label{dDth}\\
			\partial_{\theta'}D(\theta,\theta',\varphi,\varphi')&=2\Big[-\sin\left(\tfrac{\theta-\theta'}{2}\right)\cos\left(\tfrac{\theta-\theta'}{2}\right)+\sin(\theta)\cos(\theta')\sin^{2}\left(\tfrac{\varphi-\varphi'}{2}\right)\Big].\label{dDthp}
		\end{align}
		Hence,
		\begin{align*}
			(\partial_{\theta}D+\partial_{\theta'}D)(\theta,\theta',\varphi,\varphi')&=2\big[\cos(\theta)\sin(\theta')+\sin(\theta)\cos(\theta')\big]\sin^2\left(\tfrac{\varphi-\varphi'}{2}\right)\\
			&=2\sin(\theta+\theta')\sin^2\left(\tfrac{\varphi-\varphi'}{2}\right).
		\end{align*}
		Thus, adding and substracting $\partial_{\theta'}D$ appropriately and integrating by parts, we find
		\begin{align*}
			J_1\{f\}(\varphi)=&\frac{\omega_{N}(f)}{4\pi}\int_{0}^{2\pi}\int_{0}^{\theta_0+f(\varphi')}\frac{\sin^2\left(\frac{\varphi-\varphi'}{2}\right)\sin\big(\theta'+\theta_0+f(\varphi)\big)}{D(\theta_0+f(\varphi),\theta',\varphi,\varphi')}\sin(\theta')d\theta'd\varphi'\\
			&-\frac{\omega_{N}(f)}{4\pi}\int_{0}^{2\pi}\int_{0}^{\theta_0+f(\varphi')}\frac{\partial_{\theta'} D\big(\theta_0+f(\varphi),\theta',\varphi,\varphi'\big)}{D\big(\theta_0+f(\varphi),\theta',\varphi,\varphi'\big)}\sin(\theta')d\theta'd\varphi'\\
			=&\frac{\omega_{N}(f)}{2\pi}\int_{0}^{2\pi}\int_{0}^{\theta_0+f(\varphi')}\frac{\sin^2\left(\frac{\varphi-\varphi'}{2}\right)\sin\big(\theta'+\theta_0+f(\varphi)\big)}{D\big(\theta_0+f(\varphi),\theta',\varphi,\varphi'\big)}\sin(\theta')d\theta'd\varphi'\\
			&-\frac{\omega_{N}(f)}{4\pi}\int_{0}^{2\pi}\log\Big(D\big(\theta_0+f(\varphi),\theta_0+f(\varphi'),\varphi,\varphi'\big)\Big)\sin\big(\theta_0+f(\varphi')\big)d\varphi'\\
			&+\frac{\omega_{N}(f)}{4\pi}\int_{0}^{2\pi}\int_{0}^{\theta_0+f(\varphi')}\log\Big( D\big(\theta_0+f(\varphi),\theta',\varphi,\varphi'\big)\Big)\cos(\theta')d\theta'd\varphi'\\
			\triangleq & J_{1,1}\{f\}(\varphi)+J_{1,2}\{f\}(\varphi)+J_{1,3}\{f\}(\varphi).
		\end{align*}
		Let us work with $J_{1,1}\{f\}$, which is the most singular term. Now, we differentiate in $\varphi$ and obtain
		\begin{align*}
			\partial_\varphi& J_{1,1}\{f\}(\varphi)=\frac{\omega_{N}(f)}{2\pi}\int_{0}^{2\pi}\int_{0}^{\theta_0+f(\varphi')}\frac{f'(\varphi)\cos\big(\theta'+\theta_0+f(\varphi)\big)\sin^{2}\left(\frac{\varphi-\varphi'}{2}\right)}{D(\theta_0+f(\varphi),\theta',\varphi,\varphi')}\sin(\theta')d\theta'd\varphi'\\
			&+\frac{\omega_{N}(f)}{4\pi}\int_{0}^{2\pi}\int_{0}^{\theta_0+f(\varphi')}\frac{\sin\big(\theta'+\theta_0+f(\varphi)\big)\sin(\varphi-\varphi')}{D(\theta_0+f(\varphi),\theta',\varphi,\varphi')}\sin(\theta')d\theta'd\varphi'\\
			&-\frac{\omega_{N}(f)}{2\pi}\int_{0}^{2\pi}\int_{0}^{\theta_0+f(\varphi')}\frac{\sin\big(\theta'+\theta_0+f(\varphi)\big)\sin^{2}\left(\frac{\varphi-\varphi'}{2}\right)}{D^2\big(\theta_0+f(\varphi),\theta',\varphi,\varphi'\big)}\partial_{\varphi} \Big(D\big(\theta_0+f(\varphi),\theta',\varphi,\varphi'\big)\Big)\sin(\theta')d\theta'd\varphi'\\
			\triangleq & J_{1,1,1}\{f\}(\varphi)+J_{1,1,2}\{f\}(\varphi)+J_{1,1,3}\{f\}(\varphi).
		\end{align*}
		Notice that the most singular integral is $J_{1,1,3}$.  Thus, we only deal with that term. Remark that
		\begin{align*}
			\partial_{\varphi} \Big(D\big(\theta_0+f(\varphi),\theta',\varphi,\varphi'\big)\Big)&=f'(\varphi)\partial_{\theta}D\big(\theta_0+f(\varphi),\theta',\varphi,\varphi'\big)+\partial_{\varphi}D\big(\theta_0+f(\varphi),\theta',\varphi,\varphi'\big)\\
			&=2f'(\varphi)\Big[\tfrac{1}{2}\sin\big(\theta_0+f(\varphi)-\theta'\big)+\cos\big(\theta_0+f(\varphi)\big)\sin(\theta')\sin^2\left(\tfrac{\varphi-\varphi'}{2}\right)\Big]\\
			&\quad+\sin\big(\theta_0+f(\varphi)\big)\sin(\theta')\sin(\varphi-\varphi').
		\end{align*}
		We can make the change of variables $\theta'=t(\theta_0+f(\varphi'))$ to simplify the integral obtaining
		$$J_{1,1,3}\{f\}(\varphi)=-\frac{1}{2\pi}\int_{0}^{2\pi}\int_{0}^{1}\mathbb{K}\{f\}(t,\varphi,\varphi')dtd\varphi',$$
		where
		\begin{align*}
			\mathbb{K}\{f\}(t,\varphi,\varphi')&\triangleq\omega_{N}(f)\frac{\sin\big((1+t)\theta_0+f(\varphi)+tf(\varphi')\big)\sin^2\left(\frac{\varphi-\varphi'}{2}\right)}{D^2\big(\theta_0+f(\varphi),t(\theta_0+f(\varphi')),\varphi,\varphi'\big)}\sin\big(t(\theta_0+f(\varphi'))\big)\big(\theta_0+f(\varphi')\big)\\
			&\times\Big\{2f'(\varphi)\Big[\tfrac{1}{2}\sin\big((1-t)\theta_0+f(\varphi)-tf(\varphi')\big)+\cos\big(\theta_0+f(\varphi)\big)\sin\big(t(\theta_0+f(\varphi'))\big)\sin^2\left(\tfrac{\varphi-\varphi'}{2}\right)\Big]\\
			&\quad+\sin\big(\theta_0+f(\varphi)\big)\sin\big(t(\theta_0+f(\varphi))\big)\sin(\varphi-\varphi')\Big\}.
		\end{align*}
		Our goal is to check that $f\mapsto J_{1,1,3}\{f\}$ is continuous. For this aim, we take $f_1, f_2\in B_{r,m}^{1+\alpha}$ and we estimate the difference at those points
		\begin{align*}
			J_{1,1,3}\{f_2\}(\varphi)-J_{1,1,3}\{f_1\}(\varphi).
		\end{align*}
		In order to simplify, let us illustrate one of the terms since the order of singularity is the same at every term. For that, define
		$$\tilde{J}_{1,1,3}\{f_1,f_2\}(\varphi)\triangleq\int_{0}^{2\pi}\int_{0}^{1}\mathbb{K}\{f_1,f_2\}(t,\varphi,\varphi')dtd\varphi',$$
		where
		
		\begin{align*}
			\mathbb{K}\{f_1,f_2\}(t,\varphi,\varphi')&\triangleq\frac{\sin\big((1+t)\theta_0+f_2(\varphi)+tf_2(\varphi')\big)\sin^2\left(\frac{\varphi-\varphi'}{2}\right)}{D^2\big(\theta_0+f_2(\varphi),t(\theta_0+f_2(\varphi')),\varphi,\varphi'\big)}\sin^2\big(t(\theta_0+f_2(\varphi'))\big)\big(\theta_0+f_2(\varphi')\big)\sin(\varphi-\varphi')\\
			&\quad\times\sin\big(\theta_0+f_2(\varphi)\big)\Big[\omega_{N}(f_2)-\omega_N(f_1)\Big].
		\end{align*}
		To estimate the previous term in the H\"older space $C^\alpha(\mathbb{T})$, we use Proposition \ref{prop-potentialtheory} with kernel
		$$K(\varphi,\varphi')\triangleq\int_{0}^{1}\mathbb{K}\{f_1,f_2\}(t,\varphi,\varphi')dt.$$
		Remark that, using the smoothness of $\omega_N(\cdot)$, the mean value theorem implies
		$$\Big|\omega_{N}(f_2)-\omega_N(f_1)\Big|\leqslant C\|f_1-f_2\|_{L^{\infty}(\mathbb{T})}.$$ 
		Now, we choose 
		$$\begin{cases}
			r<\tfrac{1}{2}\min(\theta_0,|\theta_0-\pi|), & \textnormal{if }\theta_0\neq\tfrac{\pi}{2},\\
			r<\tfrac{\pi}{4}, & \textnormal{if } \theta_0=\tfrac{\pi}{2}
		\end{cases}$$
		and denote 
		$$\mathtt{m}_{\theta_0}(r)\triangleq\begin{cases}
			\theta_0-r, & \textnormal{if }\theta_0\leqslant\tfrac{\pi}{2},\\
			\theta_0+r, & \textnormal{if }\theta_0>\tfrac{\pi}{2},
		\end{cases}\qquad \mathtt{M}_{\theta_0}(r)\triangleq\begin{cases}
			\theta_0-r, & \textnormal{if }\theta_0>\tfrac{\pi}{2},\\
			1, & \textnormal{if }\theta_0=\tfrac{\pi}{2},\\
			\theta_0+r, & \textnormal{if }\theta_0<\tfrac{\pi}{2}\cdot
		\end{cases}$$
		Notice that $\mathtt{m}_{\theta_0}(r)\in(0,\pi)$ and $\mathtt{M}_{\theta_0}(r)\in(0,\pi)$, thus, a convexity argument ensures that
		$$\exists C_1,C_2>0,\quad\forall t\in[0,1],\quad C_1 t\leqslant\sin\big(t\,\mathtt{m}_{\theta_0}(r)\big)\leqslant C_2 t,\qquad C_1 t\leqslant\sin\big(t\,\mathtt{M}_{\theta_0}(r)\big)\leqslant C_2 t.$$
		A direct estimation gives
		\begin{align*}
			\big|K(\varphi,\varphi')\big|&\leqslant C\|f_1-f_2\|_{L^{\infty}(\mathbb{T})}\int_{0}^{1}\frac{|\sin(\varphi-\varphi')|\sin^2\left(\frac{\varphi-\varphi'}{2}\right)}{D^2\big(\theta_0+f_2(\varphi),t(\theta_0+f_2(\varphi')),\varphi,\varphi'\big)}\sin^2\big(t\,\mathtt{M}_{\theta_0}(r)\big)dt\\
			&\leqslant C\|f_1-f_2\|_{L^{\infty}(\mathbb{T})}\int_{0}^{1}\frac{|\sin(\varphi-\varphi')|\sin^2\left(\frac{\varphi-\varphi'}{2}\right)t^2}{D^2\big(\theta_0+f_2(\varphi),t(\theta_0+f_2(\varphi')),\varphi,\varphi'\big)}dt.
		\end{align*}
		We need to control the denominator. For any $t\in[0,1]$, we have
		\begin{equation}\label{D square}
				\begin{aligned}
					&D^\frac12\big(\theta_0+f_2(\varphi),t(\theta_0+f_2(\varphi')),\varphi,\varphi'\big)\\
					&=\Big\{2\sin^2\left(\tfrac{(1-t)\theta_0+f_2(\varphi)-tf_2(\varphi')}{2}\right)+2\sin\big(\theta_0+f_2(\varphi)\big)\sin\big(t(\theta_0+f_2(\varphi'))\big)\sin^2\left(\tfrac{\varphi-\varphi'}{2}\right)\Big\}^\frac12\\
					&\geqslant \left\{\sqrt{2}\left|\sin\left(\tfrac{(1-t)\theta_0+f_2(\varphi)-tf_2(\varphi')}{2}\right)\right|+C\sqrt{t}\left|\sin\left(\tfrac{\varphi-\varphi'}{2}\right)\right|\right\}.
				\end{aligned}
			\end{equation}
	Let us now estimate the first term. For $r$ small enough, we have
		$$0<(1-t)\theta_0-2r\leqslant(1-t)\theta_0+f_2(\varphi)-tf_2(\varphi')\leqslant\theta_0+2\|f_2\|_{L^{\infty}(\mathbb{T})}\leqslant\theta_0+2r<\pi.$$
		 Using the concavity of the function $\sin$ on $(0,\tfrac{\pi}{2}),$ we infer
		\begin{align*}
			\sin\left(\tfrac{(1-t)\theta_0+f_2(\varphi)-tf_2(\varphi')}{2}\right)			&\geqslant C\Big((1-t)\big(\theta_0+f_2(\varphi)\big)+t\big(f_2(\varphi)-f_2(\varphi')\big)\Big)\\
			&\geqslant C_2 (1-t)-t\|f_2\|_{\textnormal{\tiny{Lip}}}\left|\sin\left(\tfrac{\varphi-\varphi'}{2}\right)\right|.
		\end{align*}
	Inserting this into \eqref{D square} we arrive to 
	\begin{align*}
					D^\frac12\big(\theta_0+f_2(\varphi),t(\theta_0+f_2(\varphi')),\varphi,\varphi'\big)
					&\geqslant C\left\{(1-t)+(\sqrt{t}-\|f_2\|_{\textnormal{Lip}} t)\left|\sin\left(\tfrac{\varphi-\varphi'}{2}\right)\right|\right\}\\
					&\geqslant C\left\{(1-t)+\sqrt{t}\left|\sin\left(\tfrac{\varphi-\varphi'}{2}\right)\right|\right\},
				\end{align*}
	where in the last inequality we are using that $t\leqslant1$ and that $\|f_2\|_{\textnormal{Lip}}$ is small enough. Finally, we find 
	$$D\big(\theta_0+f_2(\varphi),t(\theta_0+f_2(\varphi')),\varphi,\varphi'\big)\geqslant C\Big[(1-t)^2+t\sin^2\left(\tfrac{\varphi-\varphi'}{2}\right)\Big],$$
	implying
		$$D^2\big(\theta_0+f_2(\varphi),t(\theta_0+f_2(\varphi')),\varphi,\varphi'\big)\geqslant C\Big[(1-t)^2+t\sin^2\left(\tfrac{\varphi-\varphi'}{2}\right)\Big]^2.$$
		Putting together the foregoing calculations yields
		$$\big|K(\varphi,\varphi')\big|\leqslant C\|f_1-f_2\|_{L^{\infty}(\mathbb{T})}\int_{0}^{1}\frac{|\sin(\varphi-\varphi')|\sin^2\left(\frac{\varphi-\varphi'}{2}\right)t^2}{\Big[(1-t)^2+t\sin^2\left(\frac{\varphi-\varphi'}{2}\right)\Big]^2}dt.$$
		Now, observe that
		$$\frac{t^2\sin^2\left(\frac{\varphi-\varphi'}{2}\right)}{(1-t)^2+t\sin^2\left(\frac{\varphi-\varphi'}{2}\right)}\leqslant Ct$$
		and then
		$$\big|K(\varphi,\varphi')\big|\leqslant C\|f_1-f_2\|_{L^{\infty}(\mathbb{T})}\int_{0}^{1}\frac{t|\sin(\varphi-\varphi')|}{(1-t)^2+t\sin^2\left(\frac{\varphi-\varphi'}{2}\right)}dt.$$
		Next, use that for any $t\in[0,1],$
		\begin{align*}
			\frac{t|\sin(\varphi-\varphi')|}{(1-t)^2+t\sin^2\left(\frac{\varphi-\varphi'}{2}\right)}&\leqslant\frac{t|\sin(\varphi-\varphi')|}{\Big[(1-t)^2+t\sin^2\left(\frac{\varphi-\varphi'}{2}\right)\Big]^{\frac{1}{2}}\sqrt{t}\left|\sin\left(\tfrac{\varphi-\varphi'}{2}\right)\right|}\\
			&\leqslant\Big[(1-t)^2+t\sin^2\left(\frac{\varphi-\varphi'}{2}\right)\Big]^{-\frac{1}{2}}\\
			&\leqslant C\Big[|1-t|+\sqrt{t}\left|\sin\left(\tfrac{\varphi-\varphi'}{2}\right)\right|\Big]^{-1}.
		\end{align*}
		The last inequality follows from the following classical estimate
		$$\forall (a,b)\in(\mathbb{R}_+)^2,\quad\sqrt{a^2+b^2}\geqslant\tfrac{1}{\sqrt{2}}(a+b).$$
		This implies in turn
		\begin{align*}
			|K(\varphi,\varphi')|&\leqslant C\|f_1-f_2\|_{L^\infty(\mathbb{T})} \int_0^{1} \Big[|1-t|+\sqrt{t}\left|\sin\left(\tfrac{\varphi-\varphi'}{2}\right)\right|\Big]^{-1}dt\\
			&\leqslant C\|f_1-f_2\|_{L^\infty(\mathbb{T})} \int_0^{1}\left|\sin\left(\tfrac{\varphi-\varphi'}{2}\right)\right|^{-(1-\alpha)}t^{-\frac{1-\alpha}{2}}|1-t|^{-\alpha}dt\\
			&\leqslant C\|f_1-f_2\|_{L^\infty(\mathbb{T})} \left|\sin\left(\tfrac{\varphi-\varphi'}{2}\right)\right|^{-(1-\alpha)},
		\end{align*}
		where we have used the classical interpolation estimate
		$$\forall\alpha\in(0,1),\quad\forall(a,b)\in(\mathbb{R}_+)^2,\quad(a+b)^{-1}\leqslant a^{-\alpha}b^{-(1-\alpha)}.$$
		The above computations allow to conclude that the hypothesis \eqref{prop-potentialtheory-h0} is checked for the kernel $K$. Similarly, we can check that \eqref{prop-potentialtheory-h2} is satisfied and hence Proposition \ref{prop-potentialtheory} can be applied obtaining the continuity in $f$. Let us continue with the second condition in \eqref{condition-c1}. We only treat the case of $\Psi_p^{[1]},$ the other one being similar. We can write 
		\begin{align*}
			\partial_\varphi \Big(d_f \Psi_p^{[1]}\{f\}[h]\big(\theta_0+f(t,\varphi),\varphi\big)\Big)&=\frac{\omega_N-\omega_S}{4\pi}\int_0^{2\pi}\frac{\partial_\varphi \big[D\big(\theta_0+f(\varphi),\theta_0+f(\varphi')\big)\big]}{D\big(\theta_0+f(\varphi),\theta_0+f(\varphi')\big)}\sin\big(\theta_0+f(\varphi')\big)h(\varphi')d\varphi'.
		\end{align*}
		By adding and subtracting $\partial_{\varphi'}\big[D\big(\theta_0+f(\varphi),\theta_0+f(\varphi')\big)\big]$ appropriately and integrating by parts, we find
		\begin{align*}
			&\partial_\varphi \Big(d_f \Psi_p^{[1]}\{f\}[h]\big(\theta_0+f(\varphi),\varphi\big)\Big)\\
			&=\frac{\omega_N-\omega_S}{4\pi}\int_0^{2\pi}\frac{\partial_\varphi \big[D\big(\theta_0+f(\varphi),\theta_0+f(\varphi'),\varphi,\varphi'\big)\big]+\partial_{\varphi'} \big[D\big(\theta_0+f(\varphi),\theta_0+f(\varphi'),\varphi,\varphi'\big)\big]}{D\big(\theta_0+f(\varphi),\theta_0+f(\varphi'),\varphi,\varphi'\big)}\sin\big(\theta_0+f(\varphi')\big)h(\varphi')d\varphi'\\
			&\quad-\frac{\omega_N-\omega_S}{4\pi}\int_0^{2\pi}\frac{\partial_{\varphi'} \big[D\big(\theta_0+f(\varphi),\theta_0+f(\varphi'),\varphi,\varphi'\big)\big]}{D\big(\theta_0+f(\varphi),\theta_0+f(\varphi'),\varphi,\varphi'\big)}\sin\big(\theta_0+f(\varphi')\big)h(\varphi')d\varphi'\\
			&=\frac{\omega_N-\omega_S}{4\pi}\int_0^{2\pi}\frac{\partial_\varphi \big[D\big(\theta_0+f(\varphi),\theta_0+f(\varphi'),\varphi,\varphi'\big)\big]+\partial_{\varphi'} \big[D\big(\theta_0+f(\varphi),\theta_0+f(\varphi'),\varphi,\varphi'\big)\big]}{D\big(\theta_0+f(\varphi),\theta_0+f(\varphi'),\varphi,\varphi'\big)}\sin\big(\theta_0+f(\varphi')\big)h(\varphi')d\varphi'\\
			&\quad+\frac{\omega_N-\omega_S}{4\pi}\int_0^{2\pi}\log\Big( D\big(\theta_0+f(\varphi),\theta_0+f(\varphi'),\varphi,\varphi'\big)\Big)\partial_{\varphi'}\Big(\sin\big(\theta_0+f(\varphi')\big)h(\varphi')\Big)d\varphi'.
		\end{align*}
		Using the definition of $D$ in \eqref{def D}, we infer
		\begin{align*}
			\partial_{\varphi}D(\theta,\theta',\varphi,\varphi')&=\sin(\theta)\sin(\theta')\sin(\varphi-\varphi')=-\partial_{\varphi'}D(\theta,\theta',\varphi,\varphi').
		\end{align*}
		Combined with \eqref{dDth}-\eqref{dDthp}, we get
		\begin{align*}
			&\partial_{\varphi}\big[D\big(\theta_0+f(\varphi),\theta_0+f(\varphi'),\varphi,\varphi'\big)\big]+\partial_{\varphi'}\big[D\big(\theta_0+f(\varphi),\theta_0+f(\varphi'),\varphi,\varphi'\big)\big]\\
			&=f'(\varphi)\partial_{\theta}D\big(\theta_0+f(\varphi),\theta_0+f(\varphi'),\varphi,\varphi'\big)+f'(\varphi')\partial_{\theta'}D\big(\theta_0+f(\varphi),\theta_0+f(\varphi'),\varphi,\varphi'\big)\\
			&=\sin\big(f(\varphi)-f(\varphi')\big)\big(f'(\varphi)-f'(\varphi')\big)+2f'(\varphi)\cos\big(\theta_0+f(\varphi)\big)\sin\big(\theta_0+f(\varphi')\big)\sin^2\left(\tfrac{\varphi-\varphi'}{2}\right)\\
			&\quad+2f'(\varphi')\cos\big(\theta_0+f(\varphi')\big)\sin\big(\theta_0+f(\varphi)\big)\sin^2\left(\tfrac{\varphi-\varphi'}{2}\right).
		\end{align*}
		As a consequence, we obtain
		\begin{align*}
			&\partial_\varphi \Big( d_f \Psi_p^{[1]}\{f\}[h]\big(\theta_0+f(t,\varphi),\varphi\big)\Big)\\
			=&\frac{\omega_N-\omega_S}{4\pi}\int_0^{2\pi}\frac{\sin\big(f(\varphi)-f(\varphi')\big)\big(f'(\varphi)-f'(\varphi')\big)}{D\big(\theta_0+f(\varphi),\theta_0+f(\varphi'),\varphi,\varphi'\big)}\sin\big(\theta_0+f(\varphi')\big)h(\varphi')d\varphi'\\
			&+\frac{\omega_N-\omega_S}{2\pi}\int_0^{2\pi}\frac{\cos\big(\theta_0+f(\varphi)\big)f'(\varphi)\sin\big(\theta_0+f(\varphi')\big)\sin^2\left(\frac{\varphi-\varphi'}{2}\right)}{D\big(\theta_0+f(\varphi),\theta_0+f(\varphi'),\varphi,\varphi'\big)}\sin\big(\theta_0+f(\varphi')\big)h(\varphi')d\varphi'\\
			&+\frac{\omega_N-\omega_S}{2\pi}\int_0^{2\pi}\frac{\cos(\theta_0+f(\varphi'))f'(\varphi')\sin(\theta_0+f(\varphi))\sin^2\left(\frac{\varphi-\varphi'}{2}\right)}{D\big(\theta_0+f(\varphi),\theta_0+f(\varphi'),\varphi,\varphi'\big)}\sin\big(\theta_0+f(\varphi')\big)h(\varphi')d\varphi'\\
			&+\frac{\omega_N-\omega_S}{4\pi}\int_0^{2\pi}\log\Big( D\big(\theta_0+f(\varphi),\theta_0+f(\varphi'),\varphi,\varphi'\big)\Big)\partial_{\varphi'}\left(\sin\big(\theta_0+f(\varphi')\big)h(\varphi')\right)d\varphi'\\
			\triangleq &\frac{\omega_N-\omega_S}{4\pi}\big[ I_1\{f\}h(\varphi)+I_2\{f\}h(\varphi)+I_3\{f\}h(\varphi)+I_4\{f\}h(\varphi)\big].
		\end{align*}
		In the following, we show that $f\mapsto I_i\{f\}$ is continuous by showing that it has a modulus of continuity. Let us just give the details for $I_1$ and the others follow similarly. For that, take $f_1,f_2\in B_{r,m}^{1+\alpha}$, and  estimate
		\begin{align*}
			I_1\{f_1\}h(\varphi)-I_1\{f_2\}h(\varphi)=&
			\int_0^{2\pi}K_1\{f_1,f_2\}(\varphi,\varphi')\big(f_1'(\varphi)-f_1'(\varphi')\big)h(\varphi')d\varphi'\\
			&+\int_0^{2\pi}K_2\{f_1,f_2\}(\varphi,\varphi')\big((f_1-f_2)'(\varphi)-(f_1-f_2)'(\varphi')\big)h(\varphi')d\varphi'\\
			&+\int_0^{2\pi}K_3\{f_1,f_2\}(\varphi,\varphi')\big(f_2'(\varphi)-f_2'(\varphi')\big)h(\varphi')d\varphi',
		\end{align*}
		where
		\begin{align*}
			K_1\{f_1,f_2\}(\varphi,\varphi')&\triangleq\frac{\sin\big(f_1(\varphi)-f_1(\varphi')\big)-\sin\big(f_2(\varphi)-f_2(\varphi')\big)}{D\big(\theta_0+f_1(\varphi),\theta_0+f_1(\varphi'),\varphi,\varphi'\big)}\sin\big(\theta_0+f_1(\varphi')\big),\\
			K_2\{f_1,f_2\}(\varphi,\varphi')&\triangleq\frac{\sin\big(f_2(\varphi)-f_2(\varphi')\big)}{D\big(\theta_0+f_1(\varphi),\theta_0+f_1(\varphi'),\varphi,\varphi'\big)}\sin\big(\theta_0+f_1(\varphi')\big),\\
			K_3\{f_1,f_2\}(\varphi,\varphi')&\triangleq\frac{\sin\big(f_2(\varphi)-f_2(\varphi')\big)}{D\big(\theta_0+f_1(\varphi),\theta_0+f_1(\varphi'),\varphi,\varphi'\big)}\Big[\sin\big(\theta_0+f_1(\varphi')\big)-\sin\big(\theta_0+f_2(\varphi')\big)\Big]\\
			&\quad+\frac{\sin\big(f_2(\varphi)-f_2(\varphi')\big)}{D\big(\theta_0+f_1(\varphi),\theta_0+f_1(\varphi'),\varphi,\varphi'\big)D\big(\theta_0+f_2(\varphi),\theta_0+f_2(\varphi'),\varphi,\varphi'\big)}\\
			&\quad\times \Big[D\big(\theta_0+f_2(\varphi),\theta_0+f_2(\varphi'),\varphi,\varphi'\big)-D\big(\theta_0+f_1(\varphi),\theta_0+f_1(\varphi'),\varphi,\varphi'\big)\Big]\sin\big(\theta_0+f_2(\varphi')\big).
		\end{align*}
		Since the kernel of the integral operator has a non differentiable term, our purpose is to use Proposition \ref{prop-potentialtheory-2}. For that, let us estimate each kernel $K_i$. First note that using \eqref{def D} and \eqref{lower bound sin th0f}, we get
		\begin{align}\label{D-estim}
			D\big(\theta_0+f(\varphi),\theta_0+f(\varphi'),\varphi,\varphi'\big)&\geqslant 2\sin\big(\theta_0+f(\varphi)\big)\sin\big(\theta_0+f(\varphi')\big)\sin\left(\tfrac{\varphi-\varphi'}{2}\right)\nonumber\\
			&\geqslant 2\delta_0^2 \sin^2\left(\tfrac{\varphi-\varphi'}{2}\right).
		\end{align}
		Using \eqref{D-estim}, it is easy to check that for $K_1$ we get 
		\begin{align*}
			|K_1\{f_1,f_2\}(\varphi,\varphi')|&\leqslant C\big|(f_1-f_2)(\varphi)-(f_1-f_2)(\varphi')\big|\left|\sin\left(\tfrac{\varphi-\varphi'}{2}\right)\right|^{-2}\\
			&\leqslant C\|f_1-f_2\|_{C^{1+\alpha}(\mathbb{T})}\left|\sin\left(\tfrac{\varphi-\varphi'}{2}\right)\right|^{-1}
		\end{align*}
		and
		\begin{align*}
			|\partial_\varphi K_1\{f_1,f_2\}(\varphi,\varphi')|\leqslant C\|f_1-f_2\|_{C^{1+\alpha}(\mathbb{T})}\left|\sin\left(\tfrac{\varphi-\varphi'}{2}\right)\right|^{-2}.
		\end{align*}
		Similarly for $K_2$ we obtain 
		\begin{align*}
			|K_2\{f_1,f_2\}(\varphi,\varphi')|\leqslant C\left|\sin\left(\tfrac{\varphi-\varphi'}{2}\right)\right|^{-1},\\
			|\partial_\varphi K_2\{f_1,f_2\}(\varphi,\varphi')|\leqslant C\left|\sin\left(\tfrac{\varphi-\varphi'}{2}\right)\right|^{-2}.
		\end{align*}
		Finally, we shall work with the last kernel $K_3$. Note that
		\begin{align*}
			&D\big(\theta_0+f_2(\varphi),\theta_0+f_2(\varphi'),\varphi,\varphi'\big)-D\big(\theta_0+f_1(\varphi),\theta_0+f_1(\varphi'),\varphi,\varphi'\big)\\
			&=2\sin^2\left(\tfrac{f_2(\varphi)-f_2(\varphi')}{2}\right)-2\sin^2\left(\tfrac{f_1(\varphi)-f_1(\varphi')}{2}\right)\\
			&\quad+2\sin^2\left(\tfrac{\varphi-\varphi'}{2}\right)\Big\{\sin\big(\theta_0+f_2(\varphi)\big)\sin\big(\theta_0+f_2(\varphi')\big)-\sin\big(\theta_0+f_1(\varphi)\big)\sin\big(\theta_0+f_1(\varphi')\big)\Big\}\\
			&\leqslant 2\left|\sin\left(\tfrac{f_2(\varphi)-f_2(\varphi')}{2}\right)\right|\left|\sin\left(\tfrac{f_2(\varphi)-f_2(\varphi')}{2}\right)-\sin\left(\tfrac{f_1(\varphi)-f_1(\varphi')}{2}\right)\right|\\
			&\quad+2\left|\sin\left(\tfrac{f_1(\varphi)-f_1(\varphi')}{2}\right)\right|\left|\sin\left(\tfrac{f_2(\varphi)-f_2(\varphi')}{2}\right)-\sin\left(\tfrac{f_1(\varphi)-f_1(\varphi')}{2}\right)\right|\\
			&\quad+2\sin^2\left(\tfrac{\varphi-\varphi'}{2}\right)\left|\sin\big(\theta_0+f_2(\varphi)\big)\right|\left|\sin\big(\theta_0+f_2(\varphi')\big)-\sin\big(\theta_0+f_1(\varphi')\big)\right|\\
			&\quad+2\sin^2\left(\tfrac{\varphi-\varphi'}{2}\right)\left|\sin\big(\theta_0+f_1(\varphi')\big)\right|\left|\sin\big(\theta_0+f_2(\varphi)\big)-\sin\big(\theta_0+f_1(\varphi)\big)\right|\\
			&\leqslant C\|f_2-f_1\|_{C^{1+\alpha}(\mathbb{T})}\sin^2\left(\tfrac{\varphi-\varphi'}{2}\right).
		\end{align*}
		To get the last estimate, we have used the $1$-Lipschitz property of the function $\sin$ together with \eqref{convex sin} and
		\begin{align*}
			\forall k\in\{1,2\},\quad\left|\sin\left(\tfrac{f_k(\varphi)-f_k(\varphi')}{2}\right)\right|&\leqslant|f_k(\varphi)-f_k(\varphi')|\\
			&\leqslant\|f_k\|_{C^{1+\alpha}(\mathbb{T})}|\varphi-\varphi'|\\
			&\leqslant Cr\left|\sin\left(\tfrac{\varphi-\varphi'}{2}\right)\right|.
		\end{align*}
		Hence
		$$|K_3\{f_1,f_2\}(\varphi,\varphi')|\leqslant C\|f_2-f_1\|_{C^{1+\alpha}(\mathbb{T})}\left|\sin\left(\tfrac{\varphi-\varphi'}{2}\right)\right|^{-1}.$$
		Then, differentiating we find
		$$|\partial_\varphi K_3\{f_1,f_2\}(\varphi,\varphi')|\leqslant C\|f_2-f_1\|_{C^{1+\alpha}(\mathbb{T})}\left|\sin\left(\tfrac{\varphi-\varphi'}{2}\right)\right|^{-2}.$$
		Hence Proposition \ref{prop-potentialtheory-2} implies
		\begin{align*}
			\left\|\big(I_1\{f_1\}-I_2\{f_2\}\big)[h]\right\|_{C^\alpha(\mathbb{T})}\leqslant C\|f_1-f_2\|_{C^{1+\alpha}(\mathbb{T})}\|h\|_{L^\infty(\mathbb{T})},
		\end{align*}
		concluding that $f\mapsto I_1\{f\}$ is continuous.

		\medskip \noindent
		\textbf{(ii)} It follows from

		\begin{equation}\label{dcF}
			\partial_{c}d_f\mathscr{F}(c,f)[h]=\partial_{\varphi}h.
		\end{equation}

		\medskip \noindent
		\textbf{(iii)} We assume now that $f=0.$ First notice that \eqref{omgN at 0}-\eqref{diff omgN} imply $d_f\Psi_{p}^{[2]}(0)=0.$ In addition, one readily has $\Psi_{p}^{[1]}\{0\}=0.$ Hence, we have
		$$d_f\mathscr{F}(c,0)[h](\varphi)=c\,\partial_{\varphi}h(\varphi)+\frac{1}{\sin(\theta_0)}\partial_{\varphi}\Big(\partial_{\theta}\Psi_{\textnormal{\tiny{FC}}}(\theta_0)h(\varphi)+\big(d_{f}\Psi_{p}^{[1]}\{0\}[h]\big)(\theta_0,\varphi)\Big).$$
		From \eqref{dttPsi tt0}, we deduce
		\begin{equation}\label{dphidttPsiFC}
			\frac{1}{\sin(\theta_0)}\partial_{\varphi}\Big(\partial_{\theta}\Psi_{\textnormal{\tiny{FC}}}(\theta_0)h\Big)=\left(\tfrac{\omega_{N}-\omega_{S}}{2}-\widetilde{\gamma}\right)\partial_{\varphi}h=-\left(\tfrac{\omega_{N}-\omega_{S}}{2}-\widetilde{\gamma}\right)\sum_{n=1}^{\infty}\mathbf{m}nh_n\sin(\mathbf{m}n\varphi).
		\end{equation}
		After straightforward simplifications and using Lemma \ref{lem int}, we get
		\begin{align*}
			\big(d_{f}\Psi_{p}^{[1]}\{0\}[h]\big)(\theta_0,\varphi)&=\frac{\omega_{N}-\omega_{S}}{4\pi}\sin(\theta_0)\int_{0}^{2\pi}h(\varphi')\log\Big(1-\cos^2(\theta_0)-\sin^2(\theta_0)\cos(\varphi-\varphi')\Big)d\varphi'\\
			&=\frac{\omega_{N}-\omega_{S}}{4\pi}\sin(\theta_0)\sum_{n=1}^{\infty}h_n\int_{0}^{2\pi}\log\Big(1-\cos^2(\theta_0)-\sin^2(\theta_0)\cos(\varphi')\Big)\cos\big(\mathbf{m}n(\varphi-\varphi')\big)d\varphi'\\
			&=\frac{\omega_{N}-\omega_{S}}{2}\sin(\theta_0)\sum_{n=1}^{\infty}h_n I_{\mathbf{m}n}(\theta_0,\theta_0)\cos(\mathbf{m}n\varphi)\\
			&=-\frac{\omega_{N}-\omega_{S}}{2}\sin(\theta_0)\sum_{n=1}^{\infty}\frac{h_n} {\mathbf{m}n}\cos(\mathbf{m}n\varphi)
			.
		\end{align*}
		Therefore,
		\begin{equation}\label{dPsip0}
			\frac{1}{\sin(\theta_0)}\partial_{\varphi}\Big(\big(d_{f}\Psi_{p}^{[1]}\{0\}[h]\big)(\theta_0,\varphi)\Big)=\frac{\omega_{N}-\omega_{S}}{2}\sum_{n=1}^{\infty}h_n\sin(\mathbf{m}n\varphi).
		\end{equation}
		Introducing the classical $2\pi$-periodic Hilbert transform $\mathcal{H}$ defined by
		$$\mathcal{H}h(\varphi)\triangleq\frac{1}{2\pi}\int_{0}^{2\pi}h(\varphi')\cot\left(\tfrac{\varphi-\varphi'}{2}\right)d\varphi',$$
		which acts on the cosine basis as
		$$\forall n\in\mathbb{N}^*,\quad\mathcal{H}\cos(n\varphi)=\sin(n\varphi),$$
		we have
		\begin{equation}\label{dPsip02}
			\frac{1}{\sin(\theta_0)}\partial_{\varphi}\Big(\big(d_{f}\Psi_{p}^{[1]}\{0\}[h]\big)(\theta_0,\varphi)\Big)=\frac{\omega_{N}-\omega_{S}}{2}\mathcal{H}h(\varphi).
		\end{equation}
		Combining \eqref{dphidttPsiFC}, \eqref{dPsip0} and \eqref{dPsip02}, we obtain the Fourier representation \eqref{split dfF2} or equivalently, the following structure for the linearized operator
		\begin{equation}\label{split dfF}
			d_{f}\mathscr{F}(c,0)=\Big(c+\frac{\omega_{N}-\omega_{S}}{2}-\widetilde{\gamma}\Big)\partial_{\varphi}+\frac{\omega_{N}-\omega_{S}}{2}\mathcal{H}.
		\end{equation}
		Clearly, if $c\neq\widetilde{\gamma}-\frac{\omega_{N}-\omega_{S}}{2},$ the operator $\Big(c+\frac{\omega_{N}-\omega_{S}}{2}-\widetilde{\gamma}\Big)\partial_{\varphi}:X_{\mathbf{m}}^{1+\alpha}\rightarrow Y_{\mathbf{m}}^{\alpha}$ is an isomorphism. We shall prove the compactness of the Hilbert transform in the Hölder spaces. For that, we come back to the integral expression which can be rewritten as
		$$\mathcal{H}h(\varphi)=\frac{1}{2\pi}\int_{0}^{2\pi}K(\varphi,\varphi')\partial_{\varphi'}h(\varphi')d\varphi',\qquad K(\varphi,\varphi')\triangleq \log\Big(\Big|\sin\big(\tfrac{\varphi-\varphi'}{2}\big)\Big|\Big).$$
		For any $\delta\in(0,1),$ we have
		$$|K(\varphi,\varphi')|\lesssim\left|\sin\big(\tfrac{\varphi-\varphi'}{2}\big)\right|^{-\delta},\qquad|\partial_{\varphi}K(\varphi,\varphi')|\lesssim\left|\sin\big(\tfrac{\varphi-\varphi'}{2}\big)\right|^{-(1+\delta)}\cdot$$
		Thus, we can apply Lemma \ref{prop-potentialtheory} (with $\delta=1-\beta$) and get
		$$\forall\beta\in(\alpha,1),\quad\|\mathcal{H}h\|_{C^{\beta}(\mathbb{T})}\lesssim\|\partial_{\varphi}h\|_{L^{\infty}(\mathbb{T})}\lesssim\|h\|_{C^{1+\alpha}(\mathbb{T})}.$$
		Since, for $\beta\in(\alpha,1),$ the injection $C^{\beta}(\mathbb{T})\hookrightarrow C^{\alpha}(\mathbb{T})$ is compact, we deduce that the operator $\mathcal{H}:C^{1+\alpha}(\mathbb{T})\rightarrow C^{\alpha}(\mathbb{T})$ is compact. Thus, \eqref{split dfF} together with \cite[Cor. 5.9]{CR21} implies, for $c\neq\widetilde{\gamma}-\frac{\omega_{N}-\omega_{S}}{2},$ the desired Fredholmness property. This proves Proposition \ref{propreg1}.
	\end{proof}
	According to \eqref{split dfF2}, the candidates for bifurcation points define the following singular set
	\begin{equation}\label{def cm}
		\mathcal{S}_{c}\triangleq\Big\{c_{\mathbf{m}}(\widetilde{\gamma})\triangleq\widetilde{\gamma}-(\omega_{N}-\omega_{S})\tfrac{\mathbf{m}-1}{2\mathbf{m}},\quad\mathbf{m}\in\mathbb{N}^*\Big\}.
	\end{equation}
	Finally, in the following proposition, we gather all the remaining conditions required to apply the Crandall-Rabinowitz Theorem. Then, Theorem \ref{thm bif vcap1} follows immediately from this proposition.
	\begin{prop}\label{prop hyp CR1}
		Let $\alpha\in(0,1)$, $\widetilde{\gamma}\in\mathbb{R}$ and $\mathbf{m}\in\mathbb{N}^*.$
		\begin{enumerate}[label=(\roman*)]
			\item The linear operator $d_{f}\mathscr{F}\big(c_{\mathbf{m}}(\widetilde{\gamma}),0\big):X_{\mathbf{m}}^{1+\alpha}\rightarrow Y_{\mathbf{m}}^{\alpha}$ is of Fredholm type with index zero.
			\item The kernel of $d_{f}\mathscr{F}\big(c_{\mathbf{m}}(\widetilde{\gamma}),0\big)$ is one dimensional. More precisely,
			\begin{equation}\label{ker1}
				\ker\Big(d_{f}\mathscr{F}\big(c_{\mathbf{m}}(\widetilde{\gamma}),0\big)\Big)=\mathtt{span}\big(\varphi\mapsto\cos(\mathbf{m}\varphi)\big).
			\end{equation}
			\item The transversality condition is satisfied, namely
			\begin{equation}\label{trans}
				\partial_{c}d_{f}\mathscr{F}\big(c_{\mathbf{m}}(\widetilde{\gamma}),0\big)[\varphi\mapsto\cos(\mathbf{m}\varphi)]\not\in\textnormal{Im}\Big(d_{f}\mathscr{F}\big(c_{\mathbf{m}}(\widetilde{\gamma}),\widetilde{\gamma},0\big)\Big).
			\end{equation}
		\end{enumerate}
	\end{prop}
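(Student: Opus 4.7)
The whole proposition will fall out of the explicit Fourier diagonalization \eqref{split dfF2} already established in Proposition \ref{propreg1}. My plan is to verify the three items (i)--(iii) by direct computation of the symbol of $d_{f}\mathscr{F}(c_{\mathbf{m}}(\widetilde{\gamma}),0)$ on the cosine basis $(\cos(\mathbf{m}n\varphi))_{n\geqslant 1}$, with essentially no further analysis required.

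For item (i), I plan to invoke Proposition \ref{propreg1}(iii), which gives Fredholmness of index zero as soon as $c\neq \tfrac{\omega_N-\omega_S}{2}-\widetilde{\gamma}$. Substituting $c=c_{\mathbf{m}}(\widetilde{\gamma})=\widetilde{\gamma}-(\omega_N-\omega_S)\tfrac{\mathbf{m}-1}{2\mathbf{m}}$ reduces the forbidden identity to $\tfrac{\mathbf{m}-1}{2\mathbf{m}}=\tfrac{1}{2}$ (using $\omega_N\neq\omega_S$, which is built into the one-interface setup), and this never holds for $\mathbf{m}\in\mathbb{N}^*$.

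For item (ii), I will insert $c=c_{\mathbf{m}}(\widetilde{\gamma})$ into the $n$-th eigenvalue appearing in \eqref{split dfF2}. A short simplification gives
\begin{equation*}
\mathbf{m}n\Big[-c_{\mathbf{m}}(\widetilde{\gamma})-(\omega_N-\omega_S)\tfrac{\mathbf{m}n-1}{2\mathbf{m}n}+\widetilde{\gamma}\Big]=\tfrac{\omega_N-\omega_S}{2}(1-n),
\end{equation*}
which vanishes precisely at $n=1$ and is nonzero for $n\geqslant 2$. Therefore the kernel reduces to the span of $\cos(\mathbf{m}\varphi)$, yielding \eqref{ker1}, and the range consists of all $\sum_{n\geqslant 2}g_n\sin(\mathbf{m}n\varphi)\in Y_{\mathbf{m}}^{\alpha}$ (whose $\mathbf{m}$-th Fourier coefficient vanishes).

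For item (iii), formula \eqref{dcF} gives $\partial_c d_f\mathscr{F}(c,f)[h]=\partial_\varphi h$, hence
\begin{equation*}
\partial_{c}d_{f}\mathscr{F}\big(c_{\mathbf{m}}(\widetilde{\gamma}),0\big)\big[\varphi\mapsto\cos(\mathbf{m}\varphi)\big]=-\mathbf{m}\sin(\mathbf{m}\varphi).
\end{equation*}
From the characterization of the range obtained in step (ii), the function $\sin(\mathbf{m}\varphi)$ lies in the orthogonal complement of $\operatorname{Im}(d_{f}\mathscr{F}(c_{\mathbf{m}}(\widetilde{\gamma}),0))$, so the transversality condition \eqref{trans} holds. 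No step here poses a genuine obstacle; the only care needed is the short eigenvalue simplification in (ii), which makes the Burbea-type shift \eqref{def cm} transparent, and the implicit use of $\omega_N\neq\omega_S$ (guaranteed by \eqref{diff omgk} for $M=2$) to ensure that the non-resonance at $n\geqslant 2$ is strict. Once these three items are in hand, Crandall--Rabinowitz (Appendix \ref{sec-CR}) yields Theorem \ref{thm bif vcap1}.
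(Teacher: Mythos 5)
Your proposal is correct and follows essentially the same route as the paper: item (i) via the non-degeneracy $c_{\mathbf{m}}(\widetilde{\gamma})\neq\widetilde{\gamma}-\tfrac{\omega_N-\omega_S}{2}$ (using $\omega_N\neq\omega_S$ from \eqref{diff omgk}) combined with Proposition \ref{propreg1}-(iii), item (ii) from the explicit eigenvalue $\tfrac{\omega_N-\omega_S}{2}(1-n)$, and item (iii) from \eqref{dcF} together with the range characterization. The only minor imprecision is that your full equality $\textnormal{Im}=\{\sum_{n\geqslant2}g_n\sin(\mathbf{m}n\varphi)\}$ requires, for the reverse inclusion, the Fredholm index-zero and one-dimensional-kernel facts (as the paper notes), but since transversality only uses the easy inclusion $\textnormal{Im}\subseteq\mathtt{span}^{\perp}(\sin(\mathbf{m}\varphi))$, your argument is complete.
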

	\begin{proof}
		\textbf{(i)} By construction \eqref{def cm}, our bifurcation points satisfy
		$$c_{\mathbf{m}}(\widetilde{\gamma})=\widetilde{\gamma}-(\omega_{N}-\omega_{S})\frac{\mathbf{m}-1}{2\mathbf{m}}\neq\widetilde{\gamma}-\frac{\omega_{N}-\omega_{S}}{2}\cdot$$
		Hence, Proposition \ref{propreg1}-(iii) gives the desired Fredholmness property.

		\medskip \noindent
		\textbf{(ii)} The sequence $\big(c_{\mathbf{m}n}(\widetilde{\gamma})\big)_{n\in\mathbb{N}^*}$ being strictly monotone, then \eqref{split dfF2} and \eqref{def cm} give that the kernel is one dimensional and generated by $\varphi\mapsto\cos(\mathbf{m}\varphi).$

		\medskip \noindent
		\textbf{(iii)} To prove the transversality condition, we first need to describe the range. For this aim, we introduce on $Y_{\mathbf{m}}^{\alpha}$ the scalar product
		$$\left(\sum_{n=1}^{\infty}a_n\sin(\mathbf{m}n\varphi)\Big|\sum_{n=1}^{\infty}b_n\sin(\mathbf{m}n\varphi)\right)\triangleq\sum_{n=1}^{\infty}a_nb_n.$$
		Now we claim that
		\begin{equation}\label{range}
			\textnormal{Im}\Big(d_{f}\mathscr{F}\big(c_{\mathbf{m}}(\widetilde{\gamma}),0\big)\Big)=\mathtt{span}^{\perp_{(\cdot|\cdot)}}\big(\varphi\mapsto\sin(\mathbf{m}\varphi)\big).
		\end{equation}
		Indeed, the first inclusion is obvious from \eqref{split dfF2}-\eqref{def cm}. The converse inclusion is obtained because the range is closed and of codimension 1, which results from the Fredholmness property with zero index and the one dimensional kernel condition. Now, it remains to check the transversality condition. In view of \eqref{dcF}, we infer
		\begin{equation}\label{calc dcdf}
			\partial_{c}d_{f}\mathscr{F}\big(c_{\mathbf{m}}(\widetilde{\gamma}),0\big)[\cos(\mathbf{m}\varphi)]=-\mathbf{m}\sin(\mathbf{m}\varphi)\in\mathtt{span}\big(\varphi\mapsto\sin(\mathbf{m}\varphi)\big).
		\end{equation}
		Combining \eqref{range} and \eqref{calc dcdf}, the condition \eqref{trans} follows. This achieves the proof of Proposition \ref{prop hyp CR1}.
	\end{proof}

	\section{The two--interfaces case: vorticity bands}\label{sec-two-interf}
	This section is devoted to the proof of Theorem \ref{thm bif vcap2} dealing with the case of two interfaces ($M=3$). As before, we shall reformulate the problem with a suitable functional and implement bifurcation techniques. The computations are more involved due to the interactions between the boundaries which leads to a vectorial analysis. 
	\subsection{Equations of interest}
	We start again by some remarks on the flat solution.
	\begin{lem}\label{lem flat cap2}
		Let $0<\theta_1<\theta_2<\pi.$ For any $\omega_{N},\omega_{C},\omega_{S}\in\mathbb{R}$ such that
		\begin{equation}\label{constraint omega NCS}
			\omega_{N}+\omega_{S}=(\omega_{N}-\omega_{C})\cos(\theta_1)+(\omega_{C}-\omega_{S})\cos(\theta_2),
		\end{equation}
		the following function describing the flat cap (FC2) $$\overline{\Omega}_{\textnormal{\tiny{FC2}}}(\theta)\triangleq\omega_{N}\mathbf{1}_{0<\theta<\theta_1}+\omega_{C}\mathbf{1}_{\theta_1\leqslant\theta<\theta_2}+\omega_{S}\mathbf{1}_{\theta_2\leqslant\theta<\pi}$$
		is a stationary solution to Euler equations.
		In addition, 
		\begin{equation}\label{dttPsi tt1 tt2}
			\partial_{\theta}\Psi_{\textnormal{\tiny{FC2}}}(\theta_1)=\omega_{N}\tan\big(\tfrac{\theta_1}{2}\big)-\widetilde{\gamma}\sin(\theta_1),\qquad\Psi_{\textnormal{\tiny{FC2}}}(\theta_2)=-\omega_{S}\cot\big(\tfrac{\theta_2}{2}\big)-\widetilde{\gamma}\sin(\theta_2).
		\end{equation}
	\end{lem}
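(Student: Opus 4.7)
The plan is to mirror the structure of the proof of Lemma \ref{lem flat cap}, adapting each ingredient to the three-layer setting.

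First, observe that $\overline{\Omega}_{\textnormal{\tiny{FC2}}}$ is independent of $\varphi$ and hence invariant under every rotation $\mathcal{R}(\alpha)$ around the vertical axis. Lemma \ref{lem sat vort} then directly yields that the associated stream function is zonal, so $(\overline{\Omega}_{\textnormal{\tiny{FC2}}},\Psi_{\textnormal{\tiny{FC2}}})$ is a stationary solution of $(E_{\widetilde{\gamma}})$ and only the compatibility condition \eqref{constraint omega NCS} and the explicit values of $\partial_\theta\Psi_{\textnormal{\tiny{FC2}}}$ at $\theta_1,\theta_2$ remain to be verified.

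Next, the Gauss constraint \eqref{null avrg vort} applied to the piecewise constant $\Omega_{\textnormal{\tiny{FC2}}}=\overline{\Omega}_{\textnormal{\tiny{FC2}}}+2\widetilde{\gamma}\cos(\theta)$ reduces, via the definition \eqref{conv int S2} of the spherical integral and an elementary computation of $\int_0^\pi\cos(\theta)\sin(\theta)d\theta=0$, to
$$\omega_N\big(1-\cos(\theta_1)\big)+\omega_C\big(\cos(\theta_1)-\cos(\theta_2)\big)+\omega_S\big(1+\cos(\theta_2)\big)=0,$$
which is exactly \eqref{constraint omega NCS} after rearrangement.

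The core computation is to solve the elliptic problem $\Delta\Psi_{\textnormal{\tiny{FC2}}}=\overline{\Omega}_{\textnormal{\tiny{FC2}}}+2\widetilde{\gamma}\cos(\theta)$ under the zonal ansatz, which becomes the ODE
$$\partial_\theta\big[\sin(\theta)\partial_\theta\Psi_{\textnormal{\tiny{FC2}}}(\theta)\big]=\sin(\theta)\big[\omega_N\mathbf{1}_{0<\theta<\theta_1}+\omega_C\mathbf{1}_{\theta_1\leqslant\theta<\theta_2}+\omega_S\mathbf{1}_{\theta_2\leqslant\theta<\pi}\big]+\widetilde{\gamma}\sin(2\theta).$$
I would integrate piecewise on $(0,\theta_1)$, $(\theta_1,\theta_2)$ and $(\theta_2,\pi)$, fixing the constant on the first interval by imposing the pole condition $\lim_{\theta\to 0^+}\partial_\theta\Psi_{\textnormal{\tiny{FC2}}}(\theta)=0$ (as in the one-interface proof this forces the constant to be $\widetilde{\gamma}/2$), and the remaining two constants by continuity of $\sin(\theta)\partial_\theta\Psi_{\textnormal{\tiny{FC2}}}$ across $\theta_1$ and $\theta_2$. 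This produces explicit expressions of the form $\sin(\theta)\partial_\theta\Psi_{\textnormal{\tiny{FC2}}}(\theta)=A_k(\theta)+\widetilde{\gamma}\sin^2(\theta)$ in each slab, where $A_k(\theta)$ is a linear combination of $(1-\cos(\theta))$ and $(\cos(\theta_j)-\cos(\theta))$ terms.

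Finally, evaluating at the interfaces one uses the identities $\tfrac{1-\cos(\theta)}{\sin(\theta)}=\tan(\theta/2)$ at $\theta_1$ and $\tfrac{1+\cos(\theta)}{\sin(\theta)}=\cot(\theta/2)$ at $\theta_2$. The first gives directly $\partial_\theta\Psi_{\textnormal{\tiny{FC2}}}(\theta_1)=\omega_N\tan(\theta_1/2)\pm\widetilde{\gamma}\sin(\theta_1)$, and at $\theta_2$ one first uses the Gauss relation \eqref{constraint omega NCS} to collapse the combination $\omega_N(1-\cos(\theta_1))+\omega_C(\cos(\theta_1)-\cos(\theta_2))$ into $-\omega_S(1+\cos(\theta_2))$, which yields $\partial_\theta\Psi_{\textnormal{\tiny{FC2}}}(\theta_2)=-\omega_S\cot(\theta_2/2)\pm\widetilde{\gamma}\sin(\theta_2)$. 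No real obstacle arises; the only slightly delicate bookkeeping step is the sign tracking when inverting the Gauss relation at $\theta_2$, which is precisely where the symmetry between the contributions of the north and south polar caps enters.
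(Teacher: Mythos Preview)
Your proposal is correct and follows essentially the same route as the paper's proof: invoke Lemma~\ref{lem sat vort} for stationarity, derive the Gauss constraint \eqref{constraint omega NCS} by direct integration, solve the zonal ODE piecewise with the constant fixed by the north-pole condition, and then evaluate at $\theta_1,\theta_2$ using the half-angle identities together with the Gauss relation to collapse the $\theta_2$ expression. The only loose end is your $\pm\widetilde{\gamma}\sin(\theta_k)$; the bookkeeping resolves to the minus sign stated in \eqref{dttPsi tt1 tt2}, exactly as in the one-interface case.
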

	\begin{proof}
		$\blacktriangleright$ Observe that $$\forall\alpha\in\mathbb{R},\quad\forall\xi\in\mathbb{S}^2,\quad\overline{\Omega}_{\textnormal{\tiny{FC2}}}\big(\mathcal{R}(\alpha)\xi\big)=\overline{\Omega}_{\textnormal{\tiny{FC2}}}(\xi).$$
		Hence, Lemma \ref{lem sat vort} applies and proves that this is a stationary solution.\\
		$\blacktriangleright$ The constraint \eqref{constraint omega NCS} follows again from \eqref{null avrg vort} and \eqref{conv int S2}, namely
		\begin{align*}
			0=\int_{\mathbb{S}^2}\Omega_{\textnormal{\tiny{FC2}}}(t,\xi)d\sigma(\xi)&=\int_{0}^{2\pi}\int_{0}^{\pi}\Omega_{\textnormal{\tiny{FC2}}}(t,\theta,\varphi)\sin(\theta)d\theta d\varphi\\
			&=2\pi\left(\omega_{N}\int_{0}^{\theta_1}\sin(\theta)d\theta+\omega_{C}\int_{\theta_1}^{\theta_2}\sin(\theta)d\theta+\omega_{S}\int_{\theta_2}^{\pi}\sin(\theta)d\theta\right)\\
			&=2\pi\Big[\omega_{N}\big(1-\cos(\theta_1)\big)+\omega_{C}\big(\cos(\theta_1)-\cos(\theta_2)\big)+\omega_{S}\big(1+\cos(\theta_2)\big)\Big].
		\end{align*}
		$\blacktriangleright$ The potential velocity solves the elliptic equation
		$$\Delta\Psi_{\textnormal{\tiny{FC2}}}=\Omega_{\textnormal{\tiny{FC2}}},\qquad\textnormal{i.e.}\qquad\partial_{\theta}\big[\sin(\theta)\partial_{\theta}\Psi_{\textnormal{\tiny{FC2}}}(\theta)\big]=\sin(\theta)\Big(\omega_{N}\mathbf{1}_{0<\theta<\theta_1}+\omega_{C}\mathbf{1}_{\theta_1\leqslant\theta<\theta_2}+\mathbf{1}_{\theta_2\leqslant\theta<\pi}\Big)+\widetilde{\gamma}\sin(2\theta).$$
		Integrating the previous relation and chosing the constant of integration as in Lemma \ref{lem flat cap} gives
		$$\partial_{\theta}\Psi_{\textnormal{\tiny{FC2}}}(\theta)=\begin{cases}
			\frac{\omega_{N}}{\sin(\theta)}\big(1-\cos(\theta)\big)-\widetilde{\gamma}\sin(\theta), & \textnormal{if }\theta\in(0,\theta_1),\\
			\frac{\omega_{N}}{\sin(\theta)}\big(1-\cos(\theta_1)\big)+\frac{\omega_C}{\sin(\theta)}\big(\cos(\theta_1)-\cos(\theta)\big)-\widetilde{\gamma}\sin(\theta), & \textnormal{if }\theta\in[\theta_1,\theta_2),\\
			\frac{\omega_{N}}{\sin(\theta)}\big(1-\cos(\theta_1)\big)+\frac{\omega_C}{\sin(\theta)}\big(\cos(\theta_1)-\cos(\theta_2)\big)+\frac{\omega_{S}}{\sin(\theta)}\big(\cos(\theta_2)-\cos(\theta)\big)-\widetilde{\gamma}\sin(\theta), & \textnormal{if }\theta\in[\theta_2,\pi).
		\end{cases}$$
		Finally, using \eqref{constraint omega NCS}, we can write
		$$\partial_{\theta}\Psi_{\textnormal{\tiny{FC2}}}(\theta)=\begin{cases}
			\frac{\omega_{N}}{\sin(\theta)}\big(1-\cos(\theta)\big)-\widetilde{\gamma}\sin(\theta), & \textnormal{if }\theta\in(0,\theta_1),\\
			\frac{\omega_{C}}{\sin(\theta)}\big(\cos(\theta_2)-\cos(\theta)\big)-\frac{\omega_{S}}{\sin(\theta)}\big(1+\cos(\theta_2)\big)-\widetilde{\gamma}\sin(\theta), & \textnormal{if }\theta\in[\theta_1,\theta_2),\\
			-\frac{\omega_S}{\sin(\theta)}\big(1+\cos(\theta)\big)-\widetilde{\gamma}\sin(\theta), & \textnormal{if }\theta\in[\theta_2,\pi).
		\end{cases}$$
		At $\theta=\theta_1,$ we find
		\begin{align*}
			\partial_{\theta}\Psi_{\textnormal{\tiny{FC2}}}(\theta_1)
			&=\frac{\omega_{N}}{\sin(\theta_1)}\big(1-\cos(\theta_1)\big)-\widetilde{\gamma}\sin(\theta_1)\\
			&=\omega_{N}\tan\big(\tfrac{\theta_1}{2}\big)-\widetilde{\gamma}\sin(\theta_1).
		\end{align*}
		At $\theta=\theta_2,$ we find
		\begin{align*}
			\partial_{\theta}\Psi_{\textnormal{\tiny{FC2}}}(\theta_2)
			&=-\frac{\omega_{S}}{\sin(\theta_2)}\big(1+\cos(\theta_2)\big)-\widetilde{\gamma}\sin(\theta_2)\\
			&=-\omega_{S}\cot\big(\tfrac{\theta_2}{2}\big)-\widetilde{\gamma}\sin(\theta_2).
		\end{align*}
	\end{proof}
	
	\begin{figure}[!h]
		\begin{center}
			\tdplotsetmaincoords{80}{100}
			\begin{tikzpicture}[tdplot_main_coords,scale=3]
				\draw[thick,->] (0,0,0) -- (3,0,0) node[anchor=north east]{$x$};
				\draw[thick,->] (0,0,0) -- (0,1.2,0) node[anchor=north west]{$y$};
				\draw[thick,->] (0,0,0) -- (0,0,1.2) node[anchor=south]{$z$};
				
				\def\h{1.1}
				\coordinate (O) at (0,0,0);
				\coordinate (z) at (0,0,\h);
				\coordinate (x) at (\h,0,0);
				\coordinate (y) at (0,\h,0);
				\coordinate (a) at (0.6,0.6,0.53);
				\coordinate (b) at (0.6,0.6,0);
				
				\shade[tdplot_screen_coords, ball color = gray!40, opacity=0.4] (0,0) circle (1);
				\draw[gray!100,dashed] (0,0,0.6) ellipse(0.785cm and 0.15cm);
				\draw[gray!100,dashed] (0,0,0)--(0.55,0.45,0.55);
				\draw[gray!100,dashed] (0,0,-0.35) ellipse(0.94cm and 0.15cm);
				\draw[gray!100,dashed] (0,0,0)--(1,1,-0.2);
				
				\begin{pgfonlayer}{foreground}
					\draw[->] (z)+(-40:0.25) arc (-40:170:0.25);
				\end{pgfonlayer}
				\node at (0,0.3,1.15) {$\widetilde{\gamma}$};
				\node at (0,0.2,0.9) {$\omega_N(f_1)$};
				\node at (1,0.4,-0.5) {$\omega_S$};
				\node at (1,-0.3,0.3) {$\omega_C$};
				\node at (0,0.1,0.3) {$\textcolor{gray!100}{\theta_1}$};
				\node at (0.1,0.24,0.1) {$\textcolor{gray!100}{\theta_2}$};
				
				
				
				\draw[->,>=stealth,gray!100] (0,0,0.2) arc [start angle=0,end angle=45,x radius=-0.3,y radius=0.22];
				
				\draw[->,>=stealth,gray!100] (0,0,0.1) arc [start angle=0,end angle=45,x radius=-3.5,y radius=0.5];
				
				\tikzmath{function absci(\t) {return sin(54+5*cos(6*\t r))*cos(\t r);};}
				\tikzmath{function ord(\t) {return sin(54+5*cos(6*\t r))*sin(\t r);};}
				\tikzmath{function cote(\t) {return cos(54+5*cos(6*\t r));};}
				
				\draw[red,thick] plot[domain=0:6.3,smooth,variable=\t,samples=500] ({absci(\t)},{ord(\t)},{cote(\t)});
				
				\tikzmath{function absci2(\t) {return sin(110+5*cos(6*\t r))*cos(\t r);};}
				\tikzmath{function ord2(\t) {return sin(110+5*cos(6*\t r))*sin(\t r);};}
				\tikzmath{function cote2(\t) {return cos(110+5*cos(6*\t r));};}
				
				\draw[blue,thick] plot[domain=0:6.3,smooth,variable=\t,samples=500] ({absci2(\t)},{ord2(\t)},{cote2(\t)});
			\end{tikzpicture}
			\caption{Representation of two interfaces (in red and blue) vortex cap solutions with $6$-fold symmetry.}
		\end{center}
	\end{figure}
	
	From now on, we fix 
	\begin{equation}\label{choice th1-th2}
		0<\theta_1<\theta_2<\pi,
	\end{equation}
	$\omega_N,\omega_C,\omega_S\in\mathbb{R}$ satisfying \eqref{constraint omega NCS} and consider a vortex cap solution close to $\overline{\Omega}_{\textnormal{\tiny{FC2}}}$ in the form
	$$\overline{\Omega}(t,\theta,\varphi)=\omega_N(f_1)\mathbf{1}_{0<\theta<\theta_1+f_1(t,\varphi)}+\omega_C\mathbf{1}_{\theta_1+f_1(t,\varphi)\leqslant\theta<\theta_2+f_2(t,\varphi)}+\omega_{S}\mathbf{1}_{\theta_2+f_2(t,\varphi)\leqslant\theta<\pi},$$
	with
	$$\forall k\in\{1,2\},\quad|f_k(t,\varphi)|\ll1.$$
	The quantity $\omega_N(f_1)$ is defined implicitely so that the Gauss constraint is satisfied, namely
	\begin{align*}
		0&=\int_{\mathbb{S}^2}\overline{\Omega}(t,\xi)d\sigma(\xi)\\
		&=\int_{0}^{2\pi}\left(\omega_N(f_1)\int_{0}^{\theta_1+f_1(t,\varphi)}\sin(\theta)d\theta+\omega_C\int_{\theta_2+f_2(t,\varphi)}^{\theta_1+f_1(t,\varphi)}\sin(\theta)d\theta+\omega_S\int_{\theta_2+f_2(t,\varphi)}^{\pi}\sin(\theta)d\theta\right)d\varphi\\
		&=\int_{0}^{2\pi}\Big[\omega_N(f_1)\big(1-\cos(\theta_1+f_1(t,\varphi)\big)+\omega_C\Big(\cos\big(\theta_2+f_2(t,\varphi)\big)-\cos\big(\theta_1+f_1(t,\varphi)\big)\Big)\\
		&\quad\qquad\qquad\qquad\qquad\qquad\qquad\qquad\qquad+\omega_S\big(1+\cos(\theta_2+f_2(t,\varphi)\big)\Big]d\varphi.
	\end{align*}
	As in Section \ref{sec eq1}, by means of the implicit function theorem, we can rigorously justify the definition of $\omega_N(f_1)$ and obtain
	\begin{equation}\label{omgN0 and domgN0 2}
		\omega_N(0)=\omega_N,\qquad d_{f_1}\omega_N(0)=0.
	\end{equation}
	For $k\in\{1,2\}$, the interface oscillating around $\theta=\theta_k$ can be parametrized by
	$$z_{k}(t,\varphi)\triangleq\begin{pmatrix}
		\sin\big(\theta_k+f_{k}(t,\varphi)\big)\cos(\varphi)\\
		\sin\big(\theta_k+f_{k}(t,\varphi)\big)\sin(\varphi)\\
		\cos\big(\theta_k+f_{k}(t,\varphi)\big)
	\end{pmatrix}.$$
	In view of \eqref{vorticity cap equations}, the parametrizations $z_1$ and $z_2$ must satisfy the following equations
	$$\forall k\in\{1,2\},\quad\partial_{t}z_{k}(t,\varphi)\cdot\big(J\partial_{\varphi}z_{k}(t,\varphi)\big)=\partial_{\varphi}\Big(\Psi\big(t,z_{k}(t,\varphi)\big)\Big).$$
	Proceeding as in Section \ref{sec eq1}, we obtain
	$$\partial_{t}z_{k}(t,\varphi)\cdot\big(J\partial_{\varphi}z_{k}(t,\varphi)\big)=\sin\big(\theta_k+f_{k}(t,\varphi)\big)\partial_{t}f_{k}(t,\varphi).$$
	Consequently, the unknowns $f_1$ and $f_2$ have to solve the following (coupled) system
	$$\forall k\in\{1,2\},\quad\partial_{t}f_{k}(t,\varphi)=\frac{\partial_{\varphi}\Big(\Psi\big(t,z_{k}(t,\theta)\big)\Big)}{\sin\big(\theta_0+f_{k}(t,\varphi)\big)}\cdot$$
	Now, the stream function writes
	\begin{align*}
		\Psi\big(t,z_{k}(t,\varphi)\big)&=\frac{\omega_{N}(f_1)}{4\pi}\int_{0}^{2\pi}\int_{0}^{\theta_1+f_{1}(t,\varphi')}\log\Big(D\big(\theta_k+f_k(t,\varphi),\theta',\varphi,\varphi'\big)\Big)\sin(\theta')d\theta'd\varphi'\\
		&\quad+\frac{\omega_{C}}{4\pi}\int_{0}^{2\pi}\int_{\theta_1+f_{1}(t,\varphi')}^{\theta_2+f_{2}(t,\varphi')}\log\Big(D\big(\theta_k+f_k(t,\varphi),\theta',\varphi,\varphi'\big)\Big)\sin(\theta')d\theta'd\varphi'\\
		&\quad+\frac{\omega_{S}}{4\pi}\int_{0}^{2\pi}\int_{\theta_2+f_2(t,\varphi')}^{\pi}\log\Big(D\big(\theta_k+f_k(t,\varphi),\theta',\varphi,\varphi'\big)\Big)\sin(\theta')d\theta'd\varphi'\\
		&\quad+\frac{\widetilde{\gamma}}{4\pi}\int_{0}^{2\pi}\int_{0}^{\pi}\log\Big(D\big(\theta_k+f_k(t,\varphi),\theta',\varphi,\varphi'\big)\Big)\sin(2\theta')d\theta'd\varphi'.
	\end{align*}
	Remark that the unperturbed stream function can be written as follows
	\begin{align*}
		\Psi_{\textnormal{\tiny{FC2}}}(\theta)&=\frac{\omega_{N}}{4\pi}\int_{0}^{2\pi}\int_{0}^{\theta_1}\log\Big(D(\theta,\theta',0,\varphi')\Big)\sin(\theta')d\theta'd\varphi'\\
		&\quad+\frac{\omega_{C}}{4\pi}\int_{\theta_1}^{\theta_2}\int_{\theta_0}^{\pi}\log\Big(D(\theta,\theta',0,\varphi')\Big)\sin(\theta')d\theta'd\varphi'\\
		&\quad+\frac{\omega_{S}}{4\pi}\int_{\theta_2}^{\pi}\int_{\theta_0}^{\pi}\log\Big(D(\theta,\theta',0,\varphi')\Big)\sin(\theta')d\theta'd\varphi'\\
		&\quad+\frac{\widetilde{\gamma}}{4\pi}\int_{0}^{2\pi}\int_{0}^{\pi}\log\Big(D(\theta,\theta',0,\varphi')\Big)\sin(2\theta')d\theta' d\varphi'.
	\end{align*}
	Making appeal to Chasles' relation, we can write
	\begin{align*}
		\Psi\big(t,z_{k}(t,\varphi)\big)&=\Psi_{\textnormal{\tiny{FC2}}}\big(\theta_k+f_k(t,\varphi)\big)+\Psi_{p,2}^{[1]}\{f_1,f_2\}\big(\theta_k+f_k(t,\varphi),\varphi\big)+\Psi_{p,2}^{[2]}\{f_1\}\big(\theta_k+f_k(t,\varphi),\varphi\big)\\
		&\triangleq\Psi\{f_1,f_2\}\big(\theta_k+f_k(t,\varphi),\varphi\big),\\
		\Psi_{p,2}^{[1]}\{f_1,f_2\}(\theta,\varphi)&\triangleq\frac{\omega_{N}-\omega_{C}}{4\pi}\int_{0}^{2\pi}\int_{\theta_1}^{\theta_1+f_{1}(t,\varphi')}\log\Big(D(\theta,\theta',\varphi,\varphi')\Big)\sin(\theta')d\theta'd\varphi'\\
		&\quad+\frac{\omega_{C}-\omega_{S}}{4\pi}\int_{0}^{2\pi}\int_{\theta_2}^{\theta_2+f_{2}(t,\varphi')}\log\Big(D(\theta,\theta',\varphi,\varphi')\Big)\sin(\theta')d\theta'd\varphi',\\
		\Psi_{p,2}^{[2]}\{f_1\}\big(\theta,\varphi\big)&\triangleq\frac{\omega_{N}(f_1)-\omega_N}{4\pi}\int_{0}^{2\pi}\int_{0}^{\theta_1+f_1(t,\varphi')}\log\big(D(\theta,\theta',\varphi,\varphi')\big)\sin(\theta')d\theta'd\varphi'.
	\end{align*}
	Therefore, the vortex cap equation \eqref{vorticity cap equations} becomes
	\begin{equation}\label{vorticity cap eq general ansatz2}
		\forall k\in\{1,2\},\quad\partial_{t}f_k(t,\varphi)=\frac{\partial_{\varphi}\Big(\Psi\{f_1,f_2\}\big(\theta_k+f_k(t,\varphi),\varphi\big)\Big)}{\sin\big(\theta_k+f_k(t,\varphi)\big)}\cdot
	\end{equation}
	We look for traveling solutions at speed $c\in\mathbb{R}$
	$$\forall k\in\{1,2\},\qquad f_{k}(t,\varphi)=f_{k}(\varphi-ct).$$
	Thus, we shall solve
	$$\mathscr{G}(c,f_1,f_2)=0,\qquad\mathscr{G}\triangleq(\mathscr{G}_1,\mathscr{G}_2),$$
	where
	$$\mathscr{G}_{k}(c,f_1,f_2)(\varphi)\triangleq c\,\partial_{\varphi}f_{k}(\varphi)+\frac{\partial_{\varphi}\Big(\Psi\{f_1,f_2\}\big(\theta_k+f_k(\varphi),\varphi\big)\Big)}{\sin\big(\theta_k+f_k(\varphi)\big)}\cdot$$
	Observe that
	$$\forall c\in\mathbb{R},\quad\mathscr{G}(c,0,0)=0.$$
	This leads again to implement bifurcation theory.
	\subsection{Spectral properties}
	We check here the hypothesis of Crandall-Rabinowitz Theorem.
	\begin{prop}\label{propreg2}
		Let $\alpha\in(0,1)$ and $\mathbf{m}\in\mathbb{N}^*.$ There exists $r>0$ such that
		\begin{enumerate}[label=(\roman*)]
			\item The function $\mathscr{G}:\mathbb{R}\times B_{r,\mathbf{m}}^{1+\alpha}\times B_{r,\mathbf{m}}^{1+\alpha}\rightarrow Y_{\mathbf{m}}^{\alpha}\times Y_{\mathbf{m}}^{\alpha}$ is well-defined and of class $C^1.$
			\item The partial derivative $\partial_{c}d_{(f_1,f_2)}\mathscr{G}:\mathbb{R}\times B_{r,\mathbf{m}}^{1+\alpha}\times B_{r,\mathbf{m}}^{1+\alpha}\rightarrow\mathcal{L}(X_{\mathbf{m}}^{1+\alpha}\times X_{\mathbf{m}}^{1+\alpha},Y_{\mathbf{m}}^{\alpha}\times Y_{\mathbf{m}}^{\alpha})$ exists and is continuous.
			\item At the equilibrium $(f_1,f_2)=(0,0),$ the linearized operator admits the following Fourier representation 
			\begin{align}
				&d_{(f_1,f_2)}\mathscr{G}(c,0,0)\left[\sum_{n=1}^{\infty}h_{n}^{(1)}\cos(\mathbf{m}n\varphi),\sum_{n=1}^{\infty}h_{n}^{(2)}\cos(\mathbf{m}n\varphi)\right]\nonumber\\
				&=\sum_{n=1}^{\infty}\mathbf{m}nM_{\mathbf{m}n}(c,\theta_1,\theta_2)\begin{pmatrix}
					h_{n}^{(1)}\\
					h_{n}^{(2)}
				\end{pmatrix}\sin(\mathbf{m}n\varphi),\label{split dfG2}
			\end{align}
			with 
			\begin{equation}\label{def matrix}
				M_{n}(c,\theta_1,\theta_2)\triangleq\begin{pmatrix}
					-c+\frac{\omega_{N}-\omega_{C}}{2n}-\frac{\omega_{N}}{2\cos^2\left(\frac{\theta_1}{2}\right)}+\widetilde{\gamma} & \frac{\omega_C-\omega_S}{2n}\frac{\sin(\theta_2)}{\sin(\theta_1)}\tan^n\left(\frac{\theta_1}{2}\right)\cot^n\left(\frac{\theta_2}{2}\right)\vspace{0.2cm}\\
					\frac{\omega_N-\omega_C}{2n}\frac{\sin(\theta_1)}{\sin(\theta_2)}\tan^n\left(\frac{\theta_1}{2}\right)\cot^n\left(\frac{\theta_2}{2}\right)& -c+\frac{\omega_{C}-\omega_{S}}{2n}+\frac{\omega_{S}}{2\sin^{2}\left(\frac{\theta_2}{2}\right)}+\widetilde{\gamma}
				\end{pmatrix}.
			\end{equation}
			In addition, if $c\not\in\left\lbrace\widetilde{\gamma}-\tfrac{\omega_N}{2\cos^2\left(\frac{\theta_1}{2}\right)},\widetilde{\gamma}+\tfrac{\omega_S}{2\sin^2\left(\frac{\theta_2}{2}\right)}\right\rbrace,$ then the operator $d_{(f_1,f_2)}\mathscr{G}(c,0,0):X_{\mathbf{m}}^{1+\alpha}\times X_{\mathbf{m}}^{1+\alpha}\rightarrow Y_{\mathbf{m}}^{\alpha}\times Y_{\mathbf{m}}^{\alpha}$ is of Fredholm type with index zero.
		\end{enumerate}
	\end{prop}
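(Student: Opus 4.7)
The strategy is to mimic, component by component, the analysis of Proposition \ref{propreg1}, paying attention to the new feature that each $\mathscr{G}_k$ depends on both unknowns $f_1,f_2$ through the coupled perturbed stream function $\Psi_{p,2}\{f_1,f_2\}$. Writing $\mathscr{G}_k(c,f_1,f_2)$ in the same form as in Section \ref{sec eq1}, namely
$$
\mathscr{G}_k(c,f_1,f_2)(\varphi)=c f_k'(\varphi)+f_k'(\varphi)\tfrac{(\partial_\theta\Psi\{f_1,f_2\})(\theta_k+f_k(\varphi),\varphi)}{\sin(\theta_k+f_k(\varphi))}+\tfrac{(\partial_\varphi\Psi\{f_1,f_2\})(\theta_k+f_k(\varphi),\varphi)}{\sin(\theta_k+f_k(\varphi))},
$$
I would split the contributions to $\Psi_{p,2}$ into a \emph{self-interaction} part (the integral over $[\theta_k,\theta_k+f_k(\varphi')]$, evaluated at $\theta=\theta_k+f_k(\varphi)$) and a \emph{cross-interaction} part (the integral over $[\theta_j,\theta_j+f_j(\varphi')]$ with $j\neq k$). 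The self-interaction pieces have the same singular structure as the one-interface case, since the sine factor $\sin(\theta_k+f_k(\varphi))$ stays bounded below as in \eqref{lower bound sin th0f}; hence the identical kernel analysis (integration by parts on $\theta'$, potential-theoretic Propositions used in Proposition \ref{propreg1}) yields the $C^1$ regularity. The cross-interaction pieces are actually easier because $|\theta_1-\theta_2|>0$ ensures that $D(\theta_k+f_k(\varphi),\theta_j+f_j(\varphi'),\varphi,\varphi')$ is uniformly bounded below for small $r$; the kernels are then smooth in all their arguments, and the maps $(f_1,f_2)\mapsto(\cdot)$ are smooth by standard composition and dominated convergence. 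This gives (i); (ii) follows because $\partial_c d_{(f_1,f_2)}\mathscr{G}[h_1,h_2]=(\partial_\varphi h_1,\partial_\varphi h_2)$ is already continuous, as in \eqref{dcF}.

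For (iii), I linearize at $(0,0)$. Since $\Psi_{p,2}\{0,0\}=0$, only four blocks survive. The $k=j$ diagonal blocks contain the multiplication term
$$
\tfrac{1}{\sin(\theta_k)}\partial_\varphi\bigl(\partial_\theta\Psi_{\textnormal{\tiny{FC2}}}(\theta_k)\,h_k(\varphi)\bigr),
$$
for which \eqref{dttPsi tt1 tt2} gives coefficients $\omega_N\tan(\theta_1/2)/\sin(\theta_1)-\widetilde\gamma=\tfrac{\omega_N}{2\cos^2(\theta_1/2)}-\widetilde\gamma$ and $-\omega_S\cot(\theta_2/2)/\sin(\theta_2)-\widetilde\gamma=-\tfrac{\omega_S}{2\sin^2(\theta_2/2)}-\widetilde\gamma$. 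The remaining four blocks come from linearising the integral terms in $\Psi_{p,2}$ in $f_1$ and $f_2$ and then evaluating at $\theta_1$ or $\theta_2$; each is a convolution operator with kernel $\log D(\theta_k,\theta_j,\varphi-\varphi')$. Using the integral evaluation given in Lemma \ref{lem int}, namely Fourier coefficients of $\varphi\mapsto\log D(\theta,\theta',0,\varphi)$ producing the closed form $-\tfrac{1}{n}\tan^n(\theta_{\min}/2)\cot^n(\theta_{\max}/2)$ for $n\geqslant 1$ (with $\theta_k=\theta'$ for the diagonal case), I would expand each $h_k$ in its cosine Fourier series and collect the coefficients in front of $\sin(\mathbf{m}n\varphi)$, obtaining exactly the matrix $M_{\mathbf{m}n}(c,\theta_1,\theta_2)$ displayed in \eqref{def matrix}.

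Finally, to establish the Fredholm property, I write
$$
d_{(f_1,f_2)}\mathscr{G}(c,0,0)=\mathcal{D}(c)\partial_\varphi + \mathcal{K},
$$
where
$$
\mathcal{D}(c)\triangleq\mathrm{diag}\!\left(c+\tfrac{\omega_N}{2\cos^2(\theta_1/2)}-\widetilde\gamma,\; c-\tfrac{\omega_S}{2\sin^2(\theta_2/2)}-\widetilde\gamma\right),
$$
(the constants are obtained by collecting the non-integral contributions on each component) and $\mathcal{K}$ gathers the four integral operators, each of which is of convolution type with a logarithmic kernel. When $c\notin\{\widetilde\gamma-\tfrac{\omega_N}{2\cos^2(\theta_1/2)},\widetilde\gamma+\tfrac{\omega_S}{2\sin^2(\theta_2/2)}\}$ the matrix $\mathcal{D}(c)$ is invertible, so $\mathcal{D}(c)\partial_\varphi:X_{\mathbf{m}}^{1+\alpha}\times X_{\mathbf{m}}^{1+\alpha}\to Y_{\mathbf{m}}^\alpha\times Y_{\mathbf{m}}^\alpha$ is an isomorphism; a repetition of the Hilbert-transform/kernel estimate of Proposition \ref{propreg1} (applied to each of the four components, the cross ones being actually smoothing since their kernels are smooth) shows $\mathcal{K}:C^{1+\alpha}\to C^{\beta}$ for some $\beta>\alpha$, hence compact into $C^\alpha$. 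Invoking \cite[Cor.~5.9]{CR21} then yields Fredholmness with index zero.

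The main obstacle is not the abstract scheme, which parallels the one-interface case, but the bookkeeping of the four integral blocks in (iii): in particular the careful use of Lemma \ref{lem int} to recover the precise constants appearing in \eqref{def matrix}, including the $\sin(\theta_2)/\sin(\theta_1)$ asymmetry coming from the factor $\sin\theta'$ in the kernel and the $1/\sin(\theta_k)$ prefactor in $\mathscr{G}_k$. Everything else is a direct and essentially routine extension of Proposition \ref{propreg1}.
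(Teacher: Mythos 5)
Your proposal is correct and follows essentially the same route as the paper: the self/cross interaction splitting for (i) (with the cross kernels non-singular since $\theta_1\neq\theta_2$), the use of \eqref{dttPsi tt1 tt2} and Lemma \ref{lem int} to assemble $M_{\mathbf{m}n}$ in (iii), and the decomposition into an invertible diagonal derivative part plus a compact part (Hilbert transforms on the diagonal, smoothing convolutions off the diagonal) for the Fredholm property. The constants, including the $\sin(\theta_\ell)/\sin(\theta_k)$ asymmetry, match the paper's computation.
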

	\begin{proof}
		\textbf{(i)} The proof is very close to Proposition \ref{propreg1}-(i). Indeed, the functional involves terms corresponding to the self-interaction of each boundary with itself, which correspond to the one-interface analysis. It also involves new terms corresponding to the interaction between both boundaries, but which are non-singular (smooth kernels). Therefore, we omit the proof and just give the expression of the linearized operator. For $k\in\{1,2\},$
		\begin{align*}
			&d_{f_k}\mathscr{G}_{k}(c,f_1,f_2)[h_k](\varphi)\\
			&=c\,\partial_{\varphi}h_k(\varphi)+h_k(\varphi)\frac{\cos\big(\theta_k+f_k(\varphi)\big)}{\sin^{2}\big(\theta_k+f_k(\varphi)\big)}\partial_{\varphi}\Big(\Psi_{\textnormal{\tiny{FC}}}\big(\theta_k+f_k(\varphi)\big)+\big(\Psi_{p,2}^{[1]}\{f_1,f_2\}+\Psi_{p,2}^{[2]}\{f_1\}\big)\big(\theta_k+f_k(\varphi),\varphi\big)\Big)\\
			&\quad+\frac{1}{\sin\big(\theta_k+f_k(\varphi)\big)}\partial_{\varphi}\Big(h_k(\varphi)\Big[\partial_{\theta}\Psi_{\textnormal{\tiny{FC}}}\big(\theta_k+f_k(\varphi)\big)+\big(\partial_{\theta}\Psi_{p,2}^{[1]}\{f_1,f_2\}+\partial_{\theta}\Psi_{p,2}^{[2]}\{f_1\}\big)\big(\theta_k+f_k(\varphi),\varphi\big)\Big]\Big)\\
			&\quad+\frac{1}{\sin\big(\theta_k+f_k(\varphi)\big)}\partial_{\varphi}\Big(\big(d_{f_k}\Psi_{p,2}^{[1]}\{f_1,f_2\}[h_k]\big)\big(\theta_k+f_k(\varphi),\varphi\big)+\delta_{k,1}d_{f_k}\big(\Psi_{p,2}^{[2]}\{f_1\}[h_k]\big)\big(\theta_k+f_k(\varphi),\varphi\big)\Big),
		\end{align*}
		where $\delta_{k,1}$ is the kronecker symbol and
		\begin{align*}
			d_{f_{3-k}}\mathscr{G}_{k}(c,f_1,f_2)[h_{3-k}](\varphi)&=\frac{1}{\sin\big(\theta_k+f_k(\varphi)\big)}\partial_{\varphi}\Big(\big(d_{f_{3-k}}\Psi_{p,2}^{[1]}\{f_1,f_2\}[h_{3-k}]\big)\big(\theta_k+f_k(\varphi),\varphi\big)\Big)\\
			&\quad+\frac{\delta_{3-k,1}}{\sin\big(\theta_k+f_k(\varphi)\big)}\partial_{\varphi}\Big(\big(d_{f_{3-k}}\Psi_{p,2}^{[2]}\{f_1\}[h]\big)\big(\theta_k+f_k(\varphi),\varphi\big)\Big).
		\end{align*}
		If we denote $(\omega_{N},\omega_{C},\omega_{S})=(\omega_{1},\omega_2,\omega_3)$, then for $k\in\{1,2\},$ we have,
		\begin{align*}
			\big(d_{f_k}\Psi_{p,2}^{[1]}\{f_1,f_2\}[h_k]\big)(\theta,\varphi)=\frac{\omega_{k}-\omega_{k+1}}{4\pi}\int_{0}^{2\pi}h_k(\varphi')\log\Big(D\big(\theta,\theta_k+f_k(\varphi'),\varphi,\varphi'\big)\Big)\sin\big(\theta_k+f_k(\varphi')\big)d\varphi'.	
		\end{align*}
		Moreover,
		\begin{align*}
			\big(d_{f_1}\Psi_{p,2}^{[2]}\{f_1\}[h_1]\big)(\theta,\varphi)&=\frac{d_{f_1}\omega_N(f_1)[h_1]}{4\pi}\int_{0}^{2\pi}\int_{0}^{\theta_1+f_1(\varphi')}\log\big(D(\theta,\theta',\varphi,\varphi')\big)\sin(\theta')d\theta'd\varphi'\\
			&\quad+\frac{\omega_N(f_1)-\omega_N}{4\pi}\int_{0}^{2\pi}h_{1}(\varphi')\log\Big(D\big(\theta,\theta_1+f_1(\varphi'),\varphi,\varphi'\big)\Big)\sin\big(\theta_1+f_1(\varphi')\big)d\varphi'.
		\end{align*}
		\textbf{(ii)} Immediate since 
		\begin{equation}\label{dcF2}
			\partial_{c}f_{(f_1,f_2)}\mathscr{G}(c,f_1,f_2)[h_1,h_2]=\begin{pmatrix}
				\partial_{\varphi}h_1 & 0\\
				0 & \partial_{\varphi}h_2
			\end{pmatrix}.
		\end{equation}
		\textbf{(iii)} We assume now that $(f_1,f_2)=(0,0).$ Using \eqref{omgN0 and domgN0 2} and $\Psi_{p,2}^{[1]}\{0,0\}=0,$ we have
		\begin{align*}
			d_{f_k}\mathscr{G}_k(c,0,0)[h_k](\varphi)&=c\,\partial_{\varphi}h_k(\varphi)+\frac{1}{\sin(\theta_k)}\partial_{\varphi}\Big(\partial_{\theta}\Psi_{\textnormal{\tiny{FC}}}(\theta_k)h_k(\varphi)+\big(d_{f_k}\Psi_{p,2}^{[1]}\{0,0\}[h_k]\big)(\theta_k,\varphi)\Big),\\
			d_{f_{3-k}}\mathscr{G}_{k}(c,0,0)[h_{3-k}](\varphi)&=\frac{1}{\sin(\theta_{k})}\partial_{\varphi}\Big(\big(d_{f_{3-k}}\Psi_{p,2}^{[1]}\{0,0\}[h_{3-k}]\big)(\theta_k,\varphi)\Big).
		\end{align*}
		From \eqref{dttPsi tt1 tt2}, we deduce
		\begin{align}
			\frac{1}{\sin(\theta_1)}\partial_{\varphi}\Big(\partial_{\theta}\Psi_{\textnormal{\tiny{FC}}}(\theta_1)h_1\Big)&=\left(\frac{\omega_{N}\tan\left(\frac{\theta_1}{2}\right)}{\sin(\theta_1)}-\widetilde{\gamma}\right)\partial_{\varphi}h_1\nonumber\\
			&=\left(\frac{\omega_{N}}{2\cos^2\left(\frac{\theta_1}{2}\right)}-\widetilde{\gamma}\right)\partial_{\varphi}h_1\nonumber\\
			&=\left(\widetilde{\gamma}-\frac{\omega_{N}}{2\cos^2\left(\frac{\theta_1}{2}\right)}\right)\sum_{n=1}^{\infty}\mathbf{m}nh_n^{(1)}\sin(\mathbf{m}n\varphi)\label{dphidttPsiFC2-1}
		\end{align}
		and
		\begin{align}
			\frac{1}{\sin(\theta_2)}\partial_{\varphi}\Big(\partial_{\theta}\Psi_{\textnormal{\tiny{FC}}}(\theta_2)h_2\Big)&=-\left(\frac{\omega_{S}\cot\left(\frac{\theta_2}{2}\right)}{\sin(\theta_2)}+\widetilde{\gamma}\right)\partial_{\varphi}h_2\nonumber\\
			&=-\left(\frac{\omega_{S}}{2\sin^2\left(\frac{\theta_2}{2}\right)}+\widetilde{\gamma}\right)\partial_{\varphi}h_2\nonumber\\
			&=\left(\frac{\omega_{S}}{2\sin^2\left(\frac{\theta_2}{2}\right)}+\widetilde{\gamma}\right)\sum_{n=1}^{\infty}\mathbf{m}nh_n^{(2)}\sin(\mathbf{m}n\varphi).\label{dphidttPsiFC2-2}
		\end{align}
		After straightforward simplifications and using Lemma \ref{lem int}, we get for $k,\ell\in\{1,2\},$
		\begin{align*}
			\big(d_{f_\ell}\Psi_{p,2}^{[1]}\{0,0\}[h_\ell]\big)(\theta_k,\varphi)&=\frac{\omega_{\ell}-\omega_{\ell+1}}{4\pi}\sin(\theta_\ell)\int_{0}^{2\pi}h_\ell(\varphi')\log\Big(D(\theta_k,\theta_\ell,\varphi,\varphi')\Big)d\varphi'\\
			&=\frac{\omega_{\ell}-\omega_{\ell+1}}{2}\sin(\theta_\ell)\sum_{n=1}^{\infty}h_n^{(\ell)}I_{\mathbf{m}n}(\theta_k,\theta_\ell)\cos(\mathbf{m}n\varphi)\\
			&=-\frac{\omega_{\ell}-\omega_{\ell+1}}{2}\sin(\theta_\ell)\sum_{n=1}^{\infty}\frac{h_n^{(\ell)}}{\mathbf{m}n}\tan^{\mathbf{m}n}\left(\tfrac{\min(\theta_k,\theta_\ell)}{2}\right)\cot^{\mathbf{m}n}\left(\tfrac{\max(\theta_k,\theta_\ell)}{2}\right)\cos(\mathbf{m}n\varphi).
		\end{align*}
		Therefore,
		$$\frac{\partial_{\varphi}\Big(\big(d_{f_\ell}\Psi_{p,2}^{[1]}\{0,0\}[h_\ell]\big)(\theta_k,\varphi)\Big)}{\sin(\theta_k)}=\frac{\omega_{\ell}-\omega_{\ell+1}}{2}\frac{\sin(\theta_\ell)}{\sin(\theta_k)}\sum_{n=1}^{\infty}h_n^{(\ell)}\tan^{\mathbf{m}n}\left(\tfrac{\min(\theta_k,\theta_\ell)}{2}\right)\cot^{\mathbf{m}n}\left(\tfrac{\max(\theta_k,\theta_\ell)}{2}\right)\sin(\mathbf{m}n\varphi).$$
		Putting together the foregoing calculations, we get \eqref{split dfG2}-\eqref{def matrix}. Now, denoting 
		$$\mathcal{Q}(\varphi)\triangleq\log\big(1-\cos(\theta_1)\cos(\theta_2)-\sin(\theta_1)\sin(\theta_2)\cos(\varphi)\big),$$
		we have
		\begin{align}
			&d_{(f_1,f_2)}\mathscr{G}(c,0,0)=I+K,\nonumber\\
			&I\triangleq\begin{pmatrix}
				\left(c+\frac{\omega_{N}}{2\cos^2\left(\frac{\theta_1}{2}\right)}-\widetilde{\gamma}\right)\partial_{\varphi} & 0\\
				0& \left(c-\frac{\omega_{S}}{2\sin^2\left(\frac{\theta_2}{2}\right)}-\widetilde{\gamma}\right)\partial_{\varphi}
			\end{pmatrix},\nonumber\\
			&K\triangleq\begin{pmatrix}
				\frac{\omega_{N}-\omega_C}{2}\mathcal{H} & \frac{\omega_C-\omega_S}{2}\frac{\sin(\theta_2)}{\sin(\theta_1)}\partial_{\varphi}\mathcal{Q}\ast\cdot\\
				\frac{\omega_N-\omega_C}{2}\frac{\sin(\theta_1)}{\sin(\theta_2)}\partial_{\varphi}\mathcal{Q}\ast\cdot & \frac{\omega_{C}-\omega_S}{2}\mathcal{H}
			\end{pmatrix}.\label{split dfG}\end{align}
		If $c\not\in\left\lbrace\widetilde{\gamma}-\tfrac{\omega_N}{2\cos^2\left(\frac{\theta_1}{2}\right)},\widetilde{\gamma}+\tfrac{\omega_S}{2\sin^2\left(\frac{\theta_2}{2}\right)}\right\rbrace,$ then $I:X_{\mathbf{m}}^{1+\alpha}\times X_{\mathbf{m}}^{1+\alpha}\rightarrow Y_{\mathbf{m}}^{\alpha}\times Y_{\mathbf{m}}^{\alpha}$ is an isomorphism. We have already studied the compact property of the Hilbert transform in the proof of Proposition \ref{prop hyp CR1}, so we are left with the anti-diagonal terms. Actually, the corresponding symbol decays exponentially fast in $n$, which implies that $\partial_{\varphi}\mathcal{Q}\ast\cdot$ is smoothing at every order (so a fortiori compact in the considered functional framework). Thus, the operator $K:X_{\mathbf{m}}^{1+\alpha}\times X_{\mathbf{m}}^{1+\alpha}\rightarrow Y_{\mathbf{m}}^{\alpha}\times Y_{\mathbf{m}}^{\alpha}$ is compact. We deduce the desired Fredholmness property.
	\end{proof}
	We shall now study the spectrum.
	\begin{lem}\label{lem:spectrum}
		Let $\widetilde{\gamma}\in\mathbb{R}.$ There exists $N(\theta_1,\theta_2)\triangleq N(\theta_1,\theta_2,\omega_N,\omega_S,\omega_C)\in\mathbb{N}^*$ such that for any $n\in\mathbb{N}^*$ with $n\geqslant N(\theta_1,\theta_2),$ there exist two velocities
		\begin{align}\label{def cm2}
			&c_{n}^{\pm}(\widetilde{\gamma},\theta_1,\theta_2)\triangleq\widetilde{\gamma}+\frac{\omega_{S}}{4\sin^{2}\left(\frac{\theta_2}{2}\right)}-\frac{\omega_{N}}{4\cos^2\left(\frac{\theta_1}{2}\right)}+\frac{\omega_{N}-\omega_{S}}{4n}\\
			&\quad\pm\frac{1}{4}\sqrt{\left(\frac{\omega_{S}}{\sin^{2}\left(\frac{\theta_2}{2}\right)}+\frac{\omega_{N}}{\cos^{2}\left(\frac{\theta_1}{2}\right)}-\frac{\omega_{N}+\omega_S-2\omega_C}{n}\right)^2+\frac{1}{n^2}(\omega_{N}-\omega_C)(\omega_C-\omega_S)\tan^{2n}\left(\tfrac{\theta_1}{2}\right)\cot^{2n}\left(\tfrac{\theta_2}{2}\right)}\nonumber
		\end{align}
		for which the matrix $M_{n}\big(c_{n}^{\pm}(\widetilde{\gamma},\theta_1,\theta_2),\theta_1,\theta_2\big)$ is singular. The sequences $\big(c_{n}^{\pm}(\widetilde{\gamma},\theta_1,\theta_2)\big)_{n\geqslant N(\theta_1,\theta_2)}$ are strictly monotone and
		\begin{equation}\label{def limit set}
			\mathbb{L}\triangleq\left\lbrace\lim_{n\to\infty}c_{n}^{+}(\widetilde{\gamma},\theta_1,\theta_2)\,,\,\lim_{n\to\infty}c_{n}^{-}(\widetilde{\gamma},\theta_1,\theta_2)\right\rbrace=\left\lbrace\widetilde{\gamma}-\frac{\omega_{N}}{2\cos^2\left(\frac{\theta_1}{2}\right)}\,,\,\widetilde{\gamma}+\frac{\omega_{S}}{2\sin^2\left(\frac{\theta_2}{2}\right)}\right\rbrace.
		\end{equation}
		Moreover,
		\begin{enumerate}
			\item If
			\begin{equation}\label{additional constraint}
				\omega_{S}\cos^2\left(\tfrac{\theta_1}{2}\right)+\omega_{N}\sin^{2}\left(\tfrac{\theta_2}{2}\right)\neq 0.
			\end{equation}
			then $|\mathbb{L}|=2$ and the following equations have no solution
			\begin{equation}\label{no spec coll}
				c_{p}^{+}(\widetilde{\gamma},\theta_1,\theta_2)=c_{q}^{-}(\widetilde{\gamma},\theta_1,\theta_2),\qquad p,q\geqslant N(\theta_1,\theta_2).
			\end{equation}
			\item If \begin{equation}\label{additional constraint2}
				\omega_{S}\cos^2\left(\tfrac{\theta_1}{2}\right)+\omega_{N}\sin^{2}\left(\tfrac{\theta_2}{2}\right)=0.
			\end{equation}
			then
			$$|\mathbb{L}|=1,\qquad\omega_N+\omega_S=\omega_C\neq 0,\qquad\omega_N\omega_S<0.$$
			In particular, \eqref{def cm2} simplifies into
			\begin{equation}\label{def cn+- alter}
				c_{n}^{\pm}(\widetilde{\gamma},\theta_1,\theta_2)=\widetilde{\gamma}+\frac{\omega_N-\omega_S}{4n}-\frac{\omega_N}{2\cos^2\left(\frac{\theta_1}{2}\right)}\pm\frac{1}{4n}\sqrt{\omega_C^2-\omega_N\omega_S\tan^{2n}\left(\tfrac{\theta_1}{2}\right)\cot^{2n}\left(\tfrac{\theta_2}{2}\right)}.
			\end{equation}
			In addition, for any $\mathbf{m}\in\mathbb{N}$ with $\mathbf{m}\geqslant N(\theta_1,\theta_2),$ under one of the additional constraints
			\begin{itemize}
				\item [$\mathbf{(H1+)}$] $\omega_C>0,\qquad\omega_N>0,\qquad\omega_S<0,$
				\item [$\mathbf{(H2+)}$] $\omega_C>0,\qquad\omega_N<0,\qquad\omega_S>0\qquad\textnormal{and}\qquad2\cos^2\left(\tfrac{\theta_1}{2}\right)>\sin^2\left(\tfrac{\theta_2}{2}\right),$
				\item [$\mathbf{(H3+)}$] $\omega_C<0,\qquad\omega_N>0,\qquad\omega_S<0,$
				\item [$\mathbf{(H4+)}$] $\omega_C<0,\qquad\omega_N<0,\qquad\omega_S>0\qquad\textnormal{and}\qquad2\sin^2\left(\tfrac{\theta_2}{2}\right)>\cos^2\left(\tfrac{\theta_1}{2}\right),$
			\end{itemize}
			the following equations have no solution
			$$c_{\mathbf{m}}^{+}(\widetilde{\gamma},\theta_1,\theta_2)=c_{k\mathbf{m}}^{-}(\widetilde{\gamma},\theta_1,\theta_2),\qquad
			k\in\mathbb{N}^*.$$
			And under one of the additional constraints
			\begin{itemize}
				\item [$\mathbf{(H1-)}$] $\omega_C>0,\qquad\omega_N<0,\qquad\omega_S>0,$
				\item [$\mathbf{(H2-)}$] $\omega_C>0,\qquad\omega_N>0,\qquad\omega_S<0\qquad\textnormal{and}\qquad2\sin^2\left(\tfrac{\theta_2}{2}\right)>\cos^2\left(\tfrac{\theta_1}{2}\right),$
				\item [$\mathbf{(H3-)}$] $\omega_C<0,\qquad\omega_N<0,\qquad\omega_S>0,$
				\item [$\mathbf{(H4-)}$] $\omega_C<0,\qquad\omega_N>0,\qquad\omega_S<0\qquad\textnormal{and}\qquad2\cos^2\left(\tfrac{\theta_1}{2}\right)>\sin^2\left(\tfrac{\theta_2}{2}\right),$
			\end{itemize}
			the following equations have no solution
			$$c_{k\mathbf{m}}^{+}(\widetilde{\gamma},\theta_1,\theta_2)=c_{\mathbf{m}}^{-}(\widetilde{\gamma},\theta_1,\theta_2),\qquad
			k\in\mathbb{N}^*.$$
			This means that there is no $\mathbf{m}$-fold spectral collision, i.e. the $\mathbf{m}$-fold spectrum is simple.
		\end{enumerate}
	\end{lem}
	
	\begin{figure}[!h]
		\null\hfill
		\subfigure[$2\sin^2\left(\tfrac{\theta_2}{2}\right)>\cos^2\left(\tfrac{\theta_1}{2}\right).$\label{fig1a}]{%
			\includegraphics[width=8cm]{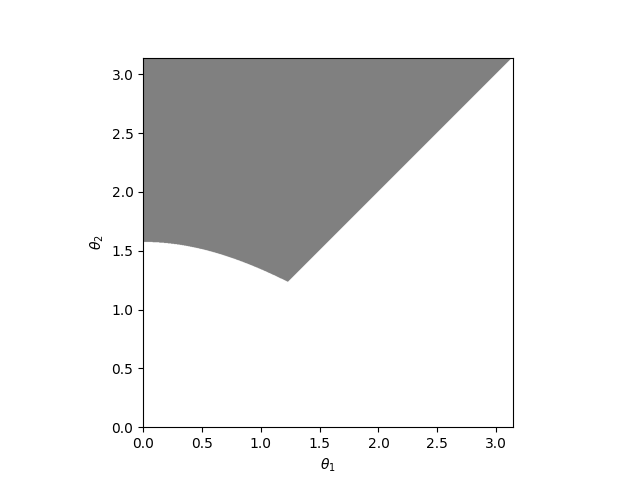}}
		\hfill
		\subfigure[$2\cos^2\left(\tfrac{\theta_1}{2}\right)>\sin^2\left(\tfrac{\theta_2}{2}\right).$\label{fig1b}]{%
			\includegraphics[width=8cm]{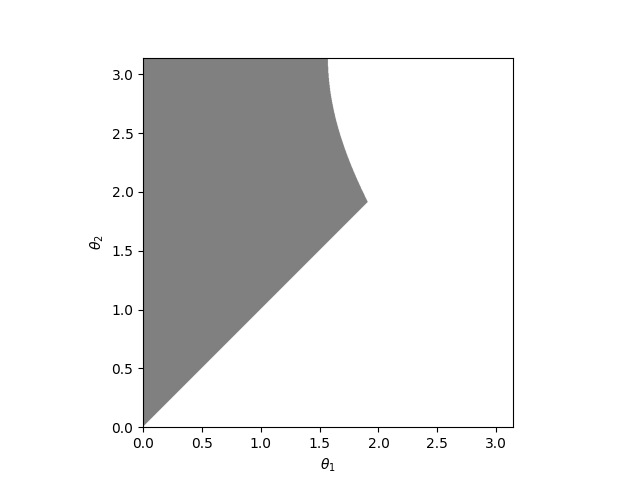}}
		\hfill\null
		\caption{The shaded regions represent the admissible couples $(\theta_1,\theta_2)\in(0,\pi)^2$ with $\theta_1<\theta_2.$}\label{fig1}
	\end{figure}

	\begin{proof}
		From \eqref{def matrix}, we have that the determinant of $M_{n}(c,\theta_1,\theta_2)$ is
		\begin{align}
			&\det\big(M_{n}(c,\theta_1,\theta_2)\big)\triangleq c^2-\beta_{n}(\widetilde{\gamma},\theta_1,\theta_2)c+\gamma_{n}(\widetilde{\gamma},\theta_1,\theta_2)\in\mathbb{R}_2[c],\nonumber\\
			&\beta_{n}(\widetilde{\gamma},\theta_1,\theta_2)\triangleq\frac{\omega_{N}-\omega_{S}}{2n}-\frac{\omega_{N}}{2\cos^2\left(\frac{\theta_1}{2}\right)}+\frac{\omega_{S}}{2\sin^{2}\left(\frac{\theta_2}{2}\right)}+2\widetilde{\gamma},\label{btn}\\
			&\gamma_{n}(\widetilde{\gamma},\theta_1,\theta_2)\triangleq \left(\frac{\omega_{C}-\omega_{S}}{2n}+\frac{\omega_{S}}{2\sin^{2}\left(\frac{\theta_2}{2}\right)}+\widetilde{\gamma}\right)\left(\frac{\omega_{N}-\omega_{C}}{2n}-\frac{\omega_{N}}{2\cos^2\left(\frac{\theta_1}{2}\right)}+\widetilde{\gamma}\right)\nonumber\\
			&\qquad\qquad\qquad-\frac{1}{4n^2}(\omega_{N}-\omega_C)(\omega_C-\omega_S)\tan^{2n}\left(\tfrac{\theta_1}{2}\right)\cot^{2n}\left(\tfrac{\theta_2}{2}\right).\label{gamn}
		\end{align}
		The discriminant of the previous polynomial is
		\begin{align*}
			\Delta_{n}(\theta_1,\theta_2)&\triangleq \beta_{n}^2(\widetilde{\gamma},\theta_1,\theta_2)-4\gamma_{n}(\widetilde{\gamma},\theta_1,\theta_2)\\
			&=\frac{1}{4}\left[\left(\frac{\omega_{S}}{\sin^{2}\left(\frac{\theta_2}{2}\right)}+\frac{\omega_{N}}{\cos^{2}\left(\frac{\theta_1}{2}\right)}-\frac{\omega_{N}+\omega_S-2\omega_C}{n}\right)^2+\frac{1}{n^2}(\omega_{N}-\omega_C)(\omega_C-\omega_S)\tan^{2n}\left(\tfrac{\theta_1}{2}\right)\cot^{2n}\left(\tfrac{\theta_2}{2}\right)\right].
		\end{align*}
		Notice that $\Delta_{n}(\theta_1,\theta_2)$ is independent of $\widetilde{\gamma}.$ 
		We shall now prove that
		\begin{equation}\label{posi discri}
			\exists\, N(\theta_1,\theta_2)\in\mathbb{N}^*,\quad\forall n\in\mathbb{N},\quad n\geqslant N(\theta_1,\theta_2)\quad\Rightarrow\quad\Delta_{n}(\theta_1,\theta_2)>0.
		\end{equation}
		Assuming that \eqref{posi discri} is true, then we conclude that, for $n$ large enough, we have two distinct real roots
		\begin{align*}
			c_{n}^{\pm}(\widetilde{\gamma},\theta_1,\theta_2)&\triangleq\frac{1}{2}\beta_{n}(\widetilde{\gamma},\theta_1,\theta_2)\pm\frac{1}{2}\sqrt{\Delta_{n}(\theta_1,\theta_2)}\\
			&=\widetilde{\gamma}+\frac{\omega_{S}}{4\sin^{2}\left(\frac{\theta_2}{2}\right)}-\frac{\omega_{N}}{4\cos^2\left(\frac{\theta_1}{2}\right)}+\frac{\omega_{N}-\omega_{S}}{4n}\\
			&\pm\frac{1}{4}\sqrt{\left(\frac{\omega_{S}}{\sin^{2}\left(\frac{\theta_2}{2}\right)}+\frac{\omega_{N}}{\cos^{2}\left(\frac{\theta_1}{2}\right)}-\frac{\omega_{N}+\omega_S-2\omega_C}{n}\right)^2+\frac{1}{n^2}(\omega_{N}-\omega_C)(\omega_C-\omega_S)\tan^{2n}\left(\tfrac{\theta_1}{2}\right)\cot^{2n}\left(\tfrac{\theta_2}{2}\right)}\nonumber.
		\end{align*}
		\textbf{1.} First assume that \eqref{additional constraint} holds. 
		From the proof of Lemma \ref{lem int}, we know that $\tan\left(\tfrac{\theta_1}{2}\right)\cot\left(\tfrac{\theta_2}{2}\right)<1$, then
		\begin{equation}\label{decay geo}
			\forall k\in\mathbb{N},\quad\frac{1}{n^2}(\omega_{N}-\omega_C)(\omega_C-\omega_S)\tan^{2n}\left(\tfrac{\theta_1}{2}\right)\cot^{2n}\left(\tfrac{\theta_2}{2}\right)\underset{n\to\infty}{=}O_{\theta_1,\theta_2}\left(\frac{1}{n^{k}}\right).
		\end{equation}
		Then,
		\begin{equation}\label{lim discri}
			\Delta_{\infty}(\theta_1,\theta_2)=\frac{1}{4}\left(\frac{\omega_{S}}{\sin^{2}\left(\frac{\theta_2}{2}\right)}+\frac{\omega_{N}}{\cos^2\left(\frac{\theta_1}{2}\right)}\right)^2>0,
		\end{equation}
		and \eqref{posi discri} is true. Factorizing, we can write for any $n$ sufficiently large
		$$c_{n}^{\pm}(\widetilde{\gamma},\theta_1,\theta_2)=\widetilde{\gamma}+\frac{\omega_{S}}{4\sin^{2}\left(\frac{\theta_2}{2}\right)}-\frac{\omega_{N}}{4\cos^2\left(\frac{\theta_1}{2}\right)}+\frac{\omega_{N}-\omega_{S}}{4n}\pm\frac{1}{4}\left|\frac{\omega_{S}}{\sin^{2}\left(\frac{\theta_2}{2}\right)}+\frac{\omega_{N}}{\cos^{2}\left(\frac{\theta_1}{2}\right)}-\frac{\omega_{N}+\omega_S-2\omega_C}{n}\right|\pm\mathtt{r}_{n}(\theta_1,\theta_2),$$
		with
		$$\mathtt{r}_{n}(\theta_1,\theta_2)\triangleq\frac{1}{4}\left|\tfrac{\omega_{S}}{\sin^{2}\left(\frac{\theta_2}{2}\right)}+\tfrac{\omega_{N}}{\cos^{2}\left(\frac{\theta_1}{2}\right)}-\tfrac{\omega_{N}+\omega_S-2\omega_C}{n}\right|\left[\sqrt{1+\frac{(\omega_{N}-\omega_C)(\omega_C-\omega_S)\tan^{2n}\left(\tfrac{\theta_1}{2}\right)\cot^{2n}\left(\tfrac{\theta_2}{2}\right)}{\left(\left[\frac{\omega_{S}}{\sin^{2}\left(\frac{\theta_2}{2}\right)}+\frac{\omega_{N}}{\cos^{2}\left(\frac{\theta_1}{2}\right)}\right]n-(\omega_{N}+\omega_S-2\omega_C)\right)^2}}-1\right].$$
		Notice that \eqref{decay geo} implies
		\begin{equation}\label{decay ttrn}
			\forall k\in\mathbb{N},\quad \mathtt{r}_{n}(\theta_1,\theta_2)\underset{n\to\infty}{=}O_{\theta_1,\theta_2}\left(\frac{1}{n^k}\right).
		\end{equation}
		We have the following dichotomy.
		\begin{enumerate}[label=\textbullet]
			\item If $\omega_{S}\cos^2\left(\tfrac{\theta_1}{2}\right)+\omega_{N}\sin^{2}\left(\tfrac{\theta_2}{2}\right)>0,$ then for $n$ large enough we have
			$$\left|\frac{\omega_{S}}{\sin^{2}\left(\frac{\theta_2}{2}\right)}+\frac{\omega_{N}}{\cos^{2}\left(\frac{\theta_1}{2}\right)}-\frac{\omega_{N}+\omega_S-2\omega_C}{n}\right|=\frac{\omega_{S}}{\sin^{2}\left(\frac{\theta_2}{2}\right)}+\frac{\omega_{N}}{\cos^{2}\left(\frac{\theta_1}{2}\right)}-\frac{\omega_{N}+\omega_S-2\omega_C}{n},$$
			and therefore
			$$\begin{cases}
				c_{n}^{+}(\theta_1,\theta_2)=\widetilde{\gamma}+\frac{\omega_{S}}{2\sin^2\left(\frac{\theta_2}{2}\right)}+\frac{\omega_{C}-\omega_S}{2n}+\mathtt{r}_{n}(\theta_1,\theta_2),\\
				c_{n}^{-}(\theta_1,\theta_2)=\widetilde{\gamma}-\frac{\omega_{N}}{2\cos^2\left(\frac{\theta_1}{2}\right)}+\frac{\omega_{N}-\omega_C}{2n}-\mathtt{r}_{n}(\theta_1,\theta_2).
			\end{cases}$$
			As a consequence,
			$$\begin{cases}
				c_{n+1}^{+}(\widetilde{\gamma},\theta_1,\theta_2)-c_{n}^{+}(\widetilde{\gamma},\theta_1,\theta_2)\underset{n\to\infty}{=}\frac{\omega_S-\omega_C}{2n(n+1)}+O_{\theta_1,\theta_2}\left(\frac{1}{n^3}\right),\\
				c_{n+1}^{-}(\widetilde{\gamma},\theta_1,\theta_2)-c_{n}^{-}(\widetilde{\gamma},\theta_1,\theta_2)\underset{n\to\infty}{=}\frac{\omega_C-\omega_N}{2n(n+1)}+O_{\theta_1,\theta_2}\left(\frac{1}{n^3}\right).
			\end{cases}$$
			Since $\omega_N\neq\omega_C$ and $\omega_C\neq\omega_S$, then we conclude the asymptotic (strict) monotonicity of $n\mapsto c_{n}^{\pm}(\widetilde{\gamma},\theta_1,\theta_2).$
			In addition,
			$$\lim_{n\to\infty}c_{n}^{+}(\widetilde{\gamma},\theta_1,\theta_2)=\widetilde{\gamma}+\frac{\omega_{S}}{2\sin^2\left(\frac{\theta_2}{2}\right)},\qquad\textnormal{and}\qquad\lim_{n\to\infty}c_{n}^{-}(\widetilde{\gamma},\theta_1,\theta_2)=\widetilde{\gamma}-\frac{\omega_{N}}{2\cos^2\left(\frac{\theta_1}{2}\right)}\cdot$$
			\item If $\omega_{S}\cos^2\left(\tfrac{\theta_1}{2}\right)+\omega_{N}\sin^{2}\left(\tfrac{\theta_2}{2}\right)<0,$ then for $n$ large enough we have
			$$\left|\frac{\omega_{S}}{\sin^{2}\left(\frac{\theta_2}{2}\right)}+\frac{\omega_{N}}{\cos^{2}\left(\frac{\theta_1}{2}\right)}-\frac{\omega_{N}+\omega_S-2\omega_C}{n}\right|=\frac{\omega_{N}+\omega_S-2\omega_C}{n}-\frac{\omega_{S}}{\sin^{2}\left(\frac{\theta_2}{2}\right)}-\frac{\omega_{N}}{\cos^{2}\left(\frac{\theta_1}{2}\right)}$$
			and therefore
			$$\begin{cases}
				c_{n}^{+}(\theta_1,\theta_2)=\widetilde{\gamma}-\frac{\omega_{N}}{2\cos^2\left(\frac{\theta_1}{2}\right)}+\frac{\omega_{N}-\omega_C}{2n}-\mathtt{r}_{n}(\theta_1,\theta_2),\\
				c_{n}^{-}(\theta_1,\theta_2)=\widetilde{\gamma}+\frac{\omega_{S}}{2\sin^2\left(\frac{\theta_2}{2}\right)}+\frac{\omega_{C}-\omega_S}{2n}+\mathtt{r}_{n}(\theta_1,\theta_2).
			\end{cases}$$
			As a consequence,
			$$\begin{cases}
				c_{n+1}^{+}(\widetilde{\gamma},\theta_1,\theta_2)-c_{n}^{+}(\widetilde{\gamma},\theta_1,\theta_2)\underset{n\to\infty}{=}\frac{\omega_C-\omega_N}{2n(n+1)}+O_{\theta_1,\theta_2}\left(\frac{1}{n^3}\right),\\
				c_{n+1}^{-}(\widetilde{\gamma},\theta_1,\theta_2)-c_{n}^{-}(\widetilde{\gamma},\theta_1,\theta_2)\underset{n\to\infty}{=}\frac{\omega_S-\omega_C}{2n(n+1)}+O_{\theta_1,\theta_2}\left(\frac{1}{n^3}\right).
			\end{cases}$$
			The monotonicity conclusion is still valid. In addition, in this case, the limits are exchanged
			$$\lim_{n\to\infty}c_{n}^{+}(\widetilde{\gamma},\theta_1,\theta_2)=\widetilde{\gamma}-\frac{\omega_{N}}{2\cos^2\left(\frac{\theta_1}{2}\right)}\qquad\textnormal{and}\qquad\lim_{n\to\infty}c_{n}^{-}(\widetilde{\gamma},\theta_1,\theta_2)=\widetilde{\gamma}+\frac{\omega_{S}}{2\sin^2\left(\frac{\theta_2}{2}\right)}\cdot$$
		\end{enumerate}
		The condition \eqref{additional constraint} implies that the above limits are well-separated. Together with the strict monotonicity property, we conclude \eqref{no spec coll}.

		\medskip \noindent
		\textbf{2.} Conversely, we assume that \eqref{additional constraint2} holds.
		This condition can also be written
		\begin{equation}\label{omgS func omgN}
			\omega_S=-\omega_N\frac{1-\cos(\theta_2)}{1+\cos(\theta_1)}\cdot
		\end{equation}
		Notice that the Gauss constraint \eqref{constraint omega NCS} can be written as follows
		\begin{align}
			\omega_C&=\frac{\omega_N\big(1-\cos(\theta_1)\big)+\omega_S\big(1+\cos(\theta_2)\big)}{\cos(\theta_2)-\cos(\theta_1)}\cdot\label{exp omgc 2}
		\end{align}
		Plugging \eqref{omgS func omgN} into \eqref{exp omgc 2} gives
		\begin{align}
			\omega_C&=\omega_N\frac{\big(1-\cos(\theta_1)\big)\big(1+\cos(\theta_1)\big)-\big(1-\cos(\theta_2)\big)\big(1+\cos(\theta_2)\big)}{\big(\cos(\theta_2)-\cos(\theta_1)\big)\big(1+\cos(\theta_1)\big)}\nonumber\\
			&=\omega_N\frac{\cos^2(\theta_2)-\cos^2(\theta_1)}{\big(\cos(\theta_2)-\cos(\theta_1)\big)\big(1+\cos(\theta_1)\big)}\nonumber\\
			&=\omega_N\frac{\cos(\theta_1)+\cos(\theta_2)}{1+\cos(\theta_1)}\cdot\label{omgC func omgN}
		\end{align}
		From \eqref{omgS func omgN} and \eqref{omgC func omgN}, we get
		\begin{align*}
			\omega_N+\omega_S=\frac{\omega_N}{1+\cos(\theta_1)}\Big[1+\cos(\theta_1)-\big(1-\cos(\theta_2)\big)\Big]=\omega_N\frac{\cos(\theta_1)+\cos(\theta_2)}{1+\cos(\theta_1)}=\omega_C.
		\end{align*}
		This last expression implies that $\omega_N\neq0$ (resp. $\omega_S\neq0$) otherwise $\omega_S=\omega_C$ (resp. $\omega_N=\omega_C$) which is excluded by construction. We also deduce
		$$\omega_N-\omega_C=-\omega_S,\qquad\omega_C-\omega_S=\omega_N.$$
		Now, using \eqref{omgS func omgN}, we infer
		\begin{align}\label{sign omgNS}
			\omega_N\omega_S&=-\omega_N^2\frac{1-\cos(\theta_2)}{1+\cos(\theta_1)}<0.
		\end{align}
		In particular $\omega_N\neq\omega_S$ and have opposite sign. Now, assume for the sake of contradiction that $\omega_C=0$, i.e. $\omega_N=-\omega_S.$ Combined with \eqref{additional constraint2} and the fact that $\tfrac{\theta_1}{2},\tfrac{\theta_2}{2}\in(0,\tfrac{\pi}{2}),$ we deduce
		$$\cos^2\left(\tfrac{\theta_1}{2}\right)=\sin^{2}\left(\tfrac{\theta_2}{2}\right),\qquad\textnormal{i.e.}\qquad\theta_1=\theta_2.$$
		This enters in contradiction with \eqref{choice th1-th2}. Thus,
		\begin{equation}\label{link omg NSC}
			\omega_C\neq0\qquad\textnormal{and}\qquad\omega_N+\omega_S-2\omega_C=-\omega_C\neq 0.
		\end{equation}
		In this case, the discriminant becomes
		$$\forall n\in\mathbb{N}^*,\quad\Delta_{n}(\theta_1,\theta_2)=\frac{1}{4n^2}\left[\omega_C^2-\omega_N\omega_S\tan^{2n}\left(\tfrac{\theta_1}{2}\right)\cot^{2n}\left(\tfrac{\theta_2}{2}\right)\right]>0.$$
		This implies in particular \eqref{posi discri}. Factorizing, we can write
		\begin{align*}
			c_{n}^{\pm}(\widetilde{\gamma},\theta_1,\theta_2)&=\widetilde{\gamma}-\frac{\omega_{N}}{2\cos^2\left(\frac{\theta_1}{2}\right)}+\frac{\omega_{N}-\omega_{S}}{4n}\pm\frac{|\omega_C|}{4n}\pm\mathtt{r}_{n}(\theta_1,\theta_2)\\
			&=\widetilde{\gamma}+\frac{\omega_{S}}{2\sin^2\left(\frac{\theta_2}{2}\right)}+\frac{\omega_{N}-\omega_{S}}{4n}\pm\frac{|\omega_C|}{4n}\pm\mathtt{r}_{n}(\theta_1,\theta_2),
		\end{align*}
		with
		$$\mathtt{r}_{n}(\theta_1,\theta_2)\triangleq\frac{|\omega_C|}{4n}\left[\sqrt{1-\frac{\omega_{N}\omega_S}{\omega_C^2}\tan^{2n}\left(\tfrac{\theta_1}{2}\right)\cot^{2n}\left(\tfrac{\theta_2}{2}\right)}-1\right].$$
		We have the following dichotomy.
		\begin{enumerate}[label=\textbullet]
			\item If $\omega_C>0,$ then $|\omega_C|=\omega_C=\omega_N+\omega_S$
			and therefore
			\begin{equation}\label{expand cn pm omcCpos}
				\begin{cases}
					c_{n}^{+}(\theta_1,\theta_2)=\widetilde{\gamma}-\frac{\omega_{N}}{2\cos^2\left(\frac{\theta_1}{2}\right)}+\frac{\omega_{N}}{2n}+\mathtt{r}_{n}(\theta_1,\theta_2)=\widetilde{\gamma}+\frac{\omega_{S}}{2\sin^2\left(\frac{\theta_2}{2}\right)}+\frac{\omega_{N}}{2n}+\mathtt{r}_{n}(\theta_1,\theta_2),\\
					c_{n}^{-}(\theta_1,\theta_2)=\widetilde{\gamma}-\frac{\omega_{N}}{2\cos^2\left(\frac{\theta_1}{2}\right)}-\frac{\omega_S}{2n}-\mathtt{r}_{n}(\theta_1,\theta_2)=\widetilde{\gamma}+\frac{\omega_{S}}{2\sin^2\left(\frac{\theta_2}{2}\right)}-\frac{\omega_S}{2n}-\mathtt{r}_{n}(\theta_1,\theta_2).
				\end{cases}
			\end{equation}
			As a consequence,
			$$\begin{cases}
				c_{n+1}^{+}(\widetilde{\gamma},\theta_1,\theta_2)-c_{n}^{+}(\widetilde{\gamma},\theta_1,\theta_2)\underset{n\to\infty}{=}-\frac{\omega_N}{2n(n+1)}+O_{\theta_1,\theta_2}\left(\frac{1}{n^3}\right),\\
				c_{n+1}^{-}(\widetilde{\gamma},\theta_1,\theta_2)-c_{n}^{-}(\widetilde{\gamma},\theta_1,\theta_2)\underset{n\to\infty}{=}\frac{\omega_S}{2n(n+1)}+O_{\theta_1,\theta_2}\left(\frac{1}{n^3}\right).
			\end{cases}$$
			This is sufficient to conclude the asymptotic strict monotonicity. In addition, since $\omega_N\omega_S<0,$ then both sequences have the same monotonicity asymptotically. Nevertheless, in this case, both part of the spectrum accumulate at the same point
			$$\lim_{n\to\infty}c_{n}^{+}(\widetilde{\gamma},\theta_1,\theta_2)=\widetilde{\gamma}-\frac{\omega_{N}}{2\cos^2\left(\frac{\theta_1}{2}\right)}=\widetilde{\gamma}+\frac{\omega_{S}}{2\sin^2\left(\frac{\theta_2}{2}\right)}=\lim_{n\to\infty}c_{n}^{-}(\widetilde{\gamma},\theta_1,\theta_2).$$
			Therefore, one must avoid the spectral collisions by a more careful analysis.\\
			$\blacktriangleright$ Let us first study the equation
			$$c_{\mathbf{m}}^{+}(\widetilde{\gamma},\theta_1,\theta_2)=c_{k\mathbf{m}}^{-}(\widetilde{\gamma},\theta_1,\theta_2),\qquad k\in\mathbb{N}^*.$$
			According to \eqref{expand cn pm omcCpos}, this equation is equivalent to
			\begin{equation}\label{eq11}
				-k\big(\omega_N+\widetilde{\mathtt{r}}_{\mathbf{m}}\big)=\omega_S+\widetilde{\mathtt{r}}_{k\mathbf{m}},\qquad\widetilde{\mathtt{r}}_{n}\triangleq2n\mathtt{r}_{n}(\theta_1,\theta_2)>0.
			\end{equation}
			Observe that $n\mapsto\widetilde{\mathtt{r}}_{n}$ is asymptotically decreasing and satisfies \eqref{decay ttrn}.\\
			\ding{226} If $\omega_N>0$, then the equation \eqref{eq11} can be written
			$$0=\omega_C+2(k-1)\omega_N+k\widetilde{\mathtt{r}}_{\mathbf{m}}+\widetilde{\mathtt{r}}_{k\mathbf{m}}.$$
			Each term in the right hand-side is non-negative and $\omega_{C}>0$, then this equation has no solution.\\
			\ding{226} Assume now that $\omega_N<0.$ By virtue of \eqref{sign omgNS}, we have $\omega_S>0.$ According to \eqref{decay ttrn} and \eqref{eq11}, we can select $\mathbf{m}$ large enough to ensure
			$$\widetilde{\mathtt{r}}_{\mathbf{m}}<|\omega_N|.$$
			Added to the asymptotic decay property of $n\mapsto\widetilde{\mathtt{r}}_{n}$ we get
			$$\forall k\in\mathbb{N}\setminus\{0,1\},\quad -k\big(\omega_N+\widetilde{\mathtt{r}}_{\mathbf{m}}\big)\geqslant 2\big(|\omega_{N}|-\widetilde{\mathtt{r}}_{\mathbf{m}}\big)\qquad\textnormal{and}\qquad\omega_S+\widetilde{\mathtt{r}}_{2\mathbf{m}}\geqslant\omega_S+\widetilde{\mathtt{r}}_{k\mathbf{m}}.$$
			Hence, it sufficies to impose
			$$2\big(|\omega_{N}|-\widetilde{\mathtt{r}}_{\mathbf{m}}\big)>\omega_S+\widetilde{\mathtt{r}}_{2\mathbf{m}},\qquad\textnormal{i.e.}\qquad2|\omega_N|>\omega_S+2\widetilde{\mathtt{r}}_{\mathbf{m}}+\widetilde{\mathtt{r}}_{2\mathbf{m}},$$
			so that the equations \eqref{eq11} admit no solution for any $k\in\mathbb{N}^*$ (recall that $c_{\mathbf{m}}^{+}\neq c_{\mathbf{m}}^{-}$). Using \eqref{decay ttrn}, we deduce that, up to taking $\mathbf{m}$ large enough, the following condition is sufficient
			\begin{equation}\label{cond omgSN11}
				2|\omega_N|>\omega_S.
			\end{equation}
			But, according to \eqref{additional constraint2}, the constraint \eqref{cond omgSN11} is equivalent to
			$$2\cos^2\left(\tfrac{\theta_1}{2}\right)>\sin^2\left(\tfrac{\theta_2}{2}\right).$$
			$\blacktriangleright$ Now, we turn to the study of the equation
			$$c_{k\mathbf{m}}^{+}(\widetilde{\gamma},\theta_1,\theta_2)=c_{\mathbf{m}}^{-}(\widetilde{\gamma},\theta_1,\theta_2),\qquad k\in\mathbb{N}^*.$$
			Using again \eqref{expand cn pm omcCpos}, this equation is equivalent to
			$$\omega_N+\widetilde{\mathtt{r}}_{k\mathbf{m}}=-k\big(\omega_S+\widetilde{\mathtt{r}}_{\mathbf{m}}\big).$$
			This is basically the same equation as \eqref{eq11} where $\omega_S$ and $\omega_N$ have been exchanged. So either $\omega_S>0$ and there is no solution, or $\omega_S<0$ and there is no solution provided that $\mathbf{m}$ is large enough and 
			$$2|\omega_S|>\omega_N,\qquad\textnormal{i.e.}\qquad2\sin^2\left(\tfrac{\theta_2}{2}\right)>\cos^2\left(\tfrac{\theta_1}{2}\right).$$
			\item If $\omega_C<0,$ then $|\omega_C|=-\omega_C=-\omega_N-\omega_S$ and therefore
			\begin{equation}\label{expand cn pm omcCneg}
				\begin{cases}
					c_{n}^{+}(\theta_1,\theta_2)=\widetilde{\gamma}-\frac{\omega_{N}}{2\cos^2\left(\frac{\theta_1}{2}\right)}-\frac{\omega_S}{2n}+\mathtt{r}_{n}(\theta_1,\theta_2)=\widetilde{\gamma}+\frac{\omega_{S}}{2\sin^2\left(\frac{\theta_2}{2}\right)}-\frac{\omega_S}{2n}+\mathtt{r}_{n}(\theta_1,\theta_2),\\
					c_{n}^{-}(\theta_1,\theta_2)=\widetilde{\gamma}-\frac{\omega_{N}}{2\cos^2\left(\frac{\theta_1}{2}\right)}+\frac{\omega_{N}}{2n}-\mathtt{r}_{n}(\theta_1,\theta_2)=\widetilde{\gamma}+\frac{\omega_{N}}{2\sin^2\left(\frac{\theta_2}{2}\right)}+\frac{\omega_N}{2n}-\mathtt{r}_{n}(\theta_1,\theta_2).
				\end{cases}
			\end{equation}
			As a consequence,
			$$\begin{cases}
				c_{n+1}^{+}(\widetilde{\gamma},\theta_1,\theta_2)-c_{n}^{+}(\widetilde{\gamma},\theta_1,\theta_2)\underset{n\to\infty}{=}\frac{\omega_S}{2n(n+1)}+O_{\theta_1,\theta_2}\left(\frac{1}{n^3}\right),\\
				c_{n+1}^{-}(\widetilde{\gamma},\theta_1,\theta_2)-c_{n}^{-}(\widetilde{\gamma},\theta_1,\theta_2)\underset{n\to\infty}{=}-\frac{\omega_N}{2n(n+1)}+O_{\theta_1,\theta_2}\left(\frac{1}{n^3}\right).
			\end{cases}$$
			As before, we can conclude the asymptotic strict monotonicity with the same limit.\\
			$\blacktriangleright$
			According to \eqref{expand cn pm omcCneg}, the equation
			$$c_{\mathbf{m}}^{+}(\widetilde{\gamma},\theta_1,\theta_2)=c_{k\mathbf{m}}^{-}(\widetilde{\gamma},\theta_1,\theta_2),\qquad k\in\mathbb{N}^*$$
			is equivalent to
			$$-k\big(-\omega_S+\widetilde{\mathtt{r}}_{\mathbf{m}}\big)=-\omega_N+\widetilde{\mathtt{r}}_{k\mathbf{m}}$$
			which is \eqref{eq11} with $(\omega_N,\omega_S)$ replaced by $(-\omega_S,-\omega_N).$ So there is no solution for either $\omega_S<0$ or $\omega_S>0$ and $2\sin^{2}\left(\tfrac{\theta_2}{2}\right)>\cos^2\left(\tfrac{\theta_1}{2}\right).$\\
			$\blacktriangleright$
			The equation
			$$c_{k\mathbf{m}}^{+}(\widetilde{\gamma},\theta_1,\theta_2)=c_{\mathbf{m}}^{-}(\widetilde{\gamma},\theta_1,\theta_2),\qquad k\in\mathbb{N}^*$$
			is equivalent to
			$$-\omega_S+\widetilde{\mathtt{r}}_{k\mathbf{m}}=-k\big(-\omega_N+\widetilde{\mathtt{r}}_{\mathbf{m}}\big)$$
			which is \eqref{eq11} with $(\omega_N,\omega_S)$ replaced by $(-\omega_N,-\omega_S).$ So there is no solution for either $\omega_N<0$ or $\omega_N>0$ and $2\cos^2\left(\tfrac{\theta_1}{2}\right)>\sin^{2}\left(\tfrac{\theta_2}{2}\right).$
		\end{enumerate}
	\end{proof}
	\subsection{Proof of the main result}
	In the following proposition, we gather all the remaining conditions required to apply the Crandall-Rabinowitz Theorem. 
	\begin{prop}\label{prop hyp CR2}
		Let $\alpha\in(0,1)$, $\kappa\in\{+,-\}$ and $\mathbf{m}\in\mathbb{N}^*$ with $\mathbf{m}\geqslant N(\theta_1,\theta_2).$ Assume that \eqref{additional constraint} holds or assume that \eqref{additional constraint2} with $\mathbf{(Hk\kappa)}$ for some $k\in\llbracket 1,4\rrbracket$ holds.
		\begin{enumerate}[label=(\roman*)]
			\item The linear operator $d_{(f_1,f_2)}\mathscr{G}\big(c_{\mathbf{m}}^{\kappa}(\widetilde{\gamma},\theta_1,\theta_2),0,0\big)$ is of Fredholm type with index zero.
			\item The kernel of $d_{(f_1,f_2)}\mathscr{G}\big(c_{\mathbf{m}}^{\kappa}(\widetilde{\gamma},\theta_1,\theta_2),0,0\big)$ is one dimensional. More precisely,
			\begin{equation}\label{ker2}
				\ker\Big(d_{(f_1,f_2)}\mathscr{G}\big(c_{\mathbf{m}}^{\kappa}(\widetilde{\gamma},\theta_1,\theta_2),0,0\big)\Big)=\mathtt{span}\big(u_0\big), 
			\end{equation}
			with
			$$u_0:\varphi\mapsto\begin{pmatrix}
				-c_{\mathbf{m}}^{\kappa}(\widetilde{\gamma},\theta_1,\theta_2)+\frac{\omega_C-\omega_S}{2\mathbf{m}}+\frac{\omega_S}{2\sin^{2}\left(\frac{\theta_2}{2}\right)}+\widetilde{\gamma}\vspace{0.2cm}\\
				\frac{\omega_C-\omega_N}{2\mathbf{m}}\frac{\sin(\theta_1)}{\sin(\theta_2)}\tan^{\mathbf{m}}\left(\frac{\theta_1}{2}\right)\cot^{\mathbf{m}}\left(\frac{\theta_2}{2}\right)
			\end{pmatrix}\cos(\mathbf{m}\varphi).$$
			\item The transversality condition is satisfied, namely
			\begin{equation}\label{trans2}
				\partial_{c}d_{(f_1,f_2)}\mathscr{G}\big(c_{\mathbf{m}}^{\kappa}(\widetilde{\gamma},\theta_1,\theta_2),0,0\big)[u_0]\not\in\textnormal{Im}\Big(d_{(f_1,f_2)}\mathscr{G}\big(c_{\mathbf{m}}^{\kappa}(\widetilde{\gamma},\theta_1,\theta_2),0,0\big)\Big).
			\end{equation}
		\end{enumerate}
	\end{prop}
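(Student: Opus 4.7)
The strategy is to verify the three hypotheses of the Crandall--Rabinowitz theorem in turn, each time using the Fourier diagonalization \eqref{split dfG2}--\eqref{def matrix} to reduce the question to elementary linear algebra for the $2\times 2$ matrices $M_n(c,\theta_1,\theta_2)$.

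For (i), since $c_{\mathbf{m}}^{\kappa}$ is by construction a root of $c\mapsto\det M_{\mathbf{m}}(c,\theta_1,\theta_2)$, the Fredholm property will follow from Proposition \ref{propreg2}-(iii) as soon as one checks that $c_{\mathbf{m}}^{\kappa}$ avoids the two exceptional values $\widetilde{\gamma}-\tfrac{\omega_N}{2\cos^2(\theta_1/2)}$ and $\widetilde{\gamma}+\tfrac{\omega_S}{2\sin^2(\theta_2/2)}$ forming the limit set $\mathbb{L}$ of \eqref{def limit set}. Under \eqref{additional constraint} the two values are distinct, and the quantitative expansion produced in the proof of the spectral lemma separates $c_{\mathbf{m}}^{\kappa}$ from $\lim_{n\to\infty}c_{n}^{\kappa}$ by a leading term of order $\tfrac{\omega_N-\omega_C}{2\mathbf{m}}$ or $\tfrac{\omega_C-\omega_S}{2\mathbf{m}}$ (with an exponentially small remainder), while it is separated from the other limit by an $O(1)$ amount; under \eqref{additional constraint2}, the refined expansions \eqref{expand cn pm omcCpos}--\eqref{expand cn pm omcCneg} place $c_{\mathbf{m}}^{\kappa}$ at distance of order $\tfrac{|\omega_N|}{2\mathbf{m}}$ or $\tfrac{|\omega_S|}{2\mathbf{m}}$ from the coincident limit, which is nonzero thanks to \eqref{sign omgNS}.

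For (ii), expanding any element of $X_{\mathbf{m}}^{1+\alpha}\times X_{\mathbf{m}}^{1+\alpha}$ in its Fourier basis and invoking \eqref{split dfG2}, the kernel equation reduces to $M_{\mathbf{m}n}(c_{\mathbf{m}}^{\kappa},\theta_1,\theta_2)(h_n^{(1)},h_n^{(2)})^{T}=0$ for every $n\geqslant 1$. The no-spectral-collision conclusion of the spectral lemma (valid under \eqref{additional constraint}, or under \eqref{additional constraint2} together with $\mathbf{(Hk\kappa)}$ for some $k\in\llbracket 1,4\rrbracket$), combined with the strict monotonicity of $(c_{n}^{\pm})_{n\geqslant N(\theta_1,\theta_2)}$, guarantees that $\det M_{\mathbf{m}n}(c_{\mathbf{m}}^{\kappa},\theta_1,\theta_2)\neq 0$ for every $n\geqslant 2$, so only the Fourier mode $n=1$ can contribute. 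The antidiagonal entries of $M_{\mathbf{m}}(c_{\mathbf{m}}^{\kappa})$ are both nonzero, since $\omega_N\neq\omega_C$, $\omega_C\neq\omega_S$ by \eqref{diff omgk}, and $\tan^{\mathbf{m}}(\theta_1/2)\cot^{\mathbf{m}}(\theta_2/2)>0$ by \eqref{choice th1-th2}; hence $M_{\mathbf{m}}(c_{\mathbf{m}}^{\kappa})$ has rank exactly one, and the standard $2\times 2$ kernel formula identifies its one-dimensional kernel as the line spanned by the explicit vector $u_0$ of \eqref{ker2}.

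For (iii), the key observation is a clean trace identity. By \eqref{dcF2}, $\partial_{c}d_{(f_1,f_2)}\mathscr{G}(c,0,0)$ acts componentwise as $\partial_{\varphi}$, so $\partial_{c}d_{(f_1,f_2)}\mathscr{G}(c_{\mathbf{m}}^{\kappa},0,0)[u_0]=-\mathbf{m}\,\vec{v}_0\sin(\mathbf{m}\varphi)$ with $\vec{v}_0\in\mathbb{R}^{2}$ the constant vector part of $u_0$. Since the range of $d_{(f_1,f_2)}\mathscr{G}(c_{\mathbf{m}}^{\kappa},0,0)$ restricted to the $\sin(\mathbf{m}\varphi)$ Fourier mode coincides with $\mathrm{Range}(M_{\mathbf{m}}(c_{\mathbf{m}}^{\kappa}))\subset\mathbb{R}^{2}$, the transversality condition \eqref{trans2} is equivalent to $\vec{v}_0\notin\mathrm{Range}(M_{\mathbf{m}}(c_{\mathbf{m}}^{\kappa}))$. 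For any $2\times 2$ rank-one matrix $M$ with $\ker M=\mathbb{R}\vec{v}_0$, an elementary computation shows $\vec{v}_0\in\mathrm{Range}(M)\Leftrightarrow\mathrm{tr}(M)=0$. Combined with the direct calculation
\begin{equation*}
\mathrm{tr}\,M_{\mathbf{m}}(c_{\mathbf{m}}^{\kappa},\theta_1,\theta_2)=\beta_{\mathbf{m}}(\widetilde{\gamma},\theta_1,\theta_2)-2c_{\mathbf{m}}^{\kappa}=-\kappa\sqrt{\Delta_{\mathbf{m}}(\theta_1,\theta_2)}\neq 0,
\end{equation*}
deduced from \eqref{btn}, \eqref{def cm2}, and the positivity \eqref{posi discri}, this closes the transversality step. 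The main obstacle I anticipate is the bookkeeping needed in (i) and (ii), where one must match $\kappa$ with the correct hypothesis $\mathbf{(Hk\kappa)}$ case by case in order to exclude every possible spectral resonance and every degenerate point coming from Proposition \ref{propreg2}-(iii); once those exclusions are in place, the trace identity makes step (iii) essentially automatic.
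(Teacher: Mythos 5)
Your proposal is correct. Parts (i) and (ii) follow essentially the same route as the paper: strict monotonicity of $n\mapsto c_{n}^{\kappa}$ keeps $c_{\mathbf{m}}^{\kappa}$ away from the limit set $\mathbb{L}$ so that Proposition \ref{propreg2}-(iii) applies, and the no-collision conditions kill every Fourier mode $n\geqslant 2$, leaving the rank-one matrix $M_{\mathbf{m}}\big(c_{\mathbf{m}}^{\kappa}\big)$ whose kernel is the stated line. Part (iii) is where you genuinely diverge, and your route is arguably cleaner. The paper introduces the scalar product $(\cdot|\cdot)_2$, identifies the range as the orthocomplement of a vector $g_0$ built from $\ker\big(M_{\mathbf{m}}^{\top}\big)$, computes the pairing $\big(\partial_{c}d_{(f_1,f_2)}\mathscr{G}[u_0]\,|\,g_0\big)_2$ explicitly, and then factors it via $\det M_{\mathbf{m}}=0$ into a product of two scalars, each shown nonzero separately (one by a contradiction argument squaring the discriminant, the other only for $\mathbf{m}$ large enough). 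Your rank-one trace criterion replaces all of this: writing $M_{\mathbf{m}}=\vec{a}\,\vec{b}^{\top}$, the kernel generator is orthogonal to $\vec{b}$ and the range is $\mathbb{R}\vec{a}$, so membership of the kernel generator in the range is exactly $\vec{a}\cdot\vec{b}=\mathrm{tr}\,M_{\mathbf{m}}=0$, and $\mathrm{tr}\,M_{\mathbf{m}}\big(c_{\mathbf{m}}^{\kappa}\big)=\beta_{\mathbf{m}}-2c_{\mathbf{m}}^{\kappa}=-\kappa\sqrt{\Delta_{\mathbf{m}}(\theta_1,\theta_2)}\neq 0$ is immediate from \eqref{posi discri}. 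Beyond brevity, this buys two things: it needs no extra largeness of $\mathbf{m}$ for the second factor, and it is insensitive to the sign of the second component of $u_0$ — note that the pairing of the true kernel generator $\big(d_2,-o_{21}\big)$ of $M_{\mathbf{m}}$ (with $d_2$ the lower-right and $o_{21}$ the lower-left entry, as computed in the paper's part (ii)) against the generator $\big(d_2,-o_{12}\big)$ of $\ker\big(M_{\mathbf{m}}^{\top}\big)$ equals $d_2\,\mathrm{tr}\,M_{\mathbf{m}}$, consistent with your criterion, whereas the paper's displayed computation corresponds to the vector $\big(d_2,+o_{21}\big)$ printed in the statement of \eqref{ker2} and produces $d_2\big(d_2-d_1\big)$ instead; both quantities are nonzero under the hypotheses, so the conclusion is unaffected, but your argument matches the kernel vector that is actually correct. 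The only point worth making explicit in a final write-up is the reduction you state in passing: an element supported on the $\sin(\mathbf{m}\varphi)$ mode lies in the image only if its vector part lies in $\mathrm{Range}\big(M_{\mathbf{m}}\big)$, which follows by matching the $n=1$ Fourier coefficient in \eqref{split dfG2}.
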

	\begin{proof}
		\textbf{(i)} The strict monotonicity of $n\mapsto c_{n}^{\kappa}(\widetilde{\gamma},\theta_1,\theta_2)$ gives $c_{\mathbf{m}}^{\kappa}(\widetilde{\gamma},\theta_1,\theta_2)\not\in\mathbb{L}$, where $\mathbb{L}$ is defined in \eqref{def limit set}. Together with Proposition \ref{propreg2}-(iii) this implies the desired Fredholmness property.
		
		\medskip \noindent
		\textbf{(ii)} The non-degeneracy conditions imply that
		$$\forall n\in\mathbb{N}\setminus\{0,1\},\quad\det\Big(M_{\mathbf{m}n}\big(c_{\mathbf{m}}^{\kappa}(\widetilde{\gamma},\theta_1,\theta_2),\theta_1,\theta_2\big)\Big)\neq 0.$$
		Together with the fact that the matrix $M_{\mathbf{m}}\big(c_{\mathbf{m}}^{\kappa}(\widetilde{\gamma},\theta_1,\theta_2),\theta_1,\theta_2\big)$ is singular and non-zero, we obtain from \eqref{split dfG2} the desired result because
		$$\ker\Big(M_{\mathbf{m}}\big(c_{\mathbf{m}}^{\kappa}(\widetilde{\gamma},\theta_1,\theta_2),\theta_1,\theta_2\big)\Big)=\mathtt{span}\begin{pmatrix}
			-c_{\mathbf{m}}^{\kappa}(\widetilde{\gamma},\theta_1,\theta_2)+\frac{\omega_C-\omega_S}{2\mathbf{m}}+\frac{\omega_S}{2\sin^{2}\left(\frac{\theta_2}{2}\right)}+\widetilde{\gamma}\vspace{0.2cm}\\
			\frac{\omega_C-\omega_N}{2\mathbf{m}}\frac{\sin(\theta_1)}{\sin(\theta_2)}\tan^{\mathbf{m}}\left(\frac{\theta_1}{2}\right)\cot^{\mathbf{m}}\left(\frac{\theta_2}{2}\right)
		\end{pmatrix}.$$

		\medskip \noindent
		\textbf{(iii)} The next step is to describe the range. For this aim, we introduce on $Y_{\mathbf{m}}^{\alpha}\times Y_{\mathbf{m}}^{\alpha}$ the scalar product
		$$\left(\left(\sum_{n=1}^{\infty}a_n\sin(\mathbf{m}n\varphi),\sum_{n=1}^{\infty}c_n\sin(\mathbf{m}n\varphi)\right)\Big|\left(\sum_{n=1}^{\infty}b_n\sin(\mathbf{m}n\varphi),\sum_{n=1}^{\infty}d_n\sin(\mathbf{m}n\varphi)\right)\right)_2\triangleq\sum_{n=1}^{\infty}a_nb_n+c_nd_n.$$
		Now we claim that
		\begin{equation}\label{range2}
			\textnormal{Im}\Big(d_{(f_1,f_2)}\mathscr{G}\big(c_{\mathbf{m}}^{\kappa}(\widetilde{\gamma},\theta_1,\theta_2),0,0\big)\Big)=\mathtt{span}^{\perp_{(\cdot|\cdot)_2}}\left(g_0:\varphi\mapsto\begin{pmatrix}
				-c_{\mathbf{m}}^{\kappa}(\widetilde{\gamma},\theta_1,\theta_2)+\frac{\omega_S-\omega_C}{2\mathbf{m}}+\frac{\omega_S}{2\sin^{2}\left(\frac{\theta_2}{2}\right)}+\widetilde{\gamma}\vspace{0.2cm}\\
				\frac{\omega_S-\omega_C}{2\mathbf{m}}\frac{\sin(\theta_2)}{\sin(\theta_1)}\tan^{\mathbf{m}}\left(\frac{\theta_1}{2}\right)\cot^{\mathbf{m}}\left(\frac{\theta_2}{2}\right)
			\end{pmatrix}\sin(\mathbf{m}\varphi)\right).
		\end{equation}
		Indeed, as in the proof of Proposition \ref{prop hyp CR1}-(iii), we shall prove the first inclusion and the second one is obtained by the Fredholmness property and the previous point.
		First observe that
		$$v_{\mathbf{m}}^{\kappa}(\theta_1,\theta_2)\triangleq\begin{pmatrix}
			-c_{\mathbf{m}}^{\kappa}(\widetilde{\gamma},\theta_1,\theta_2)+\frac{\omega_C-\omega_S}{2\mathbf{m}}+\frac{\omega_S}{2\sin^{2}\left(\frac{\theta_2}{2}\right)}+\widetilde{\gamma}\vspace{0.2cm}\\
			\frac{\omega_S-\omega_C}{2\mathbf{m}}\frac{\sin(\theta_2)}{\sin(\theta_1)}\tan^{\mathbf{m}}\left(\frac{\theta_1}{2}\right)\cot^{\mathbf{m}}\left(\frac{\theta_2}{2}\right)
		\end{pmatrix}\in\ker\Big(M_{\mathbf{m}}^{\top}\big(c_{\mathbf{m}}^{\kappa}(\widetilde{\gamma},\theta_1,\theta_2),\theta_1,\theta_2\big)\Big).$$
		Now, consider
		$$g:\varphi\mapsto\sum_{n=1}^{\infty}\mathbf{m}nM_{\mathbf{m}n}\big(c_{\mathbf{m}}^{\kappa}(\widetilde{\gamma},\theta_1,\theta_2),\theta_1,\theta_2\big)\begin{pmatrix}
			h_{n}^{(1)}\\
			h_{n}^{(2)}
		\end{pmatrix}\sin(\mathbf{m}n\varphi)\in\textnormal{Im}\Big(d_{(f_1,f_2)}\mathscr{G}\big(c_{\mathbf{m}}^{\kappa}(\widetilde{\gamma},\theta_1,\theta_2),0,0\big)\Big).$$
		Then, denoting $\cdot$ the usual scalar product on $\mathbb{R}^2$, we have
		\begin{align*}
			\big(g\,|\,g_0\big)_2&=\left(\mathbf{m}M_{\mathbf{m}}\big(c_{\mathbf{m}}^{\kappa}(\widetilde{\gamma},\theta_1,\theta_2),\theta_1,\theta_2\big)\begin{pmatrix}
				h_{1}^{(1)}\\
				h_{1}^{(2)}
			\end{pmatrix}\right)\cdot v_{\mathbf{m}}^{\kappa}(\theta_1,\theta_2)\\
			&=\mathbf{m}\begin{pmatrix}
				h_{1}^{(1)}\\
				h_{1}^{(2)}
			\end{pmatrix}\cdot \Big(M_{\mathbf{m}}^{\top}\big(c_{\mathbf{m}}^{\kappa}(\widetilde{\gamma},\theta_1,\theta_2),\theta_1,\theta_2\big)v_{\mathbf{m}}^{\kappa}(\theta_1,\theta_2)\Big)\\
			&=0.
		\end{align*}
		This proves the claim. Now, we turn to the transversality condition. We shall prove that the following quantity does not vanish
		\begin{align*}
			&\Big(\partial_{c}d_{(f_1,f_2)}\mathscr{G}\big(c_{\mathbf{m}}^{\kappa}(\widetilde{\gamma},\theta_1,\theta_2),0,0\big)[u_0]\,|\,g_0\Big)_2\\
			&=\mathbf{m}\left[\left(-c_{\mathbf{m}}^{\kappa}(\widetilde{\gamma},\theta_1,\theta_2)+\frac{\omega_C-\omega_S}{2\mathbf{m}}+\frac{\omega_S}{2\sin^2\left(\frac{\theta_2}{2}\right)}+\widetilde{\gamma}\right)^2+\frac{1}{4\mathbf{m}^2}(\omega_N-\omega_C)(\omega_C-\omega_S)\tan^{2\mathbf{m}}\left(\tfrac{\theta_1}{2}\right)\cot^{2\mathbf{m}}\left(\tfrac{\theta_2}{2}\right)\right].
		\end{align*}
		Using the fact that $\det\Big(M_{\mathbf{m}}\big(c_{\mathbf{m}}^{\kappa}(\widetilde{\gamma},\theta_1,\theta_2),\theta_1,\theta_2\big)\Big)=0,$ we obtain
		\begin{align*}
			&\left(-c_{\mathbf{m}}^{\kappa}(\widetilde{\gamma},\theta_1,\theta_2)+\tfrac{\omega_C-\omega_S}{2\mathbf{m}}+\tfrac{\omega_S}{2\sin^2\left(\frac{\theta_2}{2}\right)}+\widetilde{\gamma}\right)^2+\tfrac{1}{4\mathbf{m}^2}(\omega_N-\omega_C)(\omega_C-\omega_S)\tan^{2\mathbf{m}}\left(\tfrac{\theta_1}{2}\right)\cot^{2\mathbf{m}}\left(\tfrac{\theta_2}{2}\right)\\
			&=\left(-c_{\mathbf{m}}^{\kappa}(\widetilde{\gamma},\theta_1,\theta_2)+\tfrac{\omega_C-\omega_S}{2\mathbf{m}}+\tfrac{\omega_S}{2\sin^2\left(\frac{\theta_2}{2}\right)}+\widetilde{\gamma}\right)\left(-2c_{\mathbf{m}}^{\kappa}(\widetilde{\gamma},\theta_1,\theta_2)+\tfrac{\omega_S}{2\sin^2\left(\frac{\theta_2}{2}\right)}-\tfrac{\omega_N}{2\cos^2\left(\frac{\theta_1}{2}\right)}+\tfrac{\omega_N-\omega_S}{2\mathbf{m}}+2\widetilde{\gamma}\right).
		\end{align*}
		According to the definition of $c_{\mathbf{m}}^{\kappa}$, the equation
		$$-2c_{\mathbf{m}}^{\kappa}(\widetilde{\gamma},\theta_1,\theta_2)+\frac{\omega_S}{2\sin^2\left(\frac{\theta_2}{2}\right)}-\frac{\omega_N}{2\cos^2\left(\frac{\theta_1}{2}\right)}+\frac{\omega_N-\omega_S}{2\mathbf{m}}+2\widetilde{\gamma}=0$$
		is equivalent to $\Delta_{\mathbf{m}}(\theta_1,\theta_2)=0$, which corresponds to a double eigenvalue. This is excluded by hypothesis.
		Besides, we can write
		$$-c_{\mathbf{m}}^{\kappa}(\widetilde{\gamma},\theta_1,\theta_2)+\frac{\omega_C-\omega_S}{2\mathbf{m}}+\frac{\omega_S}{2\sin^2\left(\frac{\theta_2}{2}\right)}+\widetilde{\gamma}=\frac{\omega_{N}}{4\cos^2\left(\frac{\theta_1}{2}\right)}+\frac{\omega_{S}}{4\sin^{2}\left(\frac{\theta_2}{2}\right)}-\frac{\omega_{N}+\omega_{S}-2\omega_C}{4\mathbf{m}}+\kappa\frac{1}{2}\sqrt{\Delta_{\mathbf{m}}(\theta_1,\theta_2)}.$$
		Assume for the sake of contradiction that 
		$$-c_{\mathbf{m}}^{\kappa}(\widetilde{\gamma},\theta_1,\theta_2)+\frac{\omega_C-\omega_S}{2\mathbf{m}}+\frac{\omega_S}{2\sin^2\left(\frac{\theta_2}{2}\right)}+\widetilde{\gamma}=0.$$
		This equation is equivalent to
		$$\frac{\omega_{N}}{2\cos^2\left(\frac{\theta_1}{2}\right)}+\frac{\omega_{S}}{2\sin^{2}\left(\frac{\theta_2}{2}\right)}-\frac{\omega_{N}+\omega_{S}-2\omega_C}{2\mathbf{m}}=-\kappa\sqrt{\Delta_{\mathbf{m}}(\theta_1,\theta_2)}.$$
		Taking the square, we end up with
		$$\frac{1}{\mathbf{m}^2}(\omega_{N}-\omega_C)(\omega_C-\omega_S)\tan^{2\mathbf{m}}\left(\tfrac{\theta_1}{2}\right)\cot^{2\mathbf{m}}\left(\tfrac{\theta_2}{2}\right)=0.$$
		But, by construction $\omega_N\neq\omega_C$ and $\omega_C\neq\omega_S.$ In addition $\tan\left(\tfrac{\theta_1}{2}\right)\cot\left(\tfrac{\theta_2}{2}\right)\in(0,1)$. Contradiction. Consequently,
		$$\Big(\partial_{c}d_{(f_1,f_2)}\mathscr{G}\big(c_{\mathbf{m}}^{\kappa}(\widetilde{\gamma},\theta_1,\theta_2),0,0\big)[u_0]\,|\,g_0\Big)_2\neq0.$$
		This ends the proof of Proposition \ref{prop hyp CR2}.
	\end{proof}
	Finally, from the previous analysis, we have proved the following theorem, which gives a refined version of Theorem \ref{thm bif vcap2}.	
	\begin{theo}\label{thm bif vcap3}
		Let $\widetilde{\gamma}\in\mathbb{R}$ and $0<\theta_1<\theta_2<\pi.$ Fix $\omega_N,\omega_C,\omega_S\in\mathbb{R}$ satisfying \eqref{Gauss thm2}.
		Then, there exists $N(\theta_1,\theta_2)\triangleq N(\theta_1,\theta_2,\omega_N,\omega_S,\omega_C)\in\mathbb{N}^*$  such that for any $\mathbf{m}\in\mathbb{N}^*$ with $\mathbf{m}\geqslant N(\theta_1,\theta_2)$, the following hold true.
		\begin{enumerate}
			\item If the condition \eqref{additional constraint} is satisfied then there exist two branches of $\mathbf{m}$-fold uniformly rotating vortex strips for \eqref{Euler eq on S2} bifurcating from the absolute vorticity distribution $\overline{\Omega}_{\textnormal{\tiny{FC2}}}$, given by \eqref{omegaFC2} at the velocities $c_{\mathbf{m}}^{\pm}(\widetilde{\gamma},\theta_1,\theta_2)$, defined in \eqref{def cn+- alter}.
			\item  Fix $\kappa\in\{+,-\}$ and assume that the condition \eqref{additional constraint2}, supplemented with $\mathbf{(Hk\kappa)}$ for some $k\in\llbracket 1,4\rrbracket$, holds.  Then, there exist one  branch of $\mathbf{m}$-fold uniformly rotating vortex strips for \eqref{Euler eq on S2} bifurcating from $\overline{\Omega}_{\textnormal{\tiny{FC2}}}$ at the velocity $c_{\mathbf{m}}^{\kappa}(\widetilde{\gamma},\theta_1,\theta_2)$.
		\end{enumerate}
		
	\end{theo}

	\appendix
	\section{Appendix}
	\subsection{An integral}\label{app-integral}
	In this appendix, we give an explicit value for an integral with parameters oftenly used in this work.
	\begin{lem}\label{lem int}
		Let $n\in\mathbb{N}^*$ and $a,b\in(0,\pi)$. We define
		$$I_{n}(a,b)\triangleq\frac{1}{2\pi}\int_{0}^{2\pi}\cos(nx)\log\big(1-\cos(a)\cos(b)-\sin(a)\sin(b)\cos(x)\big)dx.$$
		Then
		$$I_{n}(a,b)=I_{n}(b,a)=-\frac{1}{n}\tan^n\left(\frac{\min(a,b)}{2}\right)\cot^n\left(\frac{\max(a,b)}{2}\right).$$
	\end{lem}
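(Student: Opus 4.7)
The strategy is to recognize the integrand as a standard Poisson-type logarithm and then apply a well-known Fourier expansion. First, the formula is symmetric in $a$ and $b$, so I may assume without loss of generality that $0<a\leqslant b<\pi$, and it suffices to establish
$$I_n(a,b)=-\frac{1}{n}\tan^n\!\left(\tfrac{a}{2}\right)\cot^n\!\left(\tfrac{b}{2}\right).$$

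The main step is to write
$$1-\cos(a)\cos(b)-\sin(a)\sin(b)\cos(x)=C\bigl(1-2r\cos(x)+r^2\bigr),$$
for suitable constants $C=C(a,b)>0$ and $r=r(a,b)\in(0,1)$. Matching the coefficient of $\cos(x)$ forces $Cr=\tfrac{1}{2}\sin(a)\sin(b)$, while matching the constant term forces $C(1+r^2)=1-\cos(a)\cos(b)$. Eliminating $C$ yields the quadratic $\sin(a)\sin(b)\,r^2-2(1-\cos(a)\cos(b))\,r+\sin(a)\sin(b)=0$, whose product of roots equals $1$. I will check (using the half-angle identities $\sin^2\!(\tfrac{a}{2})=\tfrac{1-\cos(a)}{2}$ and $\cos^2\!(\tfrac{a}{2})=\tfrac{1+\cos(a)}{2}$) that the two roots are
$$r_1=\tan\!\left(\tfrac{a}{2}\right)\cot\!\left(\tfrac{b}{2}\right),\qquad r_2=\cot\!\left(\tfrac{a}{2}\right)\tan\!\left(\tfrac{b}{2}\right)=\tfrac{1}{r_1}.$$
Since $0<a\leqslant b<\pi$, the monotonicity of $\tan$ on $(0,\tfrac{\pi}{2})$ gives $r_1\in(0,1]$, and strict inequality $r_1<1$ when $a<b$; the boundary case $a=b$ can be absorbed by continuity. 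The corresponding $C$ is then $C=\tfrac{\sin(a)\sin(b)}{2r_1}=2\cos^2\!(\tfrac{a}{2})\sin^2\!(\tfrac{b}{2})$.

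Taking logarithms, we obtain
$$\log\!\bigl(1-\cos(a)\cos(b)-\sin(a)\sin(b)\cos(x)\bigr)=\log(C)+\log\!\bigl(1-2r_1\cos(x)+r_1^2\bigr).$$
The constant term $\log(C)$ contributes nothing to $I_n(a,b)$ for $n\geqslant 1$. For the remaining piece, I invoke the classical Fourier expansion
$$\log\!\bigl(1-2r\cos(x)+r^2\bigr)=-2\sum_{k=1}^{\infty}\frac{r^k}{k}\cos(kx),\qquad |r|<1,$$
valid uniformly in $x$ by Abel summation (or by expanding $\log|1-re^{ix}|^2$ and using $\log(1-z)=-\sum_{k\geqslant 1}z^k/k$). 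Multiplying by $\cos(nx)$ and integrating term by term using orthogonality gives
$$\frac{1}{2\pi}\int_{0}^{2\pi}\cos(nx)\log\!\bigl(1-2r_1\cos(x)+r_1^2\bigr)\,dx=-\frac{r_1^n}{n},$$
which is exactly the claimed value. The only mildly delicate point is the strict bound $r_1<1$, but this is immediate from the hypothesis $a,b\in(0,\pi)$ and monotonicity; no integration by parts, residue calculus or other heavier machinery is required.
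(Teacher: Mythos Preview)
Your proof is correct and proceeds along a slightly different, more self-contained route than the paper. Both arguments ultimately hinge on the same algebraic identification $r=\tan\!\left(\tfrac{a}{2}\right)\cot\!\left(\tfrac{b}{2}\right)$ (the paper's $\alpha_{a,b}$), but the paper first integrates by parts to reduce $I_n$ to $-\tfrac{\mu_{a,b}}{n\pi}\int_0^\pi\frac{\sin(nx)\sin(x)}{1-\mu_{a,b}\cos(x)}\,dx$ and then invokes a table integral from Gradshteyn--Ryzhik, whereas you bypass both steps by recognizing the factorization $1-\cos(a)\cos(b)-\sin(a)\sin(b)\cos(x)=C\,(1-2r\cos(x)+r^2)$ and applying the classical Fourier expansion $\log(1-2r\cos(x)+r^2)=-2\sum_{k\geqslant 1}\tfrac{r^k}{k}\cos(kx)$ directly. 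The paper also treats the diagonal case $a=b$ separately via a literature reference, while you absorb it by continuity; both are fine, since the logarithmic singularity at $x=0$ when $a=b$ is integrable and both sides of the identity are continuous on $\{0<a\leqslant b<\pi\}$. Your approach is a little more elementary and avoids external references; the paper's has the minor advantage of not needing to justify termwise integration of the Fourier series, though that is routine for $|r|<1$.
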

	\begin{proof}
		$\blacktriangleright$ Let us first begin with the case $a=b.$ Observe that
		\begin{align*}
			\log\big(1-\cos^2(a)-\sin^2(a)\cos(x)\big)&=\log\big(1-\cos(x)\big)+\log\big(\sin^2(a)\big)\\
			&=\log\Big(\sin^2\big(\tfrac{x}{2}\big)\Big)+\log(2)+\log\big(\sin^2(a)\big).
		\end{align*}
		Hence, using \cite[Lem. A.3]{CCG16}, we get
		$$I_{n}(a,a)=\frac{1}{2\pi}\int_{0}^{2\pi}\log\Big(\sin^2\big(\tfrac{x}{2}\big)\Big)\cos(nx)dx=-\frac{1}{n}\cdot$$
		$\blacktriangleright$ Now, we assume $a\neq b$ with, without loss of generality, $a<b.$ We can write
		\begin{align*}
			I_{n}(a,b)&=\frac{1}{\pi}\int_{0}^{\pi}\cos(nx)\log\big(1-\mu_{a,b}\cos(x)\big)dx,\qquad\mu_{a,b}\triangleq\frac{\sin(a)\sin(b)}{1-\cos(a)\cos(b)}>0.
		\end{align*}
		Notice that 
		\begin{align*}
			a\neq b\Leftrightarrow\cos(a-b)<1&\Leftrightarrow\cos(a)\cos(b)+\sin(a)\sin(b)<1\\
			&\Leftrightarrow\sin(a)\sin(b)<1-\cos(a)\cos(b)\\
			&\Leftrightarrow\mu_{a,b}<1.
		\end{align*}
		Performing an integration by parts yields
		$$I_{n}(a,b)=-\frac{\mu_{a,b}}{n\pi}\int_{0}^{\pi}\frac{\sin(nx)\sin(x)}{1-\mu_{a,b}\cos(x)}dx.$$
		Now we shall use the following result which can be found in \cite[p. 391]{GR15}
		$$\frac{1}{1+\alpha^2}\int_{0}^{\pi}\frac{\sin(nx)\sin(x)}{1-\frac{2\alpha}{1+\alpha^2}\cos(x)}dx=\int_{0}^{\pi}\frac{\sin(nx)\sin(x)}{1-2\alpha\cos(x)+\alpha^2}dx=\begin{cases}
			\tfrac{\pi}{2}\alpha^{n-1}, & \textnormal{if }\alpha^2<1,\\
			\tfrac{\pi}{2\alpha^{n+1}}, & \textnormal{if }\alpha^2>1.
		\end{cases}$$
		We apply it with
		$$\frac{2\alpha}{1+\alpha^2}=\mu_{a,b},\qquad\textnormal{i.e.}\qquad\mu_{a,b}\alpha^2-2\alpha+\mu_{a,b}=0.$$
		The discriminant of the previous second order polynomial equation is $\Delta=4(1-\mu_{a,b}^2)>0,$ so we can take
		$$\alpha=\alpha_{a,b}\triangleq\frac{1-\sqrt{1-\mu_{a,b}^2}}{\mu_{a,b}}\in(0,1).$$
		Observe that
		\begin{align*}
			\alpha_{a,b}<1&\Leftrightarrow 1-\mu_{a,b}<\sqrt{1-\mu_{a,b}^2}\\
			&\Leftrightarrow(1-\mu_{a,b})^2<1-\mu_{a,b}^2\\
			&\Leftrightarrow\mu_{a,b}<1.
		\end{align*}
		Hence,
		$$I_{n}(a,b)=-\frac{\mu_{a,b}(1+\alpha_{a,b}^2)\alpha_{a,b}^{n-1}}{2n}=-\frac{\alpha_{a,b}^{n}}{n}\cdot$$
		We can also write
		\begin{align*}
			\alpha_{a,b}&=\frac{1}{\mu_{a,b}}-\sqrt{\frac{1}{\mu_{a,b}^2}-1}\\
			&=\frac{1-\cos(a)\cos(b)-\sqrt{\big(1-\cos(a)\cos(b)\big)^2-\sin^2(a)\sin^2(b)}}{\sin(a)\sin(b)}\cdot
		\end{align*}
		But
		\begin{align*}
			\big(1-\cos(a)\cos(b)\big)^2-\sin^2(a)\sin^2(b)&=1+\cos^2(a)\cos^2(b)-2\cos(a)\cos(b)-\big(1-\cos^2(a)\big)\big(1-\cos^2(b)\big)\\
			&=\cos^2(a)+\cos^2(b)-2\cos(a)\cos(b)\\
			&=\big(\cos(a)-\cos(b)\big)^2.
		\end{align*}
		Since $a,b\in(0,\pi)$ with $a<b,$ then $\cos(a)>\cos(b).$ Consequently,
		\begin{align*}
			\alpha_{a,b}&=\frac{1-\cos(a)\cos(b)-\big(\cos(a)-\cos(b)\big)}{\sin(a)\sin(b)}\\
			&=\frac{\big(1-\cos(a)\big)\big(1+\cos(b)\big)}{\sin(a)\sin(b)}\\
			&=\frac{2\sin^2\left(\frac{a}{2}\right)2\cos^2\left(\frac{b}{2}\right)}{2\sin\left(\frac{a}{2}\right)\cos\left(\frac{a}{2}\right)2\sin\left(\frac{b}{2}\right)\cos\left(\frac{b}{2}\right)}\\
			&=\tan\left(\tfrac{a}{2}\right)\cot\left(\tfrac{b}{2}\right).
		\end{align*}
		Finally, we have
		$$\forall n\in\mathbb{N}^*,\quad\forall\,0<a<b<\pi,\quad I_{n}(a,b)=-\frac{\tan^n\left(\frac{a}{2}\right)\cot^n\left(\frac{b}{2}\right)}{n}\cdot$$
	\end{proof}
	\subsection{Potential theory}\label{app-potential-theory}
	This section is devoted to some results on  the continuity of specific  operators with singular kernels. The proof of the next result can be found in \cite[Lem. 2.6]{HXX22}.
	\begin{prop}\label{prop-potentialtheory}
		Let $\alpha\in(0,1).$ Consider a kernel $K:\mathbb{T}\times\mathbb{T}\rightarrow \mathbb{R}$ smooth out of the diagonal and satisfying, for some $C_0>0$,
		\begin{align}
			\forall\varphi\neq\varphi'\in\mathbb{T},\quad|K(\varphi,\varphi')|&\leqslant C_0\left|\sin\left(\tfrac{\varphi-\varphi'}{2}\right)\right|^{-(1-\alpha)},\label{prop-potentialtheory-h0}\\ 
			\forall\varphi\neq\varphi'\in\mathbb{T},\quad|\partial_{\varphi} K(\varphi,\varphi')| &\leqslant C_0\left|\sin\left(\tfrac{\varphi-\varphi'}{2}\right)\right|^{-(2-\alpha)}.\label{prop-potentialtheory-h2}
		\end{align}
		Then, the integral operator $\mathcal{K}$ defined by
		$$\forall\varphi\in\mathbb{T},\quad\mathcal{K}(f)(\varphi)\triangleq \int_0^{2\pi} K(\varphi,\varphi')f(\varphi')d\varphi'$$
		is bounded from $L^\infty(\mathbb{T})$ into $C^\alpha(\mathbb{T}).$ More precisely, we have the following estimate
		$$\forall f\in L^{\infty}(\mathbb{T}),\quad\|\mathcal{K}(f)\|_{C^\alpha(\mathbb{T})}\leqslant C C_0 \|f\|_{L^\infty(\mathbb{T})},$$
		with $C>0$ an absolute constant. 
	\end{prop}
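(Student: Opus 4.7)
The plan is to prove Proposition~\ref{prop-potentialtheory} by the classical Calderón--Zygmund splitting argument: one establishes the $L^\infty$ bound of $\mathcal K(f)$ by direct integration of the size estimate on $K$, and then controls the Hölder seminorm by splitting the domain of integration into a ``near'' and a ``far'' region around a pair of test points.

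First, since $1-\alpha<1$, the function $\varphi'\mapsto |\sin((\varphi-\varphi')/2)|^{-(1-\alpha)}$ is integrable on $\mathbb{T}$ uniformly in $\varphi$, so the hypothesis \eqref{prop-potentialtheory-h0} together with Fubini yields directly
$$\|\mathcal{K}(f)\|_{L^\infty(\mathbb{T})}\leqslant C_0\|f\|_{L^\infty(\mathbb{T})}\int_0^{2\pi}\left|\sin\left(\tfrac{\varphi'}{2}\right)\right|^{-(1-\alpha)}d\varphi'\leqslant C\,C_0\|f\|_{L^\infty(\mathbb{T})}.$$
For the Hölder seminorm, I fix $\varphi_1,\varphi_2\in\mathbb{T}$ and set $\delta\triangleq |\sin((\varphi_1-\varphi_2)/2)|$, which plays the role of the geodesic distance between the points on the circle. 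It suffices to produce the bound $|\mathcal{K}(f)(\varphi_1)-\mathcal{K}(f)(\varphi_2)|\leqslant C\,C_0 \delta^{\alpha}\|f\|_{L^\infty(\mathbb{T})}$.

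I would split $\mathbb{T}=A\sqcup A^c$ where $A\triangleq\{\varphi'\in\mathbb{T}\,:\,|\sin((\varphi_1-\varphi')/2)|\leqslant 2\delta\}$ is the near region centred at $\varphi_1$ (a similar set centred at $\varphi_2$ works by symmetry, since on $A^c$ both distances to $\varphi_1$ and $\varphi_2$ are comparable, thanks to a triangle-type inequality on $\mathbb{T}$). On $A$, I would use the size bound \eqref{prop-potentialtheory-h0} twice to get
$$\int_{A}|K(\varphi_1,\varphi')-K(\varphi_2,\varphi')|\,|f(\varphi')|\,d\varphi'\leqslant C_0\|f\|_{L^\infty(\mathbb{T})}\int_{A}\Bigl(\left|\sin\left(\tfrac{\varphi_1-\varphi'}{2}\right)\right|^{-(1-\alpha)}+\left|\sin\left(\tfrac{\varphi_2-\varphi'}{2}\right)\right|^{-(1-\alpha)}\Bigr)d\varphi',$$
and both integrals are $\lesssim \delta^\alpha$ by an elementary change of variables (using that $|A|\lesssim \delta$ and that $s\mapsto s^{-(1-\alpha)}$ has integral of order $\delta^{\alpha}$ on $[0,C\delta]$). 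On $A^c$, I would apply the mean value theorem along the shorter arc joining $\varphi_1$ to $\varphi_2$ together with the derivative bound \eqref{prop-potentialtheory-h2}, which yields
$$|K(\varphi_1,\varphi')-K(\varphi_2,\varphi')|\leqslant C\,C_0\,\delta\,\left|\sin\left(\tfrac{\varphi_1-\varphi'}{2}\right)\right|^{-(2-\alpha)},$$
using the equivalence of $|\sin((\varphi_1-\varphi')/2)|$ and $|\sin((\varphi_2-\varphi')/2)|$ on $A^c$. Integrating then gives
$$\int_{A^c}|K(\varphi_1,\varphi')-K(\varphi_2,\varphi')|\,|f(\varphi')|\,d\varphi'\leqslant C\,C_0\|f\|_{L^\infty(\mathbb{T})}\,\delta\int_{|\sin(s/2)|\geqslant 2\delta}\left|\sin\left(\tfrac{s}{2}\right)\right|^{-(2-\alpha)}ds\lesssim C_0\|f\|_{L^\infty(\mathbb{T})}\,\delta\cdot\delta^{-(1-\alpha)},$$
which is of order $\delta^{\alpha}$, as required. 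Adding the two contributions completes the proof.

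The main obstacle is the bookkeeping on the torus: one must justify carefully that the mean value theorem can be applied along the short arc from $\varphi_1$ to $\varphi_2$ (which is why the $L^\infty$ bound on $\partial_\varphi K$ is pointwise and not merely averaged), and that on $A^c$ the two singular weights $|\sin((\varphi_i-\varphi')/2)|^{-(2-\alpha)}$ are comparable; once the standard inequalities $|\sin(u)-\sin(v)|\leqslant |u-v|$ and the periodicity of the integrand are used consistently, the rest of the argument reduces to direct one-dimensional integration of power-type singularities, with no further analytic subtlety.
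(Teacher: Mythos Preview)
Your argument is correct and is exactly the standard near/far Calder\'on--Zygmund splitting one expects here. Note that the paper does not actually prove this proposition; it simply refers to \cite[Lem.~2.6]{HXX22}. However, your approach is precisely the one the paper uses to prove the closely related Proposition~\ref{prop-potentialtheory-2} (same splitting into $B_{\varphi_1}(3d)$ and its complement, same use of the size bound on the near part and of the mean value theorem combined with the derivative bound on the far part), so your proposal is fully in line with the paper's methods.
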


	In some cases, the above proposition cannot be applied directly because the kernel $K$ has a non differentiable term, and thus the condition \eqref{prop-potentialtheory-h2} does not make sense. In those cases, let us give the alternative result.
	\begin{prop}\label{prop-potentialtheory-2}
		Let $\alpha\in(0,1)$ and $g\in C^\alpha(\mathbb{T})$, consider a kernel $K:\mathbb{T}\times\mathbb{T}\rightarrow \mathbb{R}$ smooth out of the diagonal and satisfying
		\begin{align}
			\forall\varphi\neq\varphi'\in\mathbb{T},\quad|K(\varphi,\varphi')|&\leqslant C_0\left|\sin\left(\tfrac{\varphi-\varphi'}{2}\right)\right|^{-1},\label{prop-potentialtheory-h0-2}\\ 
			\forall\varphi\neq\varphi'\in\mathbb{T},\quad|\partial_{\varphi} K(\varphi,\varphi')| &\leqslant C_0\left|\sin\left(\tfrac{\varphi-\varphi'}{2}\right)\right|^{-2}.\label{prop-potentialtheory-h2-2}
		\end{align}
		Then, the integral operator $\widetilde{\mathcal{K}}_g$ defined by
		$$\forall\varphi\in\mathbb{T},\quad\widetilde{\mathcal{K}}_g(f)(\varphi)\triangleq \int_0^{2\pi} K(\varphi,\varphi')\big(g(\varphi)-g(\varphi')\big)f(\varphi')d\varphi'$$ is bounded from $L^\infty(\mathbb{T})$ into $C^\alpha(\mathbb{T}).$ More precisely, we have the following estimate
		$$\forall f\in L^{\infty}(\mathbb{T}),\quad\|\widetilde{\mathcal{K}}_g(f)\|_{C^\alpha(\mathbb{T})}\leqslant C C_0\|g\|_{C^\alpha(\mathbb{T})} \|f\|_{L^\infty(\mathbb{T})},$$
		with $C>0$ an absolute constant. 
	\end{prop}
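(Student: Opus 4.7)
The plan is to exploit the observation that the combined kernel
$$\widetilde K_g(\varphi,\varphi') \triangleq K(\varphi,\varphi')\bigl(g(\varphi)-g(\varphi')\bigr)$$
inherits \emph{improved} integrability from the H\"older regularity of $g$. Indeed, since $|g(\varphi)-g(\varphi')| \leq C\|g\|_{C^\alpha(\mathbb{T})}\left|\sin\left(\tfrac{\varphi-\varphi'}{2}\right)\right|^{\alpha}$, the hypothesis \eqref{prop-potentialtheory-h0-2} produces the pointwise bound
$$|\widetilde K_g(\varphi,\varphi')| \leq CC_0\|g\|_{C^\alpha(\mathbb{T})}\left|\sin\left(\tfrac{\varphi-\varphi'}{2}\right)\right|^{-(1-\alpha)},$$
which is integrable on $\mathbb{T}$ and immediately yields $\|\widetilde{\mathcal{K}}_g f\|_{L^\infty(\mathbb{T})} \leq CC_0\|g\|_{C^\alpha(\mathbb{T})}\|f\|_{L^\infty(\mathbb{T})}$. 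The remaining task is to control the H\"older seminorm by adapting the near/far splitting behind the proof of Proposition \ref{prop-potentialtheory}, with the added difficulty that, since $g$ is not differentiable, one must estimate the $\varphi$-increment of $\widetilde K_g$ by hand rather than via a pointwise bound on $\partial_\varphi\widetilde K_g$.

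First I would fix $\varphi_1,\varphi_2\in\mathbb{T}$ with $\delta \triangleq |\varphi_1-\varphi_2|$ small, and split $\mathbb{T} = A \cup B$, with $A \triangleq \{\varphi' : |\varphi'-\varphi_1|\leq 2\delta\}$ and $B \triangleq \mathbb{T}\setminus A$. On the near piece $A$ (which is contained in a ball of radius $3\delta$ around $\varphi_2$), I would apply the integrable pointwise bound above to each of the pieces $\widetilde{\mathcal{K}}_g f(\varphi_1)|_A$ and $\widetilde{\mathcal{K}}_g f(\varphi_2)|_A$ separately; the elementary estimate $\int_0^{C\delta} r^{-(1-\alpha)}\,dr \lesssim \delta^\alpha$ yields the required $CC_0\|g\|_{C^\alpha}\|f\|_{L^\infty}\,\delta^\alpha$ control. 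For the far piece $B$, I would use the algebraic decomposition
\begin{equation*}
\widetilde K_g(\varphi_1,\varphi') - \widetilde K_g(\varphi_2,\varphi') = \bigl[K(\varphi_1,\varphi')-K(\varphi_2,\varphi')\bigr]\bigl(g(\varphi_1)-g(\varphi')\bigr) + K(\varphi_2,\varphi')\bigl(g(\varphi_1)-g(\varphi_2)\bigr),
\end{equation*}
and handle the two summands differently.

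The first summand is controlled by the mean value theorem applied to $s\mapsto K(s,\varphi')$ combined with \eqref{prop-potentialtheory-h2-2}: its absolute value is bounded by $CC_0\|g\|_{C^\alpha}\,\delta\,\left|\sin\left(\tfrac{\varphi_1-\varphi'}{2}\right)\right|^{-(2-\alpha)}$ (using also $|g(\varphi_1)-g(\varphi')| \leq C\|g\|_{C^\alpha}|\sin|^\alpha$), and integrating over $B$, where $|\sin|\gtrsim \delta$, contributes the desired $CC_0\|g\|_{C^\alpha}\,\delta^\alpha$.

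The main obstacle is the second summand $K(\varphi_2,\varphi')\bigl(g(\varphi_1)-g(\varphi_2)\bigr)$. The prefactor $|g(\varphi_1)-g(\varphi_2)|\leq\|g\|_{C^\alpha}\delta^\alpha$ is of the right size, but the critical $|\sin|^{-1}$ bound \eqref{prop-potentialtheory-h0-2} is borderline non-integrable, giving only $\int_B |K(\varphi_2,\varphi')|\,d\varphi' \lesssim |\log\delta|$ and hence a logarithmic loss. I would close this either by absorbing $|\log\delta|\,\delta^\alpha$ into $C_{\alpha'}\delta^{\alpha'}$ for any $\alpha'\in(0,\alpha)$ (so one runs the bifurcation argument of Section \ref{sec-two-interf} with a slightly larger initial H\"older exponent than the target), or by exploiting additional cancellation structure carried by $K$ in the concrete applications (where $K$ originates from differentiating a logarithmic potential and enjoys a natural antisymmetry), which renders $\int_B K(\varphi_2,\varphi')\,f(\varphi')\,d\varphi'$ uniformly bounded in $\delta$. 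This balance between the H\"older weight of $g$ and the critical singularity of $K$ is the delicate point of the whole argument.
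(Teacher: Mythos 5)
Your argument coincides with the paper's proof in every step except the last one. The paper proves the $L^\infty$ bound exactly as you do (trading the H\"older increment of $g$ for an integrable $\left|\sin\left(\tfrac{\varphi-\varphi'}{2}\right)\right|^{-(1-\alpha)}$ kernel), and for the seminorm it performs the same near/far splitting over $B_{\varphi_1}(3d)$ and its complement, producing four terms $I_1,\dots,I_4$: your near-piece estimates are the paper's $I_1,I_2$, and your first far summand, treated by the mean value theorem together with \eqref{prop-potentialtheory-h2-2}, is the paper's $I_3$.

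The term you isolate as the obstruction is the paper's $I_4=\big(g(\varphi_1)-g(\varphi_2)\big)\int_{B^c_{\varphi_1}(3d)}K(\varphi_2,\varphi')f(\varphi')d\varphi'$, and your diagnosis is correct: under \eqref{prop-potentialtheory-h0-2} alone this only gives $|\varphi_1-\varphi_2|^\alpha\log\tfrac{1}{|\varphi_1-\varphi_2|}$. The paper claims to close this term without loss, reducing it to $\int_{B^c_{\varphi_1}(3d)}\left|\sin\left(\tfrac{\varphi_1-\varphi'}{2}\right)\right|^{-(1-\alpha)}d\varphi'\leqslant C|\varphi_1-\varphi_2|^\alpha$; but that integral tends to the positive constant $\int_{\mathbb{T}}\left|\sin\left(\tfrac{\varphi'}{2}\right)\right|^{-(1-\alpha)}d\varphi'$ as $\varphi_2\to\varphi_1$, so this last inequality is false and the logarithmic loss is genuine. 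Indeed the proposition fails as literally stated: with $K(\varphi,\varphi')=\big|2\sin\left(\tfrac{\varphi-\varphi'}{2}\right)\big|^{-1}$, $g(\varphi)=\big|2\sin\left(\tfrac{\varphi}{2}\right)\big|^{\alpha}$ and $f\equiv1$, a direct computation gives $\widetilde{\mathcal{K}}_g(f)(\varphi)-\widetilde{\mathcal{K}}_g(f)(0)\sim c\,|\varphi|^{\alpha}\log\tfrac{1}{|\varphi|}$ with $c>0$. So you have not left a gap; you have located one in the paper. Of your two repairs, the $\varepsilon$-loss version ($L^\infty\to C^{\alpha'}$ for every $\alpha'<\alpha$) is what the stated hypotheses actually yield and is enough for the bifurcation argument after measuring the perturbations in a slightly stronger H\"older norm; the cancellation route is more delicate than you suggest, since even for an odd leading part of $K$ the truncated integral against a general $L^\infty$ density still produces a logarithm, so one would have to exploit the regularity of the specific densities as well.
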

	\begin{proof}
		The $L^\infty$ norm of $\widetilde{\mathcal{K}}_g(f)$ can be estimated as
		\begin{align*}
			\left|\widetilde{\mathcal{K}}_g(f)(\varphi)\right|&\leqslant C\|f\|_{L^\infty(\mathbb{T})}\|g\|_{C^\alpha(\mathbb{T})}\int_0^{2\pi} |K(\varphi,\varphi')||\varphi-\varphi'|^\alpha d\varphi'\\
			&\leqslant  C C_0\|f\|_{L^\infty(\mathbb{T})}\|g\|_{C^\alpha(\mathbb{T})}\ \sup_{\varphi\in\mathbb{T}}\int_0^{2\pi} \left|\sin\left(\tfrac{\varphi-\varphi'}{2}\right)\right|^{-1}|\varphi-\varphi'|^{\alpha}d\varphi'.
		\end{align*}
		Since we work on the torus, we can always assume that 
		$$\forall\varphi\neq\varphi'\in\mathbb{T},\quad0<|\varphi-\varphi'|\leqslant\pi.$$ 
		As a consequence, we have the following classical convexity estimate
		\begin{equation}\label{convex sin}
			\forall\varphi\neq\varphi'\in\mathbb{T},\quad\tfrac{2}{\pi}|\varphi-\varphi'|\leqslant2\left|\sin\left(\tfrac{\varphi-\varphi'}{2}\right)\right|\leqslant|\varphi-\varphi'|.
		\end{equation}
		Therefore, by using \eqref{convex sin}, a change of variable and the fact that $\alpha\in (0,1)$, we get
		$$\forall\varphi\in\mathbb{T},\quad\int_0^{2\pi} \left|\sin\left(\tfrac{\varphi-\varphi'}{2}\right)\right|^{-1}|\varphi-\varphi'|^{\alpha}d\varphi'\leqslant C\int_0^{2\pi} \left|\sin\left(\tfrac{\varphi'}{2}\right)\right|^{-(1-\alpha)}d\varphi'<\infty.$$
		Hence, we obtain
		$$
		\|\widetilde{\mathcal{K}}_g(f)\|_{L^\infty(\mathbb{T})}\leqslant C C_0\|f\|_{L^\infty(\mathbb{T})}\|g\|_{C^\alpha(\mathbb{T})}.
		$$
		For the H\"older regularity, take $\varphi_1\neq\varphi_2 \in\mathbb{T}$. 
		Define 
		$$d\triangleq2\left|\sin\left(\tfrac{\varphi_1-\varphi_2}{2}\right)\right|=\big|e^{\mathrm{i}\varphi_1}-e^{\mathrm{i}\varphi_2}\big|$$
		and for $\varphi\in\mathbb{T}$ and $r>0,$
		$$B_{\varphi}(r)\triangleq\left\{\varphi'\in\mathbb{T}\quad\textnormal{s.t.}\quad2\left|\sin\left(\tfrac{\varphi-\varphi'}{2}\right)\right|<r\right\},\qquad B^c_{\varphi}(r)\triangleq\mathbb{T}\setminus B_{\varphi}(r).$$
		Hence
		\begin{align*}
			\widetilde{\mathcal{K}}_g(f)(\varphi_1)-\widetilde{\mathcal{K}}_g(f)(\varphi_2)
			=&\int_0^{2\pi}K(\varphi_1,\varphi')\big(g(\varphi_1)-g(\varphi')\big)f(\varphi')d\varphi'-\int_0^{2\pi}K(\varphi_2,\varphi')\big(g(\varphi_2)-g(\varphi')\big)f(\varphi')d\varphi'\\
			=&\int_{B_{\varphi_1}(3d)}K(\varphi_1,\varphi')\big(g(\varphi_1)-g(\varphi')\big)f(\varphi')d\varphi'\\
			&-\int_{B_{\varphi_1}(3d)}K(\varphi_2,\varphi')\big(g(\varphi_2)-g(\varphi')\big)f(\varphi')d\varphi'\\
			&+\int_{B^c_{\varphi_1}(3d)}(K(\varphi_1,\varphi')-K(\varphi_2,\varphi'))\big(g(\varphi_1)-g(\varphi')\big)f(\varphi')d\varphi'\\
			&+\int_{B^c_{\varphi_1}(3d)}K(\varphi_2,\varphi')\big(g(\varphi_1)-g(\varphi_2)\big)f(\varphi')d\varphi'\\
			\triangleq& I_1+I_2+I_3+I_4.
		\end{align*}
		Using \eqref{prop-potentialtheory-h0}, \eqref{convex sin} and a change of variables, we arrive at
		\begin{align*}
			|I_1|&\leqslant CC_0\|f\|_{L^\infty(\mathbb{T})}\|g\|_{C^\alpha(\mathbb{T})}\int_{B_{\varphi_1}(3d)}\left|\sin\left(\tfrac{\varphi_1-\varphi'}{2}\right)\right|^{-1}|\varphi_1-\varphi'|^{\alpha}d\varphi'\\
			&\leqslant CC_0\|f\|_{L^\infty(\mathbb{T})}\|g\|_{C^\alpha(\mathbb{T})}\int_{B_{\varphi_1}(3d)}\left|\sin\left(\tfrac{\varphi_1-\varphi'}{2}\right)\right|^{-(1-\alpha)}d\varphi'\\
			&\leqslant CC_0\|f\|_{L^\infty(\mathbb{T})}\|g\|_{C^\alpha(\mathbb{T})}\int_{0}^{\frac{3}{2}d}\frac{dw}{|w|^{1-\alpha}\sqrt{1-w^2}}dw\\
			&\leqslant  CC_0\|f\|_{L^\infty(\mathbb{T})}\|g\|_{C^\alpha(\mathbb{T})} d^\alpha\\
			&=CC_0\|f\|_{L^\infty(\mathbb{T})} \|g\|_{C^\alpha(\mathbb{T})}|\varphi_1-\varphi_2|^\alpha.
		\end{align*}
		In order to work with $I_2$, note that $B_{\varphi_1}(3d)\subset B_{\varphi_2}(4d)$. Thus, proceding as before, we infer
		\begin{align*}
			|I_2|&\leqslant CC_0\|f\|_{L^\infty(\mathbb{T})}\|g\|_{C^\alpha(\mathbb{T})} \int_{B_{\varphi_1}(3d)}\left|\sin\left(\tfrac{\varphi_2-\varphi'}{2}\right)\right|^{-1}|\varphi_2-\varphi'|^{\alpha}d\varphi'\\
			&\leqslant CC_0\|f\|_{L^\infty(\mathbb{T})}\|g\|_{C^\alpha(\mathbb{T})} \int_{B_{\varphi_2}(4d)}\left|\sin\left(\tfrac{\varphi_2-\varphi'}{2}\right)\right|^{-1}|\varphi_2-\varphi'|^{\alpha}d\varphi'\\
			&\leqslant CC_0\|f\|_{L^\infty(\mathbb{T})}\|g\|_{C^\alpha(\mathbb{T})} \int_{B_{\varphi_2}(4d)}\left|\sin\left(\tfrac{\varphi_2-\varphi'}{2}\right)\right|^{-(1-\alpha)}d\varphi'\\
			&\leqslant CC_0\|f\|_{L^\infty(\mathbb{T})}\|g\|_{C^\alpha(\mathbb{T})} |\varphi_1-\varphi_2|^\alpha.
		\end{align*}
		For the third term $I_3$ we use the mean value theorem and \eqref{prop-potentialtheory-h2} achieving
		\begin{align*}
			|I_3|&\leqslant C \left|(\varphi_1-\varphi_2)\int_0^1\int_{B^c_{\varphi_1}(3d)}(\partial_{x}K)\big(\varphi_1+(1-s)(\varphi_2-\varphi_1),\varphi'\big)\big(g(\varphi_1)-g(\varphi')\big)f(\varphi')d\varphi' ds\right|\\
			&\leqslant C C_0 \|f\|_{L^\infty(\mathbb{T})}\|g\|_{C^\alpha(\mathbb{T})}|\varphi_1-\varphi_2| \int_0^1\int_{B^c_{\varphi_1}(3d)}\left|\sin\left(\tfrac{\varphi_1+(1-s)(\varphi_2-\varphi_1)-\varphi'}{2}\right)\right|^{-2}|\varphi_1-\varphi'|^\alpha d\varphi'ds.
		\end{align*}
		Note that if $\varphi'\in B^c_{\varphi_1}(3d)$ and $s\in[0,1]$, then
		\begin{align}
			2\left|\sin\left(\tfrac{\varphi_1+(1-s)(\varphi_2-\varphi_1)-\varphi'}{2}\right)\right|&=\left|e^{\mathrm{i}(\varphi_1-\varphi')}-e^{\mathrm{i}(1-s)(\varphi_1-\varphi_2)}\right|\nonumber\\
			&\geqslant\left|e^{\mathrm{i}(\varphi_1-\varphi')}-1\right|-\left|e^{\mathrm{i}(1-s)(\varphi_1-\varphi_2)}-1\right|\nonumber\\
			&\geqslant\left|e^{\mathrm{i}(\varphi_1-\varphi')}-1\right|-\left|e^{\mathrm{i}(\varphi_1-\varphi_2)}-1\right|=\left|e^{\mathrm{i}\varphi_1}-e^{\mathrm{i}\varphi'}\right|-\left|e^{\mathrm{i}\varphi_1}-e^{\mathrm{i}\varphi_2}\right|\nonumber\\
			&\geqslant\tfrac{2}{3}\left|e^{\mathrm{i}\varphi_1}-e^{\mathrm{i}\varphi'}\right|=\tfrac{4}{3}\left|\sin\left(\tfrac{\varphi_1-\varphi'}{2}\right)\right|\label{e-sin}
		\end{align}
		which implies, through \eqref{convex sin} and a change of variables,
		\begin{align*}
			|I_3|&\leqslant C C_0 \|f\|_{L^\infty(\mathbb{T})}\|g\|_{C^\alpha(\mathbb{T})} |\varphi_1-\varphi_2|\int_{B^c_{\varphi_1}(3d)}\left|\sin\left(\tfrac{\varphi_1-\varphi'}{2}\right)\right|^{-2}|\varphi_1-\varphi'|^{\alpha}d\varphi'\\
			&\leqslant C C_0 \|f\|_{L^\infty(\mathbb{T})}\|g\|_{C^\alpha(\mathbb{T})} |\varphi_1-\varphi_2|\int_{B^c_{\varphi_1}(3d)}\left|\sin\left(\tfrac{\varphi_1-\varphi'}{2}\right)\right|^{-(2-\alpha)}d\varphi'\\
			&\leqslant C C_0 \|f\|_{L^\infty(\mathbb{T})}\|g\|_{C^\alpha(\mathbb{T})} |\varphi_1-\varphi_2|\int_{d}^{1}\frac{dw}{|w|^{2-\alpha}\sqrt{1-w^2}}\\
			&\leqslant C C_0 \|f\|_{L^\infty(\mathbb{T})}\|g\|_{C^\alpha(\mathbb{T})} |\varphi_1-\varphi_2|\frac{1}{|\varphi_1-\varphi_2|^{1-\alpha}}\\
			&\leqslant C C_0 \|f\|_{L^\infty(\mathbb{T})}\|g\|_{C^\alpha(\mathbb{T})} |\varphi_1-\varphi_2|^\alpha.
		\end{align*}
		Let us finish with $I_4$, using that $g\in C^\alpha(\mathbb{T})$,
		\begin{align*}
			|I_4|\leqslant C C_0 \|f\|_{L^\infty(\mathbb{T}) }\|g\|_{C^\alpha(\mathbb{T})}\int_{B^c_{\varphi_1}(3d)}\left|\sin\left(\tfrac{\varphi_2-\varphi'}{2}\right)\right|^{-1}|\varphi_1-\varphi_2|^\alpha d\varphi'.
		\end{align*}
		Applying \eqref{e-sin} with $s=0$, we get
		$$2\left|\sin\left(\tfrac{\varphi_2-\varphi'}{2}\right)\right|\geqslant\tfrac{4}{3}\left|\sin\left(\tfrac{\varphi_1-\varphi'}{2}\right)\right|.$$
		Besides, for $\varphi'\in B_{\varphi_1}^c(3d),$ we have
		$$\left|\sin\left(\tfrac{\varphi_1-\varphi_2}{2}\right)\right|\leqslant\tfrac{1}{3}\left|\sin\left(\tfrac{\varphi_1-\varphi'}{2}\right)\right|.$$
		Combining the foregoing facts, we end up with
		\begin{align*}
			|I_4|&\leqslant C C_0 \|f\|_{L^\infty(\mathbb{T}) }\|g\|_{C^\alpha(\mathbb{T})}\int_{B^c_{\varphi_1}(3d)}\left|\sin\left(\tfrac{\varphi_1-\varphi'}{2}\right)\right|^{-(1-\alpha)}d\varphi'\\
			&\leqslant C C_0 \|f\|_{L^\infty(\mathbb{T}) }\|g\|_{C^\alpha(\mathbb{T})} |\varphi_1-\varphi_2|^\alpha.
		\end{align*}
		Putting together the preceding estimates yields
		$$
		\left|\widetilde{\mathcal{K}}_g(f)(\varphi_1)-\widetilde{\mathcal{K}}_g(f)(\varphi_2)\right| \leqslant C C_0 \|f\|_{L^\infty(\mathbb{T})}\|g\|_{C^\alpha(\mathbb{T})} |\varphi_1-\varphi_2|^\alpha,
		$$
		concluding the proof.
	\end{proof}

	\subsection{Crandall-Rabinowitz theorem}\label{sec-CR}
	In this last appendix, we recall the classical Crandall-Rabinowitz Theorem whose proof can be found  in \cite{CR71}.
	
	\begin{theo}[Crandall-Rabinowitz Theorem]\label{CR}
		Let $\lambda_0\in\mathbb{R},$ $X,Y$ be two Banach spaces, $V$ be a neighborhood of $0$ in $X$ and $\mathscr{F}:\mathbb{R}\times V\rightarrow Y$ be a function with the properties,
		\begin{enumerate}
			\item $\mathscr{F}(\lambda,0)=0$ for all $\lambda\in\mathbb{R}$.
			\item The partial derivatives  $\partial_\lambda \mathscr{F}$, $d_f\mathscr{F}$ and  $\partial_{\lambda}d_f\mathscr{F}$ exist and are continuous.
			\item The operator $d_f \mathscr{F}(\lambda_0,0)$ is Fredholm of zero index and $\ker\big(d_f\mathscr{F}(\lambda_0,0)\big)=\mathtt{span}(f_0)$ is one-dimensional. 
			\item  Transversality assumption: $\partial_{\lambda}d_f\mathscr{F}(\lambda_0,0)[f_0] \notin \textnormal{Im}\big(d_f\mathscr{F}(\lambda_0,0)\big)$.
		\end{enumerate}
		If $Z$ is any complement of  $\ker\big(d_f\mathscr{F}(\lambda_0,0)\big)$ in $X$, then there is a neighborhood $U$ of $(\lambda_0,0)$ in $\mathbb{R}\times X$, an interval  $(-a,a)$ with $a>0$, and two continuous functions $\Phi:(-a,a)\rightarrow\mathbb{R}$, $\beta:(-a,a)\rightarrow Z$ such that $\Phi(0)=\lambda_0$ and $\beta(0)=0$ and
		$$\mathscr{F}^{-1}(0)\cap U=\big\{\big(\Phi(s), s f_0+s\beta(s)\big) : |s|<a\big\}\cup\big\{(\lambda,0): (\lambda,0)\in U\big\}.$$
	\end{theo}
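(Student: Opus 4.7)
The plan is to verify the three Crandall--Rabinowitz hypotheses by leveraging the spectral description of $d_{(f_1,f_2)}\mathscr{G}(c,0,0)$ obtained in Proposition \ref{propreg2}, combined with the no-collision analysis of the sequences $(c_{n}^{\pm}(\widetilde{\gamma},\theta_1,\theta_2))_{n\geqslant N(\theta_1,\theta_2)}$ carried out in the preceding lemma. The overall strategy mirrors the one-interface argument in Proposition \ref{prop hyp CR1}, but scalar operators are replaced by the $2\times 2$ matrix symbols $M_{n}(c,\theta_1,\theta_2)$, so particular care must be taken to keep track of the interaction between $M_{\mathbf{m}}$ and its transpose $M_{\mathbf{m}}^{\top}$.

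For (i) and (ii), I would first observe that the strict monotonicity of $n\mapsto c_{n}^{\kappa}(\widetilde{\gamma},\theta_1,\theta_2)$ forces $c_{\mathbf{m}}^{\kappa}(\widetilde{\gamma},\theta_1,\theta_2)\notin\mathbb{L}$, which is exactly the condition required in Proposition \ref{propreg2}-(iii) to conclude Fredholmness with index zero. For the kernel, the Fourier representation \eqref{split dfG2} reduces the analysis to a mode-by-mode inspection: a pair $(h^{(1)},h^{(2)})$ lies in the kernel iff its $n$-th Fourier coefficient belongs to $\ker\big(M_{\mathbf{m}n}(c_{\mathbf{m}}^{\kappa}(\widetilde{\gamma},\theta_1,\theta_2),\theta_1,\theta_2)\big)$ for every $n\in\mathbb{N}^*$. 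The non-degeneracy hypotheses \eqref{additional constraint} or \eqref{additional constraint2} supplemented by some $\mathbf{(Hk\kappa)}$ were designed precisely so that $c_{\mathbf{m}}^{\kappa}\neq c_{\mathbf{m}n}^{\pm}$ for every $n\geqslant 2$, which rules out singularity of $M_{\mathbf{m}n}$ for $n\geqslant 2$. At $n=1$, the matrix $M_{\mathbf{m}}(c_{\mathbf{m}}^{\kappa},\theta_1,\theta_2)$ is singular by construction of $c_{\mathbf{m}}^{\kappa}$ and not identically zero, so its kernel is one-dimensional; a direct inspection of its first row yields the generator $u_{0}$ announced in \eqref{ker2}.

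For (iii), my plan is to characterize the range as an orthogonal of a one-dimensional line and then compute the relevant pairing explicitly. Equipping $Y_{\mathbf{m}}^{\alpha}\times Y_{\mathbf{m}}^{\alpha}$ with the natural $\ell^{2}$-type scalar product $(\cdot|\cdot)_{2}$, the Fredholmness with zero index and the one-dimensional kernel force $\textnormal{Im}\big(d_{(f_1,f_2)}\mathscr{G}(c_{\mathbf{m}}^{\kappa},0,0)\big)=\mathtt{span}^{\perp_{(\cdot|\cdot)_{2}}}(g_{0})$, where $g_{0}(\varphi)=v_{\mathbf{m}}^{\kappa}(\theta_1,\theta_2)\sin(\mathbf{m}\varphi)$ and $v_{\mathbf{m}}^{\kappa}$ spans $\ker\big(M_{\mathbf{m}}^{\top}(c_{\mathbf{m}}^{\kappa},\theta_1,\theta_2)\big)$. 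Using \eqref{dcF2}, the transversality condition then reduces to a finite-dimensional scalar pairing between the kernel vector of $M_{\mathbf{m}}$ and that of $M_{\mathbf{m}}^{\top}$, up to a factor $\mathbf{m}$. Expanding this pairing and exploiting $\det M_{\mathbf{m}}(c_{\mathbf{m}}^{\kappa},\theta_1,\theta_2)=0$, the result factors into a product of two explicit scalars.

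The main obstacle lies in showing the non-vanishing of each of these two factors. The first one reduces, via \eqref{btn}--\eqref{gamn} and \eqref{link omg NSC}, to $\tfrac{2\omega_{C}-\omega_{N}-\omega_{S}}{2\mathbf{m}}+\tfrac{\omega_{S}}{2\sin^{2}(\theta_2/2)}+\tfrac{\omega_{N}}{2\cos^{2}(\theta_1/2)}$, which is non-zero for $\mathbf{m}$ large enough under \eqref{additional constraint} or \eqref{additional constraint2}--\eqref{link omg NSC}. The second factor is $-c_{\mathbf{m}}^{\kappa}+\tfrac{\omega_{C}-\omega_{S}}{2\mathbf{m}}+\tfrac{\omega_{S}}{2\sin^{2}(\theta_2/2)}+\widetilde{\gamma}$; assuming for contradiction that it vanishes and squaring the resulting identity, one matches it against the explicit form of $\Delta_{\mathbf{m}}(\theta_1,\theta_2)$ and reaches the relation $\tfrac{1}{\mathbf{m}^2}(\omega_{N}-\omega_{C})(\omega_{C}-\omega_{S})\tan^{2\mathbf{m}}(\theta_1/2)\cot^{2\mathbf{m}}(\theta_2/2)=0$, which is impossible since $\omega_{N}\neq\omega_{C}$, $\omega_{C}\neq\omega_{S}$ and $\tan(\theta_1/2)\cot(\theta_2/2)\in(0,1)$ by \eqref{choice th1-th2}. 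Once both non-vanishings are established, \eqref{trans2} follows and Theorem \ref{CR} delivers the branch asserted in Theorem \ref{thm bif vcap2}.
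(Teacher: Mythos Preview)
Your proposal does not address the statement in question. The statement you were asked to prove is the abstract Crandall--Rabinowitz bifurcation theorem (Theorem~\ref{CR}), a general result about functionals $\mathscr{F}:\mathbb{R}\times V\to Y$ between Banach spaces. The paper does not prove this theorem at all; it merely states it in the appendix and refers to the original source \cite{CR71} for the proof. What you have written is instead a proof of Proposition~\ref{prop hyp CR2}: you are verifying that the specific functional $\mathscr{G}$ from the two-interface vortex cap problem satisfies the hypotheses of Theorem~\ref{CR}, and then invoking Theorem~\ref{CR} as a black box in your final sentence.

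For what it is worth, your argument for Proposition~\ref{prop hyp CR2} is essentially correct and follows the paper's own proof of that proposition quite closely: the Fredholmness via $c_{\mathbf{m}}^{\kappa}\notin\mathbb{L}$, the mode-by-mode kernel analysis using the non-collision conditions, the characterization of the range via $\ker(M_{\mathbf{m}}^{\top})$, and the factorization of the transversality pairing using $\det M_{\mathbf{m}}=0$ are all exactly as in the paper. But none of this constitutes a proof of the Crandall--Rabinowitz theorem itself, which would require a Lyapunov--Schmidt reduction and an application of the implicit function theorem in the abstract setting.
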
	
	
	\subsection{Proof of Lemma \ref{lem sat vort}}\label{proof lemma}
	
	Fix $\alpha\in\mathbb{R}.$ Since $\mathcal{R}(\alpha)\in SO_3(\mathbb{R}),$ then it preserves the Euclidean norm $|\cdot|_{\mathbb{R}^3},$ i.e.
	$$\forall\xi\in\mathbb{R}^3,\quad\big|\mathcal{R}(\alpha)\xi\big|_{\mathbb{R}^3}=|\xi|_{\mathbb{R}^3}.$$
	As a consequence,
	\begin{align*}
		\forall(\xi,\xi')\in(\mathbb{S}^2)^2,\quad\forall\alpha\in\mathbb{R},\quad G\big(\mathcal{R}(\alpha)\xi,\mathcal{R}(\alpha)\xi'\big)&=\frac{1}{2\pi}\log\left(\frac{\big|\mathcal{R}(\alpha)\xi-\mathcal{R}(\alpha)\xi'\big|_{\mathbb{R}^3}}{2}\right)\\
		&=\frac{1}{2\pi}\log\left(\frac{\big|\mathcal{R}(\alpha)(\xi-\xi')\big|_{\mathbb{R}^3}}{2}\right)\\
		&=\frac{1}{2\pi}\log\left(\frac{|\xi-\xi'|_{\mathbb{R}^3}}{2}\right)\\
		&=G(\xi,\xi').
	\end{align*}
	Hence, using the change of variables $\xi'\mapsto\mathcal{R}(\alpha)\xi'\in SO(\mathbb{R}^3)$ (which preserves $\mathbb{S}^2$), we get for any $\xi\in\mathbb{S}^2,$
	\begin{align}\label{inv rot Psi}
		\Psi\big(\mathcal{R}(\alpha)\xi\big)&=\int_{\mathbb{S}^2}G\big(\mathcal{R}(\alpha)\xi,\xi'\big)\Omega(\xi')d\xi'\nonumber\\
		&=\int_{\mathbb{S}^2}G\big(\mathcal{R}(\alpha)\xi,\mathcal{R}(\alpha)\xi'\big)\Omega\big(\mathcal{R}(\alpha)\xi'\big)d\xi'\nonumber\\
		&=\int_{\mathbb{S}^2}G(\xi,\xi')\Omega(\xi')d\xi'\nonumber\\
		&=\Psi(\xi).
	\end{align}
	This achieves the proof of Lemma \ref{lem sat vort}.
	
	\bibliography{references}	

\renewcommand{\MR}[1]{}\def\cprime{$'$}
\begin{thebibliography}{10}

\bibitem{ADPM21}
Weiwei Ao, Juan D\'{a}vila, Manuel del Pino, Monica Musso, and Juncheng Wei.
\newblock Travelling and rotating solutions to the generalized inviscid surface
  quasi-geostrophic equation.
\newblock {\em Transactions of the American Mathematical Society},
  374(9):6665--6689, 2021.

\bibitem{AH12}
Kendall Atkinson and Weimin Han.
\newblock {\em Spherical harmonics and approximations on the unit sphere: an
  introduction}, volume 2044 of {\em Lecture Notes in Mathematics}.
\newblock Springer, Heidelberg, 2012.

\bibitem{BHM22}
Massimiliano Berti, Zineb Hassainia, and Nader Masmoudi.
\newblock Time quasi-periodic vortex patches of euler equation in the plane.
\newblock {\em Inventiones mathematicae}, 233(3):1279--1391, 2023.

\bibitem{BD15}
Stefanella Boatto and David~G. Dritschel.
\newblock The motion of point vortices on closed surfaces.
\newblock {\em Proceedings of the Royal Society A - Mathematical, Physical and
  Engineering Sciences}, 471(2176):20140890, 25, 2015.

\bibitem{B82}
Jacob Burbea.
\newblock Motions of vortex patches.
\newblock {\em Letters in Mathematical Physics}, 6(1):1--16, 1982.

\bibitem{CLZ21}
Daomin Cao, Shanfa Lai, and Weicheng Zhan.
\newblock Traveling vortex pairs for 2{D} incompressible {E}uler equations.
\newblock {\em Calculus of Variations and Partial Differential Equations},
  60(5):Paper No. 190, 16, 2021.

\bibitem{CLW14}
Daomin Cao, Zhongyuan Liu, and Juncheng Wei.
\newblock Regularization of point vortices pairs for the {E}uler equation in
  dimension two.
\newblock {\em Archive for Rational Mechanics and Analysis}, 212(1):179--217,
  2014.

\bibitem{CQZZ21}
Daomin Cao, Guolin Qin, Weicheng Zhan, and Changjun Zou.
\newblock Existence and regularity of co-rotating and traveling-wave vortex
  solutions for the generalized {SQG} equation.
\newblock {\em Journal of Differential Equations}, 299:429--462, 2021.

\bibitem{CQZZ22}
Daomin Cao, Guolin Qin, Weicheng Zhan, and Changjun Zou.
\newblock On the global classical solutions for the generalized {SQG} equation.
\newblock {\em Journal of Functional Analysis}, 283(2):Paper No. 109503, 37,
  2022.

\bibitem{CW22}
Daomin Cao and Jie Wan.
\newblock Multiscale steady vortex patches for 2{D} incompressible {E}uler
  equations.
\newblock {\em SIAM Journal on Mathematical Analysis}, 54(2):1488--1514, 2022.

\bibitem{CWWZ21}
Daomin Cao, Jie Wan, Guodong Wang, and Weicheng Zhan.
\newblock Rotating vortex patches for the planar {E}uler equations in a disk.
\newblock {\em Journal of Differential Equations}, 275:509--532, 2021.

\bibitem{CWZ23}
Daomin Cao, Guodong Wang, and Bijun Zuo.
\newblock Stability of degree-2 rossby-haurwitz waves.
\newblock {\em arXiv preprint arXiv:2305.03279}, 2023.

\bibitem{CM88}
Silvia Caprino and Carlo Marchioro.
\newblock On nonlinear stability of stationary {E}uler flows on a rotating
  sphere.
\newblock {\em Journal of Mathematical Analysis and Applications},
  129(1):24--36, 1988.

\bibitem{CCGS16}
Angel Castro, Diego C\'{o}rdoba, and Javier G\'{o}mez-Serrano.
\newblock Existence and regularity of rotating global solutions for the
  generalized surface quasi-geostrophic equations.
\newblock {\em Duke Mathematical Journal}, 165(5):935--984, 2016.

\bibitem{CCG16}
Angel Castro, Diego C\'{o}rdoba, and Javier G\'{o}mez-Serrano.
\newblock Uniformly rotating analytic global patch solutions for active
  scalars.
\newblock {\em Annals of PDE}, 2(1):Art. 1, 34, 2016.

\bibitem{CCGS19}
Angel Castro, Diego C\'{o}rdoba, and Javier G\'{o}mez-Serrano.
\newblock Uniformly rotating smooth solutions for the incompressible 2{D}
  {E}uler equations.
\newblock {\em Archive for Rational Mechanics and Analysis}, 231(2):719--785,
  2019.

\bibitem{CCGS20}
Angel Castro, Diego C\'{o}rdoba, and Javier G\'{o}mez-Serrano.
\newblock Global smooth solutions for the inviscid {SQG} equation.
\newblock {\em Memoirs of the American Mathematical Society}, 266(1292):v+89,
  2020.

\bibitem{CR21}
Christophe Cheverry and Nicolas Raymond.
\newblock {\em A guide to spectral theory Applications and exercises}.
\newblock Birkh\"{a}user/Springer, Cham, 2021.

\bibitem{CG22}
Adrian Constantin and Pierre Germain.
\newblock Stratospheric planetary flows from the perspective of the {E}uler
  equation on a rotating sphere.
\newblock {\em Archive for Rational Mechanics and Analysis}, 245(1):587--644,
  2022.

\bibitem{CR71}
Michael~G. Crandall and Paul~H. Rabinowitz.
\newblock Bifurcation from simple eigenvalues.
\newblock {\em Journal of Functional Analysis}, 8:321--340, 1971.

\bibitem{DPMW20}
Juan Davila, Manuel Del~Pino, Monica Musso, and Juncheng Wei.
\newblock Gluing methods for vortex dynamics in {E}uler flows.
\newblock {\em Archive for Rational Mechanics and Analysis}, 235(3):1467--1530,
  2020.

\bibitem{DPMW22}
Juan Davila, Manuel del Pino, Monica Musso, and Juncheng Wei.
\newblock Leapfrogging vortex rings for the 3-dimensional incompressible euler
  equations.
\newblock {\em Communications on Pure and Applied Mathematics}, 77:3843--3957,
  2024.

\bibitem{HHH16}
Francisco de~la Hoz, Zineb Hassainia, and Taoufik Hmidi.
\newblock Doubly connected {V}-states for the generalized surface
  quasi-geostrophic equations.
\newblock {\em Archive for Rational Mechanics and Analysis}, 220(3):1209--1281,
  2016.

\bibitem{HHHM15}
Francisco de~la Hoz, Zineb Hassainia, Taoufik Hmidi, and Joan Mateu.
\newblock An analytical and numerical study of steady patches in the disc.
\newblock {\em Analysis \& PDE}, 9(7):1609--1670, 2016.

\bibitem{HHMV16}
Francisco de~la Hoz, Taoufik Hmidi, Joan Mateu, and Joan Verdera.
\newblock Doubly connected {$V$}-states for the planar {E}uler equations.
\newblock {\em SIAM Journal on Mathematical Analysis}, 48(3):1892--1928, 2016.

\bibitem{DZ78}
Gary~S. Deem and Norman~J. Zabusky.
\newblock Vortex waves: Stationary ``${V}$ states,'' interactions, recurrence,
  and breaking.
\newblock {\em Physical Review Letters}, 40:859--862, Mar 1978.

\bibitem{DHR19}
David~G. Dritschel, Taoufik Hmidi, and Coralie Renault.
\newblock Imperfect bifurcation for the quasi-geostrophic shallow-water
  equations.
\newblock {\em Archive for Rational Mechanics and Analysis}, 231(3):1853--1915,
  2019.

\bibitem{DP92}
David~G. Dritschel and Lorenzo~M. Polvani.
\newblock The roll-up of vorticity strips on the surface of a sphere.
\newblock {\em Journal of Fluid Mechanics}, 234:47--69, 1992.

\bibitem{DP93}
David~G. Dritschel and Lorenzo~M. Polvani.
\newblock Wave and vortex dynamics on the surface of a sphere.
\newblock {\em Journal of Fluid Mechanics}, 255:35--64, 1993.

\bibitem{F00}
Ludwig~E. Fraenkel.
\newblock {\em An introduction to maximum principles and symmetry in elliptic
  problems}, volume 128 of {\em Cambridge Tracts in Mathematics}.
\newblock Cambridge University Press, Cambridge, 2000.

\bibitem{G20}
Claudia Garc\'ia.
\newblock K\'{a}rm\'{a}n vortex street in incompressible fluid models.
\newblock {\em Nonlinearity}, 33(4):1625--1676, 2020.

\bibitem{G21}
Claudia Garc\'ia.
\newblock Vortex patches choreography for active scalar equations.
\newblock {\em Journal of Nonlinear Science}, 31(5):Paper No. 75, 31, 2021.

\bibitem{GH22}
Claudia Garc\'ia and Susanna~V. Haziot.
\newblock Global bifurcation for corotating and counter-rotating vortex pairs.
\newblock {\em Communications in Mathematical Physics}, 402:1167--1204, 2023.

\bibitem{GHM22-1}
Claudia Garc\'ia, Taoufik Hmidi, and Joan Mateu.
\newblock Time {P}eriodic {S}olutions for 3{D} {Q}uasi-{G}eostrophic {M}odel.
\newblock {\em Communications in Mathematical Physics}, 390(2):617--756, 2022.

\bibitem{GHM22}
Claudia Garc\'ia, Taoufik Hmidi, and Joan Mateu.
\newblock Time periodic doubly connected solutions for the 3{D}
  quasi-geostrophic model.
\newblock {\em SIAM Journal on Mathematical Analysis}, 55(6):6133--6193, 2023.

\bibitem{GHM23}
Claudia Garc\'ia, Taoufik Hmidi, and Joan Mateu.
\newblock Time periodic solutions close to localized radial monotone profiles
  for the 2{D} euler equations,.
\newblock {\em Annals of PDE}, 10(1), 2024.

\bibitem{GHS20}
Claudia Garc\'ia, Taoufik Hmidi, and Juan Soler.
\newblock Non uniform rotating vortices and periodic orbits for the
  two-dimensional {E}uler equations.
\newblock {\em Archive for Rational Mechanics and Analysis}, 238(2):929--1085,
  2020.

\bibitem{GCGS20}
Ludovic Godard-Cadillac, Philippe Gravejat, and Didier Smets.
\newblock Co-rotating vortices with n fold symmetry for the inviscid surface
  quasi-geostrophic equation.
\newblock {\em Indiana University Mathematics Journal}, 2020.

\bibitem{GSIP23}
Javier G\'omez~Serrano, Alexandru~D. Ionescu, and Jaemin Park.
\newblock Quasiperiodic solutions of the generalized {SQG} equation.
\newblock {\em arXiv preprint arXiv:2303.03992.}, 2023.

\bibitem{GSPS:2021}
Javier G\'omez-Serrano, Jaemin Park, and Jia Shi.
\newblock Existence of non-trivial non-concentrated compactly supported
  stationary solutions of the 2d euler equation with finite energy.
\newblock {\em arXiv preprint arXiv:2112.03821}, 2021.

\bibitem{GSPSY-rigidity}
Javier G\'{o}mez-Serrano, Jaemin Park, Jia Shi, and Yao Yao.
\newblock Symmetry in stationary and uniformly rotating solutions of active
  scalar equations.
\newblock {\em Duke Mathematical Journal}, 170(13):2957--3038, 2021.

\bibitem{GR15}
I.~S. Gradshteyn and I.~M. Ryzhik.
\newblock {\em Table of integrals, series, and products}.
\newblock Elsevier/Academic Press, Amsterdam, eighth edition, 2015.
\newblock Translated from the Russian, Translation edited and with a preface by
  Daniel Zwillinger and Victor Moll, Revised from the seventh edition
  [MR2360010].

\bibitem{HH13}
Gregory~J. Hakim and James~R. Holton.
\newblock {\em An introduction to dynamic meteorology}.
\newblock International Geophysics Series. Elsevier Academic Press, Burlington,
  MA, 4 edition, 2004.

\bibitem{HH15}
Zineb Hassainia and Taoufik Hmidi.
\newblock On the {V}-states for the generalized quasi-geostrophic equations.
\newblock {\em Communications in Mathematical Physics}, 337(1):321--377, 2015.

\bibitem{HH21}
Zineb Hassainia and Taoufik Hmidi.
\newblock Steady asymmetric vortex pairs for {E}uler equations.
\newblock {\em Discrete and Continuous Dynamical Systems. Series A},
  41(4):1939--1969, 2021.

\bibitem{HHM21}
Zineb Hassainia, Taoufik Hmidi, and Nader Masmoudi.
\newblock Kam theory for active scalar equations.
\newblock {\em To appear in Memoirs of the American Mathematical Society,
  available at arXiv:2110.08615}, 2021.

\bibitem{HHR23}
Zineb Hassainia, Taoufik Hmidi, and Emeric Roulley.
\newblock Invariant kam tori around annular vortex patches for 2{D} {E}uler
  equations.
\newblock {\em Communications in Mathematical Physics}, 405(270):1--127, 2024.

\bibitem{HMW20}
Zineb Hassainia, Nader Masmoudi, and Miles~H. Wheeler.
\newblock Global bifurcation of rotating vortex patches.
\newblock {\em Communications on Pure and Applied Mathematics},
  73(9):1933--1980, 2020.

\bibitem{HR22}
Zineb Hassainia and Emeric Roulley.
\newblock Boundary effects on the emergence of quasi-periodic solutions for
  euler equations.
\newblock {\em arXiv preprint arXiv:2202.10053}, 2022.

\bibitem{HW22}
Zineb Hassainia and Miles~H. Wheeler.
\newblock Multipole vortex patch equilibria for active scalar equations.
\newblock {\em SIAM Journal on Mathematical Analysis}, 54(6):6054--6095, 2022.

\bibitem{H15}
Taoufik Hmidi.
\newblock On the trivial solutions for the rotating patch model.
\newblock {\em Journal of Evolution Equations}, 15(4):801--816, 2015.

\bibitem{HM16-1}
Taoufik Hmidi and Joan Mateu.
\newblock Bifurcation of rotating patches from {K}irchhoff vortices.
\newblock {\em Discrete and Continuous Dynamical Systems. Series A},
  36(10):5401--5422, 2016.

\bibitem{HM16}
Taoufik Hmidi and Joan Mateu.
\newblock Degenerate bifurcation of the rotating patches.
\newblock {\em Advances in Mathematics}, 302:799--850, 2016.

\bibitem{HM17}
Taoufik Hmidi and Joan Mateu.
\newblock Existence of corotating and counter-rotating vortex pairs for active
  scalar equations.
\newblock {\em Communications in Mathematical Physics}, 350(2):699--747, 2017.

\bibitem{HMV13}
Taoufik Hmidi, Joan Mateu, and Joan Verdera.
\newblock Boundary regularity of rotating vortex patches.
\newblock {\em Archive for Rational Mechanics and Analysis}, 209(1):171--208,
  2013.

\bibitem{HMV15}
Taoufik Hmidi, Joan Mateu, and Joan Verdera.
\newblock On rotating doubly connected vortices.
\newblock {\em Journal of Differential Equations}, 258(4):1395--1429, 2015.

\bibitem{HR21}
Taoufik Hmidi and Emeric Roulley.
\newblock Time quasi-periodic vortex patches for quasi-geostrophic
  shallow-water equations.
\newblock {\em To appear in M{\'e}moires de la Soci{\'e}t{\'e} Math{\'e}matique
  de France, availble at arXiv:2110.13751}, 2021.

\bibitem{HXX22}
Taoufik Hmidi, Liutang Xue, and Zhi Xue.
\newblock Emergence of time periodic solutions for the generalized surface
  quasi-geostrophic equation in the disc.
\newblock {\em arXiv preprint arXiv:2210.08760}, 2022.

\bibitem{K18}
Sun-Chul Kim.
\newblock A free-boundary problem for {E}uler flows with constant vorticity on
  the sphere.
\newblock {\em Journal of Mathematical Analysis and Applications},
  465(1):703--711, 2018.

\bibitem{KSS18}
Sun-Chul Kim, Takashi Sakajo, and Sung-Ik Sohn.
\newblock Stability of barotropic vortex strip on a rotating sphere.
\newblock {\em Proceedings A}, 474(2210):20170883, 25, 2018.

\bibitem{KS21}
Sun-Chul Kim and Sung-Ik Sohn.
\newblock Linear stability and nonlinear evolution of a polar vortex cap on a
  rotating sphere.
\newblock {\em European Journal of Mechanics. B. Fluids}, 85:102--109, 2021.

\bibitem{K74}
Gustav~R. Kirchhoff.
\newblock {\em Vorlesungen uber mathematische Physik. Mechanik.}
\newblock Teubner, Leipzig, 1876.

\bibitem{N22}
Marc Nualart.
\newblock On zonal steady solutions to the 2{D} {E}uler equations on the
  rotating unit sphere.
\newblock {\em Nonlinearity}, 36(9):4981--5006, 2023.

\bibitem{R21}
Emeric Roulley.
\newblock Vortex rigid motion in quasi-geostrophic shallow-water equations.
\newblock {\em Asymptotic Analysis}, pages 1--50, 2022.

\bibitem{R22}
Emeric Roulley.
\newblock Periodic and quasi-periodic {E}uler-$\alpha$ flows close to {R}ankine
  vortices.
\newblock {\em Dynamics of Partial Differential Equations}, 20(4):311--366,
  2023.

\bibitem{S17}
Yuri~N. Skiba.
\newblock {\em Mathematical problems of the dynamics of incompressible fluid on
  a rotating sphere}.
\newblock Springer, Cham, 2017.

\bibitem{T16}
Michael Taylor.
\newblock Euler equation on a rotating surface.
\newblock {\em Journal of Functional Analysis}, 270(10):3884--3945, 2016.

\bibitem{T85}
Bruce Turkington.
\newblock Corotating steady vortex flows with n-fold symmetry.
\newblock {\em Nonlinear Analysis. Theory, Methods \& Applications},
  9:351--369, 1985.

\bibitem{WXZ22}
Yuchen Wang, Xin Xu, and Maolin Zhou.
\newblock Degenerate bifurcation of two-fold doubly-connected vortex patches.
\newblock {\em arXiv preprint arXiv:2212.01869}, 2022.

\bibitem{Y63}
Victor~I Yudovich.
\newblock Non-stationary flow of an ideal incompressible liquid.
\newblock {\em USSR Computational Mathematics and Mathematical Physics},
  3(6):1407--1456, 1963.

\end{thebibliography}
	\bibliographystyle{plain}
	
	\noindent Departamento de Matem\'atica Aplicada \& Research Unit ``Modeling Nature'' (MNat), Facultad de Ciencias, Universidad de Granada, 18071 Granada, Spain.\\
	Email address : claudiagarcia@ugr.es.\\
	
	\noindent NYUAD Research Institute, New York University Abu Dhabi, PO Box 129188, Abu Dhabi, United Arab Emirates.\\
	Email address : zh14@nyu.edu.\\
	
	\noindent SISSA International School for Advanced Studies, Via Bonomea 265, 34136, Trieste, Italy.\\
	E-mail address : eroulley@sissa.it.
\end{document}